\documentclass[12pt,oneside,reqno,fleqn]{amsart}
\usepackage{amssymb}
\usepackage{bbm}
\usepackage{cases}
\usepackage{graphicx}
\usepackage{mathrsfs}
\usepackage{stmaryrd}
\usepackage{color}
\usepackage{soul}
\usepackage[dvipsnames]{xcolor}
\usepackage{amsfonts}
\usepackage{amsthm}
\usepackage{libertine}
\usepackage[T1]{fontenc}
\usepackage[libertine,varbb]{newtxmath}
\usepackage[colorlinks,linkcolor=Red,citecolor=Red,urlcolor=Red]{hyperref}
\usepackage[shortlabels] {enumitem}
\usepackage[nameinlink,capitalize]{cleveref}
\usepackage[spacing=true,kerning=true,tracking=true,letterspace=50]{microtype}
\usepackage{orcidlink}
\usepackage{ulem}
\usepackage[]{todonotes}
\hypersetup{pdftitle={Quantitative approximation of stochastic kinetic equations: from discrete to continuum}, pdfauthor={Zimo Hao, Khoa L\^e and Chengcheng Ling}}
\numberwithin{equation}{section}
\newcommand{\be}{\begin{eqnarray}}
\newcommand{\ee}{\end{eqnarray}}
\newcommand{\ce}{\begin{eqnarray*}}
\newcommand{\de}{\end{eqnarray*}}
\newtheorem{theorem}{Theorem}[section]
\newtheorem{lemma}[theorem]{Lemma}
\newtheorem{proposition}[theorem]{Proposition}
\newtheorem{corollary}[theorem]{Corollary}
\theoremstyle{remark}
\newtheorem{assumption}[theorem]{Assumption}

\newtheorem{example}[theorem]{Example}
\newtheorem{remark}[theorem]{Remark}
\newtheorem{definition}[theorem]{Definition}

\crefname{eqn}{Equation}{Equations}
\crefname{assumption}{Assumption}{Assumptions}
\crefname{innercustomthm}{Condition}{Conditions}
\crefrangelabelformat{innercustomthm}{#3#1#4-#5#2#6}

\def\bbk{{\boldsymbol{k}}}
\def\bbp{{\boldsymbol{p}}}
\def\bbr{{\boldsymbol{r}}}
\def\bbq{{\boldsymbol{q}}}
\def\bba{{\boldsymbol{a}}}

\def\bb2{{\boldsymbol{2}}}
\def\no{\nonumber}
\def\={&\!\!=\!\!&}

\def\e{{\mathrm{e}}}
\def\eps{\varepsilon}

\def\p{\partial}

\def\<{{\langle}}
\def\>{{\rangle}}
\def\({{\Big(}}
\def\){{\Big)}}

\def\bx{{\mathbf{x}}}

\def\sgn{\mbox{\rm sgn}}

\def\dif{d}

\def\min{{\mathord{{\rm min}}}}

\def\no{\nonumber}
\def\={&\!\!=\!\!&}
\def\bt{\begin{theorem}}
\def\et{\end{theorem}}
\def\bl{\begin{lemma}}
\def\el{\end{lemma}}
\def\br{\begin{remark}}
\def\er{\end{remark}}
\def\bd{\begin{definition}}
\def\ed{\end{definition}}
\def\bp{\begin{proposition}}
\def\ep{\end{proposition}}
\def\bc{\begin{corollary}}
\def\ec{\end{corollary}}
\def\bx{\begin{example}}
\def\ex{\end{example}}

\def\cC{{\mathcal C}}
\def\cD{{\mathcal D}}

\def\cR{{\mathcal R}}
\def\cS{{\mathcal S}}

\def\mB{{\mathbb B}}

\def\mE{{\mathbb E}}
\def\E{\mE}

\def\mL{{\mathbb L}}

\def\mN{{\mathbb N}}

\def\mP{{\mathbb P}}
\def\mQ{{\mathbb Q}}
\def\mR{{\mathbb R}}

\def\sF{{\mathscr F}}

\def\sI{{\mathscr I}}

\def\bC{{\mathbb C}}
\def\bB{{\mathbb B}}

\def\geq{\geqslant}
\def\leq{\leqslant}

\def\DD{{b}}

\newcommand{\R}{{\mathbb R}}
\newcommand{\smooth}{C^\infty_c}
\newcommand{\Rd}{{\R^d}}

\newcommand{\tand}{\quad\text{and}\quad}

\newcommand{\tif}{\quad\text{if}\quad}
\newcommand{\twhere}{\quad\text{where}\quad}
\newcommand{\LL}{{\mathbb{L}}}

\newcommand{\1}{{\mathbf 1}}

\newcommand{\norm}[1]{{\left\vert\kern-0.25ex\left\vert\kern-0.25ex\left\vert #1 
    \right\vert\kern-0.25ex\right\vert\kern-0.25ex\right\vert}}
\DeclareMathOperator*{\esssup}{ess\,sup}
\renewcommand{\le}{\leq}
\renewcommand{\ge}{\geq}
\allowdisplaybreaks
\begin{document}
	\title{Weak approximation of kinetic SDEs: closing the criticality gap}
	\date{\today}
	
\author{Zimo Hao, Khoa L{\^e} 
\and Chengcheng Ling 
}
 
\address{Universit\"at Bielefeld, Fakult\"at f\"ur Mathematik, 33615 Bielefeld, Germany}
\email{zhao@math.uni-bielefeld.de}
\address{
University of Leeds, School of Maths, 
Leeds, LS2 9JT, UK.}
\email{ K.Le@leeds.ac.uk}
\address{ University of Augsburg, Institut f\"ur Mathematik, 
86159 Augsburg,  Germany.}
\email{ chengcheng.ling@uni-a.de}

	\begin{abstract}
We study the weak convergence of a generic tamed Euler-Maruyama  scheme for the kinetic stochastic differential equations (SDEs) 
with integrable drifts. We show that the marginal density of the considered scheme converges at rate $1/2$ to the corresponding marginal density of the SDE. The convergence rate is independent from the criticality gap, which is new compared to previous results.
		
		\bigskip
		
		\noindent {{\sc Mathematics Subject Classification (2020):}
		Primary 60H35, 
          65C30; 
		Secondary
        60H10, 
		60H50. 
		}

		\noindent{{\sc Keywords:} Singular SDEs; degenerate noise; weak approximation;  kinetic SDEs; Second order SDEs; (tamed-)Euler-Maruyama scheme; regularization by noise}
	\end{abstract}
	
	\maketitle
\section{Introduction}
Let $d\ge1$ be an integer dimension, $\xi,\eta$ be vectors in $\Rd$ and $W$ be a $d$-dimensional standard Brownian motion on a filtered probability space  $(\Omega,\sF,(\sF_s)_{s\ge0},\mP)$.
We consider the second order stochastic differential system 
\begin{align}\label{eq:SDE}
\begin{cases}
\dif X_t=V_t\dif t,\\
\dif V_t=b(t,X_t,V_t)\dif t+\dif W_t,
\end{cases}
\quad (X_0,V_0)=(\xi,\eta)\in\Rd\times\Rd.
\end{align}
The drift $b :\mR_+\times\mR^{d}\times\mR^{d}\to\mR^d$ is a  measurable function which satisfies the following integrability condition. 
\begin{assumption}
\label{ass:main}There exists a constant $\bbp=(p_x,p_v)\in[2,\infty]^2$ such that
\begin{align}
   \bba\cdot\frac{d}{\bbp}:= 3\frac{d}{p_x}+\frac{d}{p_v}<1
\tand
   \|b\|_{L_T^\infty(\mL^{\bbp})}:= \sup_t \left(\int_{\R^{d}}\left(\int_{\Rd} |b(t,x,v)|^{p_x}dx\right)^{\frac{p_v}{p_x}}dv\right)^{\frac1{p_v}}<\infty. 
\end{align}
(The Lebesgue integration norm is replaced by $\sup$-norm when $p_x=\infty$ or $p_v=\infty$.)
\end{assumption}
System \eqref{eq:SDE} is one of the typical models that describes the Hamiltonian mechanics in the form of Langevin equation (\cite{S,T}). In such context,  $X_t,V_t$ usually represent respectively the position and velocity of a moving particle at a time $t$. 
The probability density $\rho_t(x,v)$  of $Z_t:=(X_t,V_t)$ (which exists under \cref{ass:main}, see \cite{RZ24}) is a common subject in statistical mechanics, and it satisfies the following Fokker–Planck equation (\cite{RE, V}) 
 \begin{align}\label{eq:PDE-intro}
    \partial_t\rho=\Delta_v\rho +v\cdot\nabla_x \rho+b\cdot\nabla_v\rho.
 \end{align}
 

Let $n\ge1$ be an integer. 
 We define $k_n(t):=\frac{\lfloor nt \rfloor}{n}$ for each $ t\in \mR_+$. 
 We  let $(b_n)_n$ be a sequence of bounded functions that converges to $b$.
Let $\{Z^n_t\}:=\{(X^n_t,V^n_t)\}$ be the solution to the following \textit{tamed Euler scheme}
\begin{align}\label{eq:SDE-EM}
\begin{cases}
X^n_t=\xi+\int_0^t V^n_s\dif s,\\
V^n_t=\eta+\int_0^t b_n(s,X^n_{k_n(s)}+(s-k_n(s))V^n_{k_n(s)},V^n_{k_n(s)})\dif s+W_t.
\end{cases}
\end{align}
The probability density of $Z^n_t$, which exists, is denoted by $\rho^n_t$. The goal of the current article is to obtain convergence of $\rho^n_t$ to $\rho_t$ with a rate that is independent from $d,p_x,p_v$. 
To be more precise, we assume that $(b_n)_n$ satisfies the following condition.
\begin{assumption}\label{ass.bn}
    There are finite constants   $\zeta\in[0,\frac12]$, $\vartheta>0$, $\delta\in(1,2)$ and $\kappa_b>0$ such that 
  \begin{align}\label{con:bnbdd}
&\sup_n\|b_n\|_{L_T^\infty(\mL^{\bbp})}\le \kappa_b,
\\& \sup_n n^{\vartheta\delta} \|b_n-b\|_{L^\infty((0,1],\mB^{-\delta,1}_{\bbp;\bba})}\le \kappa_b ,\label{con.bnrate}
\\&
    \sup_{n} n^{-\zeta}\sup_{(t,z)\in[0,n^{-1}]\times\R^{2d}}|b_n(t,z)|\le \kappa_b,\label{con.taming}
\end{align}
and 
\begin{align}
    \lim_{h\downarrow 0} \sup_n(n^{-1/2}\wedge h)\sup_{(t,z)\in[0,1]\times\R^{2d}}|b_n(t,z)|=0.\label{con.bngir}
\end{align}
\end{assumption}
In \eqref{con.bnrate}, $\mB^{-\delta,1}_{\bbp;\bba}$ is an anisotropic Besov space of distributions on $\R^{2d}$ with regularity index $-\delta$.
Precise definitions are provided later in \cref{sec:Notation-result}. Heuristically,  condition \eqref{con:bnbdd} asserts that $b_n$ belongs to the same function space as $b$ uniformly. The constant $\vartheta$ parametrizes and quantifies the approximating sequence $(b_n)_n$. Condition \eqref{con.bngir} is a technical one which is necessary for the application of Girsanov theorem.
\bt\label{thm-weak} Assume that \cref{ass:main,ass.bn} hold and initial data $(\xi,\eta)\in\Rd\times\Rd$ is given. 
Then for
  any $\bbq\in[2,\infty]^2$ with $\bbq\ge\bbp$, 
  there is a constant $C=C(d,\bbp,\bbq,\zeta,\vartheta,\delta,\kappa_b,\|b\|_{L_T^\infty(\LL^\bbp)})>0$ such that for all $t\in(0,1]$, 
     \begin{align}
    \label{est:thm-weak-S-1}
 \|\rho_t-\rho_t^n\|_{\bbq'}
 \lesssim_C n^{-(\vartheta\wedge\frac12)}t^{-(\zeta\vee\bba\cdot \frac{d}{2\bbp})-\bba\cdot\frac{d}{2\bbp}},
\end{align}
where $\bbq'=(\frac{q_x}{q_x-1},\frac{q_v}{q_v-1}) $ is the H\"older conjugate of $\bbq.$
\et
We note that the constant $C$ in \eqref{est:thm-weak-S-1} is independent from the initial position. 
The restriction on the time interval $[0,1]$ is conventional which simplifies our exposition. The results discussed herein can be extended straightforwardly to arbitrary time intervals. In such case, the implicit constant in \cref{thm-weak} would depend on the the length of the temporal interval.
We give two typical examples of $(b_n)_n$ which satisfies \cref{ass.bn}. 
The first one is given by the convolution
\begin{equation}\label{def.bnconvl}
    b_n=b*\phi_n, \quad \phi_n(x,v):=n^{4d\vartheta}\phi(n^{3\vartheta}x, n^{\vartheta}v),
\end{equation}
where $\phi$ is any  bounded  probability density function  with $\varphi(x,v)=\varphi(x,-v)$  and $\vartheta$ is any constant in $(0,\frac12(\bba\cdot \frac{d}{\bbp})^{-1}]$. 
When $((p_x\wedge p_v)-1)\bba\cdot\frac{d}{\bbp}>1$ or when $\bbp=(\infty,\infty)$,  the other example of taming sequence is obtained through the truncation 
\begin{align}
           b_n(t,x,v)&:=
            1_{|b(t,x,v)|>0}\frac{|b(t,x,v)|\wedge(C_2n^{\kappa})}{|b(t,x,v)|}b(t,x,v),\label{def.bncutoff}
        \end{align}
where  $\kappa$ is any constant in $(0,1/2)$. 
To obtain rate $1/2$ in \eqref{est:thm-weak-S-1}, it is sufficient to tune the parameters so that  $\vartheta>\frac{1}{2}$ in case of \eqref{def.bnconvl} and $\kappa>\frac{1}{2}(\bba\cdot\frac{d}{\bbp})$ in case of \eqref{def.bncutoff}, see details in \cref{sec.examples} below.

\subsection*{Discussion}
Comparing \eqref{eq:SDE-EM} with the standard Euler-Maruyama scheme 
\begin{align}\label{eq.standardEM}
\begin{cases}
X^n_t=\xi+\int_0^t V^n_{k_n(s)}\dif s,\\
V^n_t=\eta+\int_0^t b(s,Z^n_{k_n(s)})\dif s+W_t,
\end{cases}
\end{align}
we have replaced   $\int_0^tV_{k_n(s)}^nds$ and $ \int^t_0 b(s,Z^n_{k_n(s)})\dif s $ respectively by $\int_0^tV_s^nds$ and $ \int_0^t b_n(s,X^n_{k_n(s)}+(s-k_n(s)V^n_{k_n(s)},V^n_{k_n(s)})\dif s$. 
This replacement is more favorable due to the several reasons. 

First, even though $\int_0^tV^n_sds$ has continuum variable, the scheme \eqref{eq:SDE-EM} is still recursive. Indeed, let $h:=1/n$, for any $k\in\mN$, we obtain from \eqref{eq:SDE-EM} that
\begin{align*}
     X_{(k+1)h}^n&=X_{kh}^n+\int_{kh}^{(k+1)h}V_{s}^n\dif s,
     \\
     V_s^n&=V_{kh}^n+\int_{kh}^s b^n(r,X_{kh}^n+(r-kh)V_{kh}^n,V_{kh}^n)\dif r +W_s-W_{kh},\ s\in[kh,(k+1)h].
\end{align*}
This implies the recursive relation
\begin{align}\label{eq.recursive}
\left(\begin{matrix} X_{(k+1)h}^n \\ V_{(k+1)h}^n \end{matrix}\right)=
F^n_h(X_{kh}^n,V_{kh}^n)+\xi^n_k,
\end{align}
where
\begin{align*}
    F^n_h(x,v):= \left(\begin{matrix} x+hv+\int_{0}^{h}(h-r) b^n(r,x+rv,v)\dif r \\ v+\int_{0}^{h}b^n(r,x+rv,v)\dif r \end{matrix}\right)
\end{align*}
and $\xi^n_k=\left(\begin{matrix} \int_{kh}^{(k+1)h}(W_s-W_{kh})\dif s\ \ W_{(k+1)h}-W_{kh} \end{matrix}\right)$.
We note that $\xi^n_k$ are i.i.d. Gaussian random variables with covariance matrix 
\begin{align*}
   \left(\begin{matrix}\frac{h^3}{3}\mathbb{I}_{d\times d} && \frac{h^2}{2}\mathbb{I}_{d\times d}
   \\ \frac{h^2}{2}\mathbb{I}_{d\times d} && h\mathbb{I}_{d\times d}
    \end{matrix}\right).
\end{align*}
Consequently, in the case of time-independent smooth drift $b$, the scheme \eqref{eq:SDE-EM} is completely implementable through \eqref{eq.recursive}. 

    Second,   we use tamed term $b_n$ instead of $b$ itself,  which is known as ``taming technique'' \cite{HJK, LL}, to aid the hardly controllable  case when $b$ has singularities. The taming of the drift turns out to be technically beneficial in analyzing convergence rates.

\smallskip
There are limited works on the study of the weak well-posedness of  \eqref{eq:SDE}. Sharp conditions given from  \cite[Theorem 1]{RM}  are either $b\in L^\infty_T(C_{x,v}^\beta)$ with $ \beta\in(0,1)$ or $b\in L^\infty_T(\mL^\infty)$ or $L_T^q(\mL^p)$ with $\frac{2}{q}+\frac{4d}{p}<1$ and $p\geq 2,q>2$.   \cite[Theorem 1.4]{RZ24} obtains, among other things, weak well-posedness for \eqref{eq:SDE} with unbounded singular drifts in Kato’s class,
 which includes  \cref{ass:main}.  
Considering the fact that there are few results on weak well-posedness, it seems that the current result  is the first work that quantifies the weak convergence rate of order $1/2$ for \eqref{eq:SDE} under \cref{ass:main}. The work \cite{LM} also studies the law of the discrete Euler--Maruyama equation but does not provide any quantitative bounds.  
        
When $b=b(t,v)\in \mL^{(p_x,p_v)}$ with $p_x=\infty$ and $p_v>d$, the kinetic SDE \eqref{eq:SDE} reduces to the  non-degenerate SDE $\dif V_t=b(t,V_t)\dif t+\dif W_t$. In this particular instance, we can compare \cref{thm-weak} with existing literature. 
In fact, the work \cite{JM21} considered an Euler scheme with randomized time variable in the approximating drift
 \begin{align}\label{11111}
     {b}_n(t,v):=1_{t>1/n,  |b(t,v)|>0}\frac{|b(t,v)|\wedge(Cn^{\kappa})}{|b(t,v)|}b(t,v), \quad \kappa=\frac d{p_v},
\end{align}
and established a pointwise convergence rate for $\rho^n$ of order $n^{-\frac{1 - d/p_v}{2}}$, which vanishes when $d/p_v$ approaches $1$. 
Due to technical reasons, \cite{JM21} requires that $b_n$ vanishes when $t<1/n$, which causes  \eqref{11111} to violate \eqref{con.bnrate}. 
In contrast, our method does not treat the initial time step differently and therefore allows the use of \eqref{def.bncutoff}. In this case, by  choosing $\kappa >\frac{1}{2}(\bba\cdot\frac{d}{\bbp})$, \cref{thm-weak} yields a convergence rate (with respect to a weaker topology) of order $n^{-\frac12}$,
which is independent from the criticality gap $1- d/{p_v}$. 
In the case when $p_v=\infty$, the criticality gap  incidentally vanishes, our result yields the convergence rate in the total variation distance, which is consistent with \cite{OJ}.
\cite{JM21} also considers the case  \eqref{11111} with $\kappa=1/2$, the choice \eqref{def.bncutoff} with $\kappa=1/2$ is ruled out by condition \eqref{con.bngir} (see \cref{rmk.girpara} for further details).  

\subsection*{Idea of the proof.} We briefly sketch the general arguments showing \eqref{est:thm-weak-S-1}. Via a duality identity (\cref{lem:app}), it suffices to estimate $|\mE \varphi(Z_t)-\mE \varphi(Z_t^n)|$, for each test function $\varphi$ satisfying $\|\varphi\|_{\mL^\bbq}=1$. Let $(P_t)_t$ be the free Markovian semigroup associated to \eqref{eq:SDE}.   Applying It\^o's formula to $r\to P_{t-r}\varphi(Z_r)$ and $P_{t-r}\varphi(Z^n_r)$, we have 
\begin{align*}
    |\mE \varphi(Z_t)-\mE \varphi(Z_t^n)|\le& \Big|\mE\int_0^t (b-b_n)(r,Z_r)\cdot \nabla_vP_{t-r}\varphi(Z_r)\dif r\Big|\no\\
    &+\Big|\mE\int_0^t b_n(r)\cdot \nabla_vP_{t-r}\varphi(Z_r)-b_n(r)\cdot \nabla_vP_{t-r}\varphi(Z_r^n)\dif r\Big|\no\\
    &+\Big|\mE\int_0^t \big(b_n(r,Z^n_r)-\Gamma_{r-k_n(r)}b_n(r,Z_{k_n(r)}^n)\big)\cdot \nabla_vP_{t-r}\varphi(Z_r^n)\dif r\Big|\no\\
    =:&I_1^n+I_2^n+I_3^n.
\end{align*}
The first term, $I_1^n$, encodes the taming error arising from approximating $b$ by $b_n$. The convergence rate of $I^1_n$ is therefore governed by \eqref{con.bnrate}. To utilize \eqref{con.bnrate}, one must effectively control the regularity of the product $ \nabla_vP_{t-s}\varphi\cdot (b-b_n)(s)$. Standard product estimates would yield unsatisfactory rates. We overcome this difficulty by using Bony's paraproduct decomposition, writing $\nabla_v P_{t-s}\varphi\prec(b_n-b)(s)+\nabla_v P_{t-s}\varphi\succcurlyeq(b_n-b)(s)$. Each term in Bony's decomposition possesses a different degree of regularity and integrability, and by treating them separately, we are able to apply  condition \eqref{con.bnrate} effectively (see \cref{lem:estIn2}). 
The second term, $I_2^n$, has a recursive structure and can be estimated directly in terms of the map $s\mapsto \|\rho_s-\rho^n_s\|_{\bbp'}$. By applying a Gr\"onwall argument, this recursive contribution becomes negligible. The last term, $I^n_3$, is a weighted quadrature error arising from the discretizations of the time steps. To control it, we use refined estimates related to the transition probabilities associated with $Z$ and $Z^n$, see \cref{lem.technical,lem:I3,lem:est-I3n}.  

The application of It\^o's formula leading to the decomposition into $I^n_1,I^n_2,I^n_3$ is already presented in literature. For instance,  it is used in \cite{Holland} in a slightly different context, where the test function $\varphi$ is H\"older continuous, which leads to convergence in a Wasserstein metric. It is also similar to the classical approach in \cite{talay1990expansion}.
Our analysis of each term is nevertheless novel, and new techniques have been introduced to handle the lack of regularity present in the coefficients and the test functions.

\subsection*{Organization of the paper}
Precise definitions, notations and examples are stated and discussed in \cref{sec:Notation-result}. \cref{sec.pre} collects some analytic estimates on the free Markov semigroup and exponential functionals. The proof of \cref{thm-weak} is provided in \cref{sec.proofs}.  \cref{app} collects several technical lemmas for calculations appeared in the main proofs. Paraproduct estimates in anisotropic Besov spaces are summarized in \cref{sec:3.2}.  
\subsection*{Conventions}
A vector $z$ in $\R^{2d}$ is often decomposed into two components $z=(x,v)\in\Rd\times\Rd$. We denote $\nabla_x=(\partial_{x_k})_{1\le k\le d}$, $\nabla_v=(\partial_{v_k})_{1\le k\le d}$ and $\nabla=(\nabla_x,\nabla_v)$. 



We use relations $<,=, \leq$ and  $>,\geq$ between vectors if each corresponding component shares the same relation, e.g. 
 $\bbq:=(q_i)_{i\geq 1}\leq \bbp:=(p_i)_{i\geq 1}$ if $q_i\leq p_i$ for each $i$.
For each $\bbp=(p_x,p_v)\in[1,\infty]^2$, we denote $\bbp^{-1}:=\frac{1}{\bbp}:=(\frac{1}{p_x},\frac{1}{p_v})$, with convention that $1/\infty=0$. 

The notation $a\lesssim b$ means that $a\leq C b$ for some finite non-negative constant $C$ which depends only on the parameters in the corresponding statement. If there are further dependence on another parameter $c$, we incorporate it in the notation by writing $a\lesssim_c b$.

\section{Notations and examples}\label{sec:Notation-result}
\subsection{Notations}
For each $p\in[1,\infty)$,  $\|\cdot \|_p$ denotes the classical Lebesgue norm on $\Rd$, while $\|\cdot\|_\infty$ denotes the supremum norm over $\Rd$, i.e. $\|f\|_\infty:=\sup_{x\in\mR^d}|f(x)|$. We note that this differs from the  usual $L^\infty$ Lebesgue norm defined via the essential supremum. This choice is purely for convenience since we  consider only  functions that are bounded everywhere.  

 
 For each vector index $\bbp=(p_x,p_v)\in[1,\infty]^2$, we define the mix $\mL^\bbp$ space as the collection of measurable functions $f$ on $\R^{2d}$ such that
 \begin{align*}
\|f\|_{\bbp}:=\left(\int_{\mR^d}\|f(\cdot,v)\|_{p_x}^{p_v}\dif v\right)^{1/p_v}<\infty \quad\text{if}\quad p_v<\infty,
\end{align*}
and 
\begin{align*}
\|f\|_{\bbp}:=\sup_{v\in\mR^d}\|f(\cdot,v)\|_{p_x}<\infty \quad\text{if} \quad p_v=\infty.
\end{align*}
When $\bbp=(\infty,\infty)$, we abbreviate $\|\cdot\|_{\bbp}=\|\cdot\|_\infty$.


For each $q\in[1,\infty]$ and $\bbp=(p_x,p_v)\in[1,\infty]^2$, $L^q_T(\mL^\bbp)$ denotes the space of measurable functions $f$ on $[0,1]\times\R^{2d}$ such that
\begin{align*}
    \|f\|_{L_T^q(\mL^\bbp)}:&=\Big(\int_0^T\|f(t,\cdot)\|_{\bbp}^{q}\dif t\Big)^{1/q}<\infty \tif q<\infty \quad \text{and}
    \\
    \|f\|_{L_T^\infty(\mL^\bbp)}:&=\sup_{t\in[0,1]}\|f(t,\cdot)\|_{\bbp}^{q}<\infty \tif q=\infty.
\end{align*}
\subsection*{Anisotropic Besov spaces} \label{sec.AS}
For a Lebesgue integrable function $f$ in $\mR^{2d}$, let $\hat f$ and $\check f$ respectively be the Fourier transform of $f$ and its inverse which are defined by
$$
\hat f(\xi):=(2\pi)^{-2d}\int_{\mR^{2d}} \e^{-{\rm i}\xi\cdot z}f(z)\dif z,
\quad \check f(\xi):=(2\pi)^{-2d}\int_{\mR^{2d}} \e^{{\rm i}\xi\cdot z}f(z)\dif z,\quad\xi\in\mR^{2d}.
$$
Put $\bba=(3,1)$ and define the anisotropic distance 
$$
|z-z'|_\bba:=|x- x'|^{1/3}+|v-v'|, \quad \forall z=(x,v),z'=(x',v')\in\mR^{d}\times\Rd.
$$
For each $r>0$ and $z\in\mR^{2d}$,  we denote
$
B^\bba_r(z):=\{z'\in\mR^{2d}:|z'-z|_\bba\leq r\}
$
and $ B^\bba_r:=B^\bba_r(0)$.
Let $\chi^\bba_0$ be  a 
$C^{\infty}_c$-function  on $\mR^{2d}$ which is symmetric in the direction of $x$ and $v$ with $\chi^\bba_0(\xi)=1$ if $\xi\in B^\bba_1$ and $\chi^\bba_0(\xi)=0$ if $\xi\notin B^\bba_2$.
We define
$$
\phi^\bba_j(\xi):=
\left\{
\begin{aligned}
&\chi^\bba_0(2^{-j\bba}\xi)-\chi^\bba_0(2^{-(j-1)\bba}\xi),\ \ &j\geq 1,\\
&\chi^\bba_0(\xi),\ \ &j=0,
\end{aligned}
\right.
$$
where for each $s\in\mR$ and $\xi=(\xi_1,\xi_2)\in\Rd\times\Rd$,
$
2^{s\bba }\xi=(2^{3s}\xi_1, 2^{s}\xi_2).
$
Note that
\begin{align*}
{\rm supp}(\phi^\bba_j)\subset\big\{\xi: 2^{j-1}\leq|\xi|_\bba\leq 2^{j+1}\big\},\ j\geq 1,\quad {\rm supp}(\phi^\bba_0)\subset B^\bba_2,
\end{align*}%
and
\begin{align}\label{AA13}
\sum_{j=0}^k\phi^\bba_j(\xi)=\chi_0^\bba(2^{-k\bba}\xi)\to 1,\, \text{as $k\to \infty$},\quad \forall\xi\in\mR^{2d}.
\end{align}

Let $\cS$ be the space of all Schwartz functions on $\mR^{2d}$ and $\cS'$ be its topological dual, i.e. the space of  tempered distributions.
For given $j\geq 0$, the  dyadic anisotropic block operator  $\mathcal{R}^\bba_j$ is defined on $\cS'$ by
\begin{align*}
\mathcal{R}^\bba_jf(z):=(\phi^\bba_j\hat{f})\check{\ }(z)=\check{\phi}^\bba_j*f(z),
\end{align*}%
where the convolution is understood in the distributional sense.
We conventionally put $\cR^\bba_i:=0$ for $i<0$.
We give a definition for anisotropic Besov spaces (cf. \cite[Chapter 5]{Tri06}).
\begin{definition}\label{bs}
Let $s\in\mR$, $q\in[1,\infty]$ and $\bbp=(p_x,p_v)\in[1,\infty]^2$. The  anisotropic Besov space is defined by 
$$
\bB^{s,q}_{\bbp;\bba}:=\left\{f\in \cS': \|f\|_{\bB^{s,q}_{\bbp;\bba}}
:= \left(\sum_{j\geq0}\big(2^{ js}\|\cR^\bba_{j}f\|_{\bbp}\big)^q\right)^{1/q}<\infty\right\}.
$$
When $q=\infty$, we write $\bB^s_{\bbp;\bba}:=\bB^{s,\infty}_{\bbp;\bba}$.
\end{definition}

For any $\bbp\in[1,\infty]^2$,  $s'>s$ and $q\in[1,\infty]$, it holds that (\cite[Appendix B]{HRZ})
\begin{align}\label{AB2}
\bB^{0,1}_{\bbp;\bba}\hookrightarrow\mL^\bbp\hookrightarrow\bB^{0,\infty}_{\bbp;\bba},\quad  \bB^{s',\infty}_{\bbp;\bba}\hookrightarrow \bB^{s,1}_{\bbp;\bba}\hookrightarrow \bB^{s,q}_{\bbp;\bba}.
\end{align}


\begin{lemma}\label{lem.nablabes}
For any $\bbp\in [1,\infty]^2$ and $m,n\in\mN_0$, there is a constant $C=C(\bbp,d)>0$ such that for any $f\in \bB^{m+3n,1}_{\bbp;\bba}$,
    \begin{align}\label{est:nabla}
    \|\nabla^m_v \nabla^n_x f\|_{\bbp}\lesssim \|f\|_{\bB^{m+3n,1}_{\bbp;\bba}}.
\end{align}
\end{lemma}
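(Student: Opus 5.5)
Decompose $f$ into its Littlewood--Paley blocks, $f=\sum_{j\ge0}\cR^\bba_j f$, with convergence in $\cS'$ by \eqref{AA13}. Then estimate $\|\nabla^m_v\nabla^n_x\cR^\bba_j f\|_{\bbp}$ by a Bernstein-type inequality: each block costs a factor $2^{j(m+3n)}$. Summing over $j$ gives exactly the $\bB^{m+3n,1}_{\bbp;\bba}$ norm. The triangle inequality in $\mL^\bbp$ (a Banach space, since it is an iterated Lebesgue norm) then yields \eqref{est:nabla}, once we know the series of derivatives converges in $\mL^\bbp$ to $\nabla^m_v\nabla^n_x f$. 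This holds because the series is absolutely convergent in $\mL^\bbp$ and agrees with the distributional derivative in $\cS'$.

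\emph{Bernstein step.} Fix a function $\tilde\chi\in C^\infty_c(\mR^{2d})$ with $\tilde\chi=1$ on $B^\bba_2$ and support in $B^\bba_4$. Since $\operatorname{supp}\phi^\bba_j\subset\{|\xi|_\bba\le 2^{j+1}\}$, we have $\phi^\bba_j=\phi^\bba_j\,\tilde\chi(2^{-j\bba}\cdot)$ for all $j\ge0$. Hence
\[
\nabla^m_v\nabla^n_x\cR^\bba_j f=\big(\nabla^m_v\nabla^n_x \check{\tilde\chi}_j\big)*\cR^\bba_j f,
\qquad \tilde\chi_j:=\tilde\chi(2^{-j\bba}\cdot).
\]
The inverse Fourier transform of the dilated symbol is the anisotropic rescaling
\[
\check{\tilde\chi}_j(x,v)=2^{j4d}\check{\tilde\chi}(2^{3j}x,2^{j}v).
\]
Differentiating $n$ times in $x$ and $m$ times in $v$ produces the factor $2^{3jn+jm}$. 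Thus $\nabla^m_v\nabla^n_x \check{\tilde\chi}_j=2^{j(m+3n)}K_j$, where $K_j$ is the same $L^1$-normalized rescaling of the Schwartz function $K:=\nabla^m_v\nabla^n_x\check{\tilde\chi}$. In particular $\|K_j\|_{L^1(\mR^{2d})}=\|K\|_{L^1}$ is independent of $j$.

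\emph{Main point.} We need Young's inequality in the mixed norm: $\|K*g\|_{\bbp}\le\|K\|_{L^1}\|g\|_{\bbp}$ for all $\bbp\in[1,\infty]^2$. To prove it, write $K*g(x,v)=\int K(y,w)\,g(x-y,v-w)\,dy\,dw$ and apply Minkowski's integral inequality twice. The inner $L^{p_x}$ norm in $x$ of the translate is $\|g(\cdot,v-w)\|_{p_x}$, and the outer $L^{p_v}$ norm in $v$ is translation invariant. This gives the bound $\int|K(y,w)|\,dy\,dw\,\|g\|_\bbp$, and the endpoint cases $p=\infty$ are handled with sup norms in the same way.

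Combining the two steps, $\|\nabla^m_v\nabla^n_x\cR^\bba_jf\|_\bbp\le \|K\|_{L^1}\,2^{j(m+3n)}\|\cR^\bba_jf\|_\bbp$. Summing over $j$ gives \eqref{est:nabla} with $C=\|K\|_{L^1}$, which depends only on $d,m,n$. The only mildly delicate point is justifying the interchange of derivative and sum, which follows from the absolute convergence above.
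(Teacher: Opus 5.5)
Your proposal is correct and follows the paper's route: decompose $f$ into Littlewood--Paley blocks, bound each block by Bernstein's inequality $\|\nabla^m_v\nabla^n_x\cR^\bba_j f\|_{\bbp}\lesssim 2^{j(m+3n)}\|\cR^\bba_j f\|_{\bbp}$, and apply the triangle inequality, so that the $j$-sum is exactly the $\bB^{m+3n,1}_{\bbp;\bba}$ norm. The only difference is that the paper cites Bernstein's inequality \eqref{Ber}, while you prove the equal-integrability case yourself (fattened cutoff $\tilde\chi$, anisotropic rescaling, mixed-norm Young inequality via Minkowski) and also justify exchanging the derivative with the sum, which the paper leaves implicit; you are also right that the constant must depend on $m,n$, not only on $\bbp,d$.
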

\begin{proof}
    We will make use of Bernstein's inequalities (see \cite{ZZ21}): for any $j\ge0$,
    \begin{align}\label{Ber}
\|\nabla^{k_1}_x\nabla^{k_2}_v\cR^\bba_j f\|_{\bbp'}\lesssim 2^{j \bba\cdot(\bbk+\frac{d}{\bbp}-\frac{d}{\bbp'})}\|\cR^\bba_j  f\|_{\bbp},\qquad  \|\cR^\bba_jf   \|_{\bbp}\lesssim \|f\|_{\bbp}.
\end{align}
    It follows that
    \begin{align*}
        \|\nabla^m_v \nabla^n_x f\|_{\bbp}\le \sum_{j\ge-1}\|\nabla^m_v \nabla^n_x \cR_j^\bba f\|_{\bbp}\lesssim  \sum_{j\ge-1}2^{j(m+3n)}\|\nabla^m_v \nabla^n_x \cR_j^\bba f\|_{\bbp}\lesssim \|f\|_{\bB^{m+3n,1}_{\bbp;\bba}},
    \end{align*}
    as desired.
\end{proof}

\subsection{Examples}\label{sec.examples}
We discuss two specific examples of \eqref{eq:SDE-EM} and their  corresponding convergence rates. 
\begin{example}[Convolution]
     We show that $ b_n$ defined in \eqref{def.bnconvl} satisfies \cref{ass.bn} for any $\vartheta\in(0,\frac12(\bba\cdot \frac{d}{\bbp})^{-1}]$. We see that \eqref{con:bnbdd} holds because
  \begin{align*}
      \sup_n\|b_n\|_{L^\infty_T(\mL^{\bbp})}\le\|b\|_{L^\infty_T(\mL^{\bbp})} <\infty.
  \end{align*}
  For each $N>1$, Young's convolution inequality implies that
  \begin{align*}
    \|b_n\|_{L_T^\infty(\mL^\infty)}&\lesssim \|(b1_{|b|<N})*\phi_n\|_{L_T^\infty(\mL^\infty)}+\|(b1_{|b|\ge N})*\phi_n\|_{L_T^\infty(\mL^\infty)}\\
    &\lesssim \|b1_{|b|<N}\|_{L_T^\infty(\mL^\infty)}+n^{\vartheta\bba\cdot\frac{d}{\bbp}}\|b1_{|b|\ge N}\|_{L_T^\infty(\mL^{\bbp})}\le N+n^{\vartheta\bba\cdot\frac{d}{\bbp}}\|b1_{|b|\ge N}\|_{L_T^\infty(\mL^{\bbp})}.
  \end{align*}
  Hence,
  \begin{align}\label{test:bn}
      \lim_{h\downarrow 0} \sup_n(n^{-1/2}\wedge h)\|b_n\|_{L_T^\infty(\mL^\infty)}&\lesssim \lim_{h\downarrow 0}hN^2+ n^{-1/2}n^{\vartheta\bba\cdot\frac{d}{\bbp}}\|b1_{|b|\ge N}\|_{L_T^\infty(\mL^{\bbp})}\nonumber\\&\le \|b1_{|b|\ge N}\|_{L_T^\infty(\mL^{\bbp})},
  \end{align}
  which vanishes 
  as $N\to \infty$. This shows \eqref{con.bngir}. A similar application of Young's convolution inequality also shows that \eqref{con.taming} with $\zeta=\vartheta\bba\cdot\frac{d}{\bbp}$. Moreover, \cref{lem:f-fn} implies that for any $\delta\in(1,2)$,
  \begin{align*}
       \|b_n-b\|_{L^\infty([0,1],\mB^{-\delta,1}_{\bbp;\bba})}\le c_\delta n^{-\delta\vartheta},
  \end{align*}
  which shows \eqref{con.bnrate}.

Thus, by choosing $\vartheta\in[\frac12,\frac12(\bba\cdot\frac{d}{\bbp})^{-1}]$, \eqref{est:thm-weak-S-1} yields
\begin{align*}
       \|\rho_t-\rho^n_t\|_{\bbq'}\lesssim_C \left(t^{-(\vartheta+\frac12)\bba\cdot\frac{d}{\bbq}}+t^{-\frac12(\bba\cdot\frac{d}{\bbp}+\bba\cdot\frac{d}{\bbq})}\right)n^{-\frac12}.
\end{align*}
\end{example}
\begin{example}[Cut-off]\label{ex.cutoff}
    We show that  $b_n$ defined by \eqref{def.bncutoff} satisfies \cref{ass.bn} with $\zeta=\kappa$ and $\vartheta=\kappa(\bba\cdot\frac{d}{\bbp})^{-1}$.
    Obviously,
     \begin{align*}
      \sup_n\|b_n\|_{\mL^\infty_T(\mL^{\bbp})}\le\|b\|_{\mL^\infty_T(\mL^{\bbp})}<\infty,
  \end{align*}
  so that \eqref{con:bnbdd} holds.
    When $\bbp\neq (\infty,\infty)$, applying \cref{App:cutoff}, we have
\begin{align*}
\|b_n-b\|_{L^\infty([0,1],\mB^{-\delta,1}_{\bbp;\bba})}\lesssim \|b1_{|b|>n^{\kappa}}\|_{L^\infty([0,1],\mB^{-\delta}_{\bbp;\bba})}\lesssim n^{-\delta\kappa(\bba\cdot d/\bbp)^{-1}},
\end{align*}
provided that $\delta\in(0,((p_x\wedge p_v)-1)\bba\cdot\frac{d}{\bbp})$. 
To see that \eqref{con.bngir} holds, we note that $\|b_n\|_{\infty}\lesssim n^\kappa$ so that
   \begin{align*}
      \lim_{h\downarrow 0} \sup_n(n^{-1/2}\wedge h)\|b_n\|_{\infty}\le C\lim_{h\downarrow 0} \sup_n(n^{-1/2}\wedge h)n^{\kappa}=0
  \end{align*}
  provided that $\kappa<\frac12$.  Lastly, \eqref{con.taming} holds with $\zeta=\kappa$ because
  \begin{align*}
   \sup_{n\in\mR^d} n^{-\kappa}\sup_{t\in[0,n^{-1}]}\|b_n(t)\|_\infty\lesssim 1.
  \end{align*}
  Hence in this case, we obtain from \cref{thm-weak} that 
  \begin{align*}
       \|\rho_t-\rho^n_t\|_{\bbq'}\lesssim_C 
       \left(t^{-\kappa-\frac12\bba\cdot\frac{d}{\bbq}}+t^{-\frac12(\bba\cdot\frac{d}{\bbp}+\bba\cdot\frac{d}{\bbq})}\right)(n^{-\frac12}+n^{-\kappa(\bba\cdot\frac{d}{\bbp})^{-1}}).
  \end{align*}
\end{example}

\section{Preliminaries}\label{sec.pre}
\subsection{Estimates on Markov semigroup}\label{sec:Tools}
For each $t>0$, $z=(x,v)\in\Rd\times\Rd$ and any bounded measurable function $f:\R^{2d}\to\R$, we denote
\begin{align}\label{def.GM}
    &G_t:=\Big(\int_0^t W_sds, W_t\Big), \quad  M_t(z):=G_t+\Gamma_tz,
    \\ \label{def:pt-intro}
        &P_tf(z):=\mE f\left(x+tv+\int_0^t W_s\dif s,v+W_t\right)=\mE f\big(M_t(z)\big).
\end{align} 
The process $(M_t(z))_t$ is the solution starting at $z$ to SDE \eqref{eq:SDE} without the drift term (i.e. the underlying free process) and $(P_t)_t$ is its associated Markov semigroup.

   Let $g_t$ be the probability density of $G_t$, which is given by 
\begin{align}
\label{def.densityG}g_t(x,v):=\left(\frac{\pi^2 t^4}{3}\right)^{-d/2}\exp\left(-\frac{3|x|^2+|3x-2tv|^2}{2t^3}\right).
\end{align}
It is straightforward to obtain the following relations
\begin{align}\label{828:00}
&g_t(x,v)=t^{-2d}g_1(t^{-\frac{3}{2}}x,t^{-\frac{1}{2}}v),\nonumber\quad \\ &\Gamma_tg_t(x,v)=t^{-2d}g_1(t^{-\frac{3}{2}}x+t^{-\frac{1}{2}}v,t^{-\frac{1}{2}}v)=t^{-2d}\Gamma_1g_1(t^{-\frac{3}{2}}x,t^{-\frac{1}{2}}v),
\\\label{CC01}
&P_tf(z)
=(g_t*f)(\Gamma_tz).
\end{align}
 
\bl Let  $\bbp\in[1,\infty]^2$; $\alpha,\beta\ge0$ and  $m,n\in\mN_0$. Then, there exist finite constants  
 $C_1=C_1(d,\bbp)$ and $C_2=C_2(d,\alpha,\beta,\bbp,m,n)$ such that for every $t\in(0,1]$,
\begin{align}
    \label{CC03}
    \|g_t\|_{\bbp}\le C_1 t^{-\bba\cdot\frac{d}{2}(1-\frac{1}{\bbp})},
\end{align}
and
\begin{align}\label{CC03+}
\big\||x|^\alpha|v|^\beta|\nabla_x^m\nabla_v^n\partial_t(\Gamma_tg_t)(x,v)|\big\|_\bbp\le C_2 t^{\frac{3(\alpha- m)+(\beta-n)-2}{2}-\bba\cdot\frac{d}{2}(1-\frac{1}{\bbp})}.
\end{align}
\el
\begin{proof}
    We only show \eqref{CC03+}, \eqref{CC03} can be obtained analogously. It follows from the scaling \eqref{828:00} that
    \begin{align*}
        \partial_t(\Gamma_tg_t)(x,v)&=(-2d)t^{-2d-1}\Gamma_1g_1(t^{-3/2}x,t^{-1/2}v)+(-3/2)t^{-2d-5/2}x\cdot \nabla_x \Gamma_1g_1(t^{-3/2}x,t^{-1/2}v)\\
        &\quad+(-1/2)t^{-2d-3/2}v\cdot \nabla_v \Gamma_1g_1(t^{-3/2}x,t^{-1/2}v).
    \end{align*}
   Defining
   \begin{align*}
       G(x,v):=(-2d)\Gamma_1g_1(x,v)+(-3/2)x\cdot \nabla_x \Gamma_1g_1(x,v)+(-1/2)v\cdot \nabla_v \Gamma_1g_1(x,v),
   \end{align*}
   one sees that
   \begin{align*}
       \partial_t(\Gamma_tg_t)(x,v)=t^{-2d-1}G(t^{-3/2}x,t^{-1/2}v)
   \end{align*}
   and then
   \begin{align*}
|x|^\alpha|v|^\beta\nabla_x^m\nabla_v^n\partial_t(\Gamma_tg_t)(x,v)=t^{-2d-1+[3(\alpha-m)+(\beta-n)]/2}|t^{-3/2}x|^\alpha|t^{-1/2}v|^\beta \nabla_x^m\nabla_v^nG(t^{-3/2}x,t^{-1/2}v).
   \end{align*}
   Therefore, we have
   \begin{align*}
&\big\||x|^\alpha|v|^\beta|\nabla_x^m\nabla_v^n\partial_t(\Gamma_tg_t)(x,v)|\big\|_\bbp\\
&=t^{-2d-1+[3(\alpha-m)+(\beta-n)]/2} t^{\bba\cdot \frac{d}{\bbp}}\big\||x|^\alpha|v|^\beta|\nabla_x^m\nabla_v^nG(x,v)|\big\|_\bbp\lesssim t^{\frac{3(\alpha- m)+(\beta-n)-2}{2}-\bba\cdot\frac{d}{2}(1-\frac{1}{\bbp})},
   \end{align*}
   because $\Gamma_1g_1$ is a Schwartz function. This completes the proof.
\end{proof}

We note that for any $t\ge0$ and $\bbp\in[1,\infty]^2$, $ \|\Gamma_t f\|_{\bbp}= \| f\|_{\bbp},$ 
which shows that $(\Gamma_t)_{t\geq0}$ is invariant on $\LL^\bbp$.

\begin{lemma}
    \label{lem:est-pro-sob} 
   Let $\alpha\in (0,1)$, $\bbp=(p_x,p_v)\in[1,\infty]^2$ and put $p:=p_x\wedge p_v$. Let $(M_t(z)), z\in\mR^{2d}$ be as in \eqref{def.GM} and  $f\in\mB_{\bbp,\bba}^{\alpha}$.
   There exists constant $C=C(\alpha,p,d, \bbp)$ such  that for every $0< s\le t$ and $r\ge0$, for all $z$
   \begin{align}
       &\label{est:E-f-Z}
       \| f(M_t(z))\|_{L^p(\Omega)}\leq Ct^{-\bba\cdot\frac{d}{2\bbp}} \|f\|_{\bbp}.
   \end{align}
\end{lemma}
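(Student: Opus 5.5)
The plan is to compute the $p$-th moment directly through the Gaussian density of $G_t$. By \eqref{def.GM}, $M_t(z)=G_t+\Gamma_t z$, and $G_t$ has density $g_t$ from \eqref{def.densityG}. Therefore
\begin{align*}
\mE|f(M_t(z))|^p=\int_{\R^{2d}}|f(w+\Gamma_t z)|^p g_t(w)\dif w=(g_t*|f|^p(-\cdot))(-\Gamma_tz)\quad\text{(up to a reflection)}.
\end{align*}
This equals $P_t(|f|^p)(z)$ by \eqref{CC01}. So the claim reduces to the bound
\begin{align*}
\sup_{y}\int |f|^p(w+y)\,g_t(w)\dif w\lesssim t^{-\bba\cdot\frac{d}{2\bbp}p}\|f\|_{\bbp}^p .
\end{align*}
Taking the $p$-th root then gives \eqref{est:E-f-Z}.

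To prove this bound, set $h:=|f|^p$. Since $p\le p_x$ and $p\le p_v$, we have $h\in\mL^{\bbp/p}$ with $\|h\|_{\bbp/p}=\|f\|_{\bbp}^p$, and the exponent $\bbp/p=(p_x/p,p_v/p)$ lies in $[1,\infty]^2$. Apply Hölder's inequality in the mixed norm, first in $x$ and then in $v$, with conjugate exponent $\bbr:=(\bbp/p)'$. Because the $\mL^{\bbp/p}$ norm is translation invariant, this yields
\begin{align*}
\int h(w+y)g_t(w)\dif w\le \|h\|_{\bbp/p}\,\|g_t\|_{\bbr}.
\end{align*}
Next apply \eqref{CC03} with exponent $\bbr$. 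It gives $\|g_t\|_{\bbr}\lesssim t^{-\bba\cdot\frac d2(1-\frac1{\bbr})}$, and $1-\frac{1}{\bbr}=\frac{p}{\bbp}$. The resulting power is $t^{-\bba\cdot\frac{d}{2\bbp}p}$, which is exactly what is required.

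The only point needing care is the case where a component of $\bbp$ is infinite. If $p_x=\infty$, then $p_x/p=\infty$ and the conjugate is $1$, which is still covered by \eqref{CC03}. The $\sup$-norm convention needs no special treatment, since only finitely many Hölder steps are used. In the fully infinite case $p=\infty$, the estimate is trivial: $\|f(M_t(z))\|_{L^\infty(\Omega)}\le\|f\|_\infty$. I expect no real obstacle here; the mixed-norm Hölder step with the exponent bookkeeping $1-p/\bbp$ is the only step that needs to be checked carefully. The Besov hypothesis $f\in\mB^\alpha_{\bbp,\bba}$ and the parameters $s,r$ play no role in this particular estimate, beyond ensuring that $f$ is a measurable function in $\mL^\bbp$.
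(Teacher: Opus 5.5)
Your proposal is correct and matches the paper's proof essentially step for step. The paper also writes $\mE|f(M_t(z))|^p=\int|f(z'+\Gamma_tz)|^pg_t(z')\dif z'$, applies the mixed-norm H\"older inequality with $\bbq=(\frac{p_x}{p_x-p},\frac{p_v}{p_v-p})$, which is exactly your $(\bbp/p)'$, and concludes via \eqref{CC03}. Your separate handling of the case $p=\infty$, where the paper's exponent is undefined, is a small addition in care over the paper.
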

\begin{proof}
    We recall that $g_t$, as defined in  \eqref{def.densityG}, is  the density of  $G_t$ from \eqref{def.GM}. By H\"older’s inequality for $\bbq:=(\frac{p_x}{p_x-p},\frac{p_v}{p_v-p})$, we have
    \begin{align*}
    \| f(M_t(z))\|_{L^p(\Omega)}^p=\int_{\mR^{2d}}|f(z'+\Gamma_t z)|^pg_t(z')\dif z' \lesssim    \|f\|_{\bbp}^p\|g_t\|_{\bbq}.
    \end{align*}
    In view of \eqref{CC03}, this  yields \eqref{est:E-f-Z}.
\end{proof}




\begin{corollary}\label{lem002}
    Let $f\in \mL^{\bbp^\prime}$
  with $\bbp^\prime=(p_x^\prime,p_v^\prime)\in[1,\infty]^2$. For any $\bbp=(p_x,p_v)\in[p_x^\prime,\infty]\times[p_v^\prime,\infty]$,   any  $\delta\in[0,1]$, and  any $k\in\mN_0$, there exists constant $C>0$ depending on $\bbp^\prime,d,\bbp,\delta,k$ such that  for any $(s,t)\in(0,1]_\leq^2$,
\begin{align}
    \label{est:semi-P-L}
   \|{\nabla^k_v}(P_tf-P_s\Gamma_{t-s}f)\|_{\bbp}\le C\|f\|_{{\bbp}^\prime}|t-s|^\delta s^{d(\frac{3}{2}(\frac{1}{p_x}-\frac{1}{p_x^\prime})+\frac{1}{2}(\frac{1}{p_v}-\frac{1}{p_v^\prime})){-\frac{k}{2}}-\delta}.
 \end{align}
In particular, for $f\in \mL^\infty$, we have for any $(s,t)\in(0,1]_\leq^2$ that
\begin{align}\label{est:infty}
\|P_tf-P_s\Gamma_{t-s}f\|_{\infty}\le C\Big([(t-s)s^{-1}]\wedge1\Big)\|f\|_{\infty}.
\end{align}
\end{corollary}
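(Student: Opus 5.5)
The plan is to interpolate between the two paths $r\mapsto P_r\Gamma_{t-r}f$, $r\in[s,t]$, which equals $P_sf\circ$-type at $r=s$ (namely $P_s\Gamma_{t-s}f$) and $P_tf$ at $r=t$. We differentiate along this path in the regime $t\le 2s$. In the regime $t>2s$ we simply estimate the two terms separately.

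\emph{Kernel representation.} First we record the structure. Since $\Gamma_a\Gamma_b=\Gamma_{a+b}$ and $\Gamma_a$ is linear and measure preserving, \eqref{def:pt-intro} gives
\[
P_r\Gamma_{t-r}f(z)=\mE f\big(\Gamma_{t-r}G_r+\Gamma_tz\big)=(h_r*f)(\Gamma_tz).
\]
Here $h_r(y)=g_r(\Gamma_{r-t}y)$ is the density of $\Gamma_{t-r}G_r$. Moreover, $\nabla_v[F(\Gamma_t z)]=\big((t\nabla_x+\nabla_v)F\big)(\Gamma_tz)$. Hence every $v$-derivative can be moved onto the kernel, giving $\nabla^k_v(P_r\Gamma_{t-r}f)=\big(((t\nabla_x+\nabla_v)^kh_r)*f\big)\circ\Gamma_t$. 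Since $\Gamma_t$ preserves $\|\cdot\|_{\bbp}$, Young's inequality in mixed norms, with $\bbr$ defined by $1+\frac1{\bbp}=\frac1{\bbr}+\frac1{\bbp'}$, reduces everything to $\mL^\bbr$-bounds on kernels. These are handled by the scaling argument of \eqref{CC03}--\eqref{CC03+}. The resulting factor $r^{-\bba\cdot\frac d2(1-\frac1\bbr)}$ is exactly $r^{d(\frac32(\frac1{p_x}-\frac1{p'_x})+\frac12(\frac1{p_v}-\frac1{p'_v}))}$.

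\emph{Case $t\le 2s$.} We write
\[
\nabla^k_v(P_tf-P_s\Gamma_{t-s}f)=\int_s^t\Big(\big((t\nabla_x+\nabla_v)^k\partial_rh_r\big)*f\Big)\circ\Gamma_t\,\dif r .
\]
Differentiating $h_r(y)=g_r(y_x+(r-t)y_v,y_v)$ in $r$ produces $\partial_rg_r$ plus a term $y_v\cdot\nabla_xg_r$, both evaluated at $\Gamma_{r-t}y$. By the scaling \eqref{828:00}, as in the proof of \eqref{CC03+} (weights $\alpha=0$, $\beta=1$, $m=1$ for the second term), each term has $\mL^\bbr$-norm at most $r^{-1}$ times the Gaussian factor. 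The transport $\Gamma_{r-t}$ is harmless because $|r-t|\le r$ here, and $\Gamma_{r-t}$ preserves norms after a change of variables. Each $t\nabla_x$ costs $t\,r^{-3/2}\lesssim r^{-1/2}$, since $r\in[s,t]$ and $t\le 2s$; each $\nabla_v$ also costs $r^{-1/2}$. Integrating over $[s,t]$ therefore gives the bound $|t-s|\,s^{-1-\frac k2}s^{d(\cdots)}\|f\|_{\bbp'}$. Since $|t-s|/s\le1$, we have $|t-s|s^{-1}\le(|t-s|/s)^\delta$ for every $\delta\in[0,1]$, which is \eqref{est:semi-P-L}.

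\emph{Case $t>2s$.} We bound the two terms separately. For the first, $\|\nabla^k_vP_tf\|_\bbp\lesssim t^{-k/2+d(\cdots)}\|f\|_{\bbp'}$, using the same kernel bound with $t\nabla_x$ costing $t\cdot t^{-3/2}$. For the second, $\|\nabla^k_vP_s\Gamma_{t-s}f\|_{\bbp}\lesssim s^{-k/2+d(\cdots)}\|\Gamma_{t-s}f\|_{\bbp'}$, using the representation $(g_s*\Gamma_{t-s}f)(\Gamma_sz)$ and the fact that $\Gamma_{t-s}$ preserves norms. All exponents are nonpositive because $\bbp\ge\bbp'$, so $t^{(\cdot)}\le s^{(\cdot)}$. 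Finally $1\le(|t-s|/s)^\delta$, which again gives \eqref{est:semi-P-L}.

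The special case \eqref{est:infty} follows by taking $\bbp=\bbp'=(\infty,\infty)$ and $k=0$, and choosing $\delta=1$ or $\delta=0$ according to whether $t-s\le s$. The main obstacle is the $r$-derivative of the transported kernel $g_r\circ\Gamma_{r-t}$ together with the $t\nabla_x$ factors. This is precisely why we split into the cases $t\le 2s$ and $t>2s$: the direct differentiation argument only works when $r\sim s\sim t$.
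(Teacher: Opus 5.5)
Your proof is correct and is essentially the paper's argument: your path $r\mapsto P_r\Gamma_{t-r}f=(h_r*f)\circ\Gamma_t$ coincides with the paper's $(\Gamma_r g_r)*(\Gamma_t f)$, and both proofs differentiate in $r$ and close with mixed-norm Young plus the Gaussian scaling \eqref{828:00}. The only difference is that pulling $\Gamma_t$ outside leaves you with the $t$-dependent kernel $g_r\circ\Gamma_{r-t}$, which is why you need the split at $t=2s$. The paper instead lets $\nabla_v$ act on the exactly self-similar kernel $\Gamma_r g_r$, so it needs no split. Equivalently, $(t\nabla_x+\nabla_v)(F\circ\Gamma_{r-t})=((r\nabla_x+\nabla_v)F)\circ\Gamma_{r-t}$, so each derivative costs $r^{-1/2}$ for every $r$. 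That said, your split does cleanly cover the case $\delta=0$, $k=0$, $\bbp=\bbp'$: there the paper's integral $\int_s^t r^{-1}dr=\log(t/s)$ is not $O(1)$, and one has to fall back on contractivity of $P_t$.
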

 \begin{proof}
 It suffices to show \eqref{est:semi-P-L}.
 Putting $F_t:=\Gamma_tf$ and using \eqref{CC01}, one sees that
 \begin{align*}
P_tf-P_s\Gamma_{t-s}f=(\Gamma_tg_t)*(\Gamma_tf)-(\Gamma_sg_s)*(\Gamma_tf)=\int_s^t\p_r\Gamma_rg_r\ast F_t\dif r.
 \end{align*}
From here, we apply Young’s inequality for convolution with $\frac{1}{\bbp}+(1,1)=\frac{1}{\bbp^\prime}+\frac{1}{{\bbq}}$ and  \eqref{CC03+} to obtain that
 \begin{align*}
  \|  {\nabla^k_v}( P_tf-P_s\Gamma_{t-s}f)\|_{\bbp}
  &= \| {\nabla^k_v}\int_s^t\p_r\Gamma_rg_r\ast F_t\dif r\|_{\bbp}
  \lesssim \int_s^t \| {\nabla^k_v}\p_r\Gamma_rg_r\|_{\bbq}  \|F_t\|_{{\bbp}^\prime}\dif r
        \\& \lesssim\|f\|_{{\bbp}^\prime}\int_s^t r^{d(\frac{3}{2q_x}+\frac{1}{2q_v}-2){-\frac{k}{2}}-1}\dif r
\\&=\|f\|_{{\bbp}^\prime}\int_s^t r^{d(\frac{3}{2}(\frac{1}{p_x}-\frac{1}{p_x^\prime})+\frac{1}{2}(\frac{1}{p_v}-\frac{1}{p_v^\prime}))-1{-\frac{k}{2}}}\dif r
\\&\lesssim\|f\|_{{\bbp}^\prime}|t-s|^\delta s^{d(\frac{3}{2}(\frac{1}{p_x}-\frac{1}{p_x^\prime})+\frac{1}{2}(\frac{1}{p_v}-\frac{1}{p_v^\prime})){-\frac{k}{2}}-\delta}
 \end{align*}
 for any $\delta\in[0,1]$.
 This shows \eqref{est:semi-P-L}. 
\end{proof}
 We conclude the current section with some analytic estimates on $P_t$.

\begin{lemma}
     Let  $\beta\in\mR$; $\bbp_1,\bbp_2\in[1,\infty]^2$ such that $\bbp_1\leq \bbp_2$. 
     There exists a constant $C=C(d,\beta,\bbp_1,\bbp_2),$ such for any $t>0$, $h\in \bB^{\beta}_{\bbp_1;\bba}$ and $f\in \LL_{\bbp_1}$,
     \begin{gather}
           \|P_t h\|_{\bbp_2}\le C (1\wedge t)^{\frac\beta2-\frac12\bba\cdot(\frac{d}{\bbp_1}-\frac{d}{\bbp_2})}\|h\|_{\bB^{\beta}_{\bbp_1;\bba}} \tif \beta\neq \bba\cdot(\frac{d}{\bbp_1}-\frac{d}{\bbp_2}),\label{est.Ppbes}
           \\\|\nabla_vP_t h\|_{\bbp_2}\le C (1\wedge t)^{\frac\beta2-\frac12-\frac12\bba\cdot(\frac{d}{\bbp_1}-\frac{d}{\bbp_2})}\|h\|_{\bB^{\beta}_{\bbp_1;\bba}} \tif \beta\neq1+ \bba\cdot(\frac{d}{\bbp_1}-\frac{d}{\bbp_2}),\label{est.DPpbes}
           \\ \|P_t f\|_{\bbp_2}\le C(1\wedge t)^{-\frac12\bba\cdot(\frac{d}{\bbp_1}-\frac{d}{\bbp_2})}\|f\|_{\bbp_1} ,\label{est.Ppp}
           \\  \mathbf{1}_{\beta+\bba\cdot(\frac{d}{\bbp_1}-\frac{d}{\bbp_2})\neq0}\|P_tf\|_{\bB_{\bbp_2;\bba}^{\beta,1}}+ \|P_tf\|_{\bB_{\bbp_2;\bba}^{\beta}}\leq C(1\wedge t)^{-\frac\beta2-\frac12\bba\cdot(\frac{d}{\bbp_1}-\frac{d}{\bbp_2})}\|f\|_{\bbp_1}.\label{est.Pbesp}
     \end{gather}
\end{lemma}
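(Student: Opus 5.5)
Everything reduces to the convolution representation \eqref{CC01}, $P_tf(z)=(g_t*f)(\Gamma_tz)$. Since $\Gamma_t$ is an isometry of every $\LL^{\bbp}$, we get $\|P_tf\|_{\bbp_2}=\|g_t*f\|_{\bbp_2}$. We will also use that for large $t$ the semigroup is contractive on $\LL^{\bbp_2}$, which gives the factor $1\wedge t$: for $t\ge1$ we write $P_t=P_{t-1}P_1$ and bound $P_{t-1}$ by $1$ on $\LL^{\bbp_2}$. So we only treat $t\in(0,1]$, for which each claimed bound is a scaling estimate.

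\emph{Step 1: \eqref{est.Ppp}.} Apply Young's inequality in mixed norms with $1+\frac1{\bbp_2}=\frac1{\bbp_1}+\frac1{\bbq}$. By \eqref{CC03},
\[
\|g_t\|_{\bbq}\lesssim t^{-\bba\cdot\frac d2(1-\frac1{\bbq})}=t^{-\frac12\bba\cdot(\frac d{\bbp_1}-\frac d{\bbp_2})},
\]
which is the claim. The same argument with $\nabla_v(\Gamma_tg_t)$ in place of $g_t$, together with the scaling \eqref{828:00} (the derivative costs $t^{-1/2}$), gives
\[
\|\nabla_vP_tf\|_{\bbp_2}\lesssim t^{-\frac12-\frac12\bba\cdot(\frac d{\bbp_1}-\frac d{\bbp_2})}\|f\|_{\bbp_1}.
\]
We need to check that $\nabla_v$ commutes appropriately with $\Gamma_t$; indeed $\nabla_v(F\circ\Gamma_t)=t\nabla_xF+\nabla_vF$ evaluated at $\Gamma_t z$, and $t\nabla_x$ costs $t\cdot t^{-3/2}=t^{-1/2}$, the same order.

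\emph{Step 2: \eqref{est.Ppbes} and \eqref{est.DPpbes}.} Decompose $h=\sum_j\cR^\bba_jh$. For each block we use two estimates:
\begin{itemize}
\item the crude bound from Step 1, $\|P_t\cR_jh\|_{\bbp_2}\lesssim t^{-\frac12\bba\cdot(\frac d{\bbp_1}-\frac d{\bbp_2})}2^{-j\beta}\|h\|_{\bB^\beta_{\bbp_1;\bba}}$;
\item a decay bound exploiting the Gaussian symbol, $\|P_t\cR_jh\|_{\bbp_2}\lesssim(t2^{2j})^{-N}\,t^{-\frac12\bba\cdot(\cdots)}2^{-j\beta}\|h\|$ for every $N$.
\end{itemize}
For the decay bound, write $\cR_j=\cR_j\tilde\cR_j$ and move anisotropic derivatives of high order onto $g_t$. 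Concretely, $\cR_jh=\Delta_v^N(\Delta_v^{-N}\cR_jh)$ and $\Delta_x^{N}$ analogues (using that the annulus has either $|\xi_1|^{1/3}$ or $|\xi_2|$ comparable to $2^j$), together with Bernstein \eqref{Ber} and \eqref{CC03+}-type bounds on $\nabla^{m}_x\nabla^n_v(\Gamma_tg_t)$.

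The main obstacle is that $\Gamma_t$ shears frequencies, $(\xi_1,\xi_2)\mapsto(\xi_1,\xi_2+t\xi_1)$. For $t\le1$, however, $|t\xi_1|\le|\xi_1|\lesssim2^{3j}$, so the dyadic annulus is distorted only into a bounded union of neighbouring annuli. I would make this precise by putting $\Gamma_t$ on the kernel side, $P_t\cR_jh=(\Gamma_tg_t)*(\Gamma_t\cR_jh)$.

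Summing over $j$, we split at $2^{2j}\approx t^{-1}$. The low frequencies give $\sum_{2^{2j}\le t^{-1}}2^{-j\beta}$ and the high frequencies give $\sum_{2^{2j}>t^{-1}}(t2^{2j})^{-N}2^{-j\beta}$. Both sums are $\lesssim t^{\beta/2}$ when $\beta<0$, which is the claimed bound. When $\beta>0$, one simply uses $\|P_th\|\lesssim t^{-\frac12\bba\cdot(\cdots)}\|h\|_{\bbp_1}$, since $\bB^{\beta}\hookrightarrow\bB^{0,1}\hookrightarrow\LL^{\bbp}$ and $t^{\beta/2}\le1$. The gradient version is identical with $\beta$ replaced by $\beta-1$.

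The exclusion $\beta\neq\bba\cdot(\cdots)$ suggests the authors instead first bound $P_th$ in $\bB^{0}_{\bbp_1}$ and then use Bernstein to pass from $\bbp_1$ to $\bbp_2$. In that route a geometric sum with ratio $2^{j(\bba\cdot(\frac d{\bbp_1}-\frac d{\bbp_2})-\beta)}$ appears, which degenerates exactly when the exponent vanishes. I would follow that route if the direct Young argument above gets awkward.

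\emph{Step 3: \eqref{est.Pbesp}.} The block bound from Step 2, with $\beta=0$ input from $\LL^{\bbp_1}\hookrightarrow\bB^0_{\bbp_1;\bba}$, gives
\[
2^{j\beta}\|\cR_jP_tf\|_{\bbp_2}\lesssim 2^{j\beta}\min\big(1,(t2^{2j})^{-N}\big)\,2^{j\bba\cdot(\frac d{\bbp_1}-\frac d{\bbp_2})}\|f\|_{\bbp_1}.
\]
Here Bernstein \eqref{Ber} is applied blockwise to change integrability. Taking the supremum over $j$ gives the $\bB^\beta$ bound, with the maximum attained at $2^{2j}\approx t^{-1}$. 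Summing over $j$ gives the $\bB^{\beta,1}$ bound, a geometric sum that is controlled provided the total exponent $\beta+\bba\cdot(\frac d{\bbp_1}-\frac d{\bbp_2})$ is nonzero, which explains the indicator. For $t\ge1$ we use the contraction argument described at the start.
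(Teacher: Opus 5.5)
\paragraph{Step 2 is sound only for $\beta<0$.} Write $\kappa:=\bba\cdot(\frac{d}{\bbp_1}-\frac{d}{\bbp_2})$. Your Step~2 works for $\beta<0$, and for $\beta<1$ in the gradient version, but not beyond.

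For $\beta>0$ you obtain $\|P_th\|_{\bbp_2}\lesssim t^{-\kappa/2}\|h\|_{\bB^{\beta}_{\bbp_1;\bba}}$ and then appeal to $t^{\beta/2}\le1$. That inequality runs the wrong way: for $t\le1$ the claimed factor $t^{\frac{\beta-\kappa}{2}}$ is \emph{smaller} than $t^{-\kappa/2}$.

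The case $\beta=0$ is not covered either, and it is exactly the case used for $\|P_sh^\prec_s\|_\infty$ in \cref{lem:estIn2}. Your block bound pays $t^{-\kappa/2}$ on every block through Young's inequality. The low-frequency sum $\sum_{2^{2j}\le t^{-1}}1$ therefore leaves a factor $\log(1/t)$, and $\bB^{0}_{\bbp_1;\bba}$ does not embed into $\bB^{0,1}_{\bbp_1;\bba}$.

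The missing idea is the one you mention at the end but do not carry out: change integrability blockwise by Bernstein \eqref{Ber}. Apply it to the frequency-localized function $g_t*\cR^\bba_jh=\cR^\bba_j(g_t*h)$; the shear $\Gamma_t$ stays outside and is an isometry. This gives
\begin{align*}
\|P_t\cR^\bba_jh\|_{\bbp_2}\lesssim 2^{j(\kappa-\beta)}\min\big(1,(t2^{2j})^{-N}\big)\|h\|_{\bB^{\beta}_{\bbp_1;\bba}} .
\end{align*}
Summing over $j$ yields $t^{\frac{\beta-\kappa}{2}}$ precisely when $\beta<\kappa$, which is where the exclusion $\beta\neq\kappa$ comes from. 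This is the content the paper imports by quoting the Besov smoothing bound \eqref{est:Pt-itself-1} from \cite{HRZ} with $(\beta_1,\beta_2)=(\beta,0)$, $(\beta,1)$ or $(0,\beta)$, followed by \eqref{AB2} and \cref{lem.nablabes}.

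\paragraph{The statement is false in part of its range.} For $\beta>\kappa$ the same sum is $O(1)$, and nothing better is possible: \eqref{est.Ppbes} is false there as written. Take $\bbp_1=\bbp_2$, $\beta=1$ and $0\neq h\in C^\infty_c$. As $t\downarrow0$ the left side tends to $\|h\|_{\bbp_1}$, while the right side tends to $0$.
\begin{itemize}
\item Likewise, \eqref{est.DPpbes} fails for $\beta>1+\kappa$.
\item \eqref{est.Pbesp} fails for $\beta+\kappa<0$. There the supremum in your Step~3 is attained at $j=0$, not at $2^{2j}\approx t^{-1}$.
\end{itemize}
So the lemma holds only with the exponents of $1\wedge t$ capped at $0$. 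Every application in the paper lies in that range: $\beta\in\{0,-\delta\}$ in \eqref{est.Ppbes}--\eqref{est.DPpbes}, and $\beta\ge1$ in \eqref{est.Pbesp}. Your ``$t^{\beta/2}\le1$'' step was in effect trying to prove the false part.

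\paragraph{Two smaller points.}
\begin{itemize}
\item In Step~3 you need $\cR^\bba_jP_tf$, not $P_t\cR^\bba_jf$, and your reason why the shear is harmless is incorrect. A $v$-frequency shift of size $t|\xi_1|\sim t2^{3j}$ can move the anisotropic level from $j$ up to about $3j$ when $t\approx1$. What actually works is the following. For $t2^{2j}\lesssim1$ the shift is $\lesssim2^j$. For $t2^{2j}\gg1$ the Gaussian symbol supplies the decay, since in the output frequency $\zeta$ it is comparable to $\exp(-c(t^3|\zeta_1|^2+t|\zeta_2|^2))$.
\item The reduction $P_t=P_{t-1}P_1$ with $\|P_{t-1}F\|_{\bbp_2}\le\|F\|_{\bbp_2}$ does not handle $\nabla_v$ or Besov norms for $t\ge1$, because $\nabla_vP_s=P_s\nabla_v+sP_s\nabla_x$. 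Use the kernel bounds directly instead; by scaling they hold for all $t>0$.
\end{itemize}
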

\begin{proof}
     We put $\kappa:=\bba\cdot(\frac{d}{\bbp_1}-\frac{d}{\bbp_2})$. Let $\beta_1,\beta_2$ be two real numbers.   
    It is shown in \cite[Lemma 2.16, (2.35)]{HRZ} that for any $f\in \bB_{\bbp_1;\bba}^{\beta_1}$,
     \begin{align}\label{est:Pt-itself-1}
      \mathbf{1}_{\beta_2\neq\beta_1-\kappa_1}\|P_tf\|_{\bB_{\bbp_2;\bba}^{\beta_2,1}}+ \|P_tf\|_{\bB_{\bbp_2;\bba}^{\beta_2}}\lesssim(1\wedge t)^{-\frac{(\beta_2-\beta_1+\kappa)}{2}}\|f\|_{\bB_{\bbp_1;\bba}^{\beta_1}}.
     \end{align}
     By choosing $(\beta_1,\beta_2)=(\beta,0)$ and noting that $ \|P_t h\|_{\bbp_2}\lesssim \|P_t h\|_{\bB^{0,1}_{\bbp_2;\bba}}$ (by embedding \eqref{AB2}), we obtain \eqref{est.Ppbes} from \eqref{est:Pt-itself-1}. By \cref{lem.nablabes}, $\|\nabla_vP_t h\|_{\bbp_2}\lesssim\|\nabla_vP_t h\|_{\bB^{1,1}_{\bbp_2;\bba}}$, we can obtain \eqref{est.DPpbes} from \eqref{est:Pt-itself-1} in a similar way. 
     When $\bbp_1\neq\bbp_2$, we take $\beta=0$ in \eqref{est.Ppbes} and apply the inequality $\|f\|_{\bB^{0}_{\bbp_1;\bba}} \lesssim\|f\|_{\bbp_1} $ (by embedding \eqref{AB2}), to  obtain \eqref{est.Ppp}. In the case when $\bbp_1=\bbp_2$, \eqref{est.Ppp} is straightforward from definitions. Similarly, \eqref{est.Pbesp} is a consequence of embedding \eqref{AB2} and \eqref{est:Pt-itself-1} upon taking $(\beta_1,\beta_2)=(0,\beta)$.
\end{proof}

\subsection{Exponential estimates}
The next result allows applications of Girsanov transform. 
\begin{proposition}\label{lem:Kaha}
  Let  $\bbp_0\in[1,\infty]^2$ be such that $\bba\cdot\frac{d}{\bbp_0}<2$. Let $\|f\|_{L_T^\infty(\mL^{\bbp_0})}<\infty$.
  Let $(f_n)_n$ be a sequence of bounded functions on $[0,1]\times \R^{2d}$   such that 
  \begin{align}\label{con:Gir_gene}
\sup_n\|f_n\|_{L_T^\infty(\mL^{\bbp_0})}<\infty\quad \text{and}\quad  \lim_{h\downarrow 0} \sup_n(n^{-1}\wedge h)\|f_n\|_{L_T^\infty(\mL^\infty)}=0. 
  \end{align}
   Then for any real number  $c$,
\begin{align}\label{est:novikow-1}
    &\sup_z \mE\exp\Big(c\int_0^1f(r,M_{r}(z))\dif r\Big)<\infty,\\
   &\sup_n\sup_z \mE\exp\Big(c\int_0^1f_n(r,M_{k_n(r)}(z))\dif r\Big)<\infty.\label{est:novikow}
\end{align}
\end{proposition}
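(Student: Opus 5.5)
The plan is to use Khasminskii's lemma. Suppose $g\ge0$ is a function on $[0,1]\times\R^{2d}$ and $Y$ is a Markov-type process with
\[
\sup_{s,z}\mE\Big[\int_s^{s+h}g(r,Y_r)\dif r\,\Big|\,\sF_s\Big]\le \alpha<1
\]
uniformly in the starting point. Then
\[
\mE\exp\Big(\int_s^{s+h}g(r,Y_r)\dif r\Big)\le (1-\alpha)^{-1}.
\]
Splitting $[0,1]$ into $1/h$ subintervals and using the Markov property, one gets $\sup_z\mE\exp(\int_0^1 g)\le (1-\alpha)^{-1/h}$. Since $|c f|$ dominates, it suffices to take $g=|c||f|$, respectively $g=|c||f_n|$. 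The task therefore reduces to showing that the conditional expectations over short windows are uniformly small.

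For \eqref{est:novikow-1}, the process $M_r(z)$ is Markov with semigroup $P$, and $\mE[\,|f|(r,M_r(z))\,|\,\sF_s]=P_{r-s}|f|(r,M_s(z))$. By \eqref{est.Ppp} with $\bbp_1=\bbp_0$ and $\bbp_2=(\infty,\infty)$,
\[
\|P_{r-s}|f|(r)\|_\infty\lesssim (r-s)^{-\frac12\bba\cdot\frac{d}{\bbp_0}}\|f\|_{L^\infty_T(\mL^{\bbp_0})}.
\]
Integrating over $r\in[s,s+h]$ gives the bound $\lesssim h^{1-\frac12\bba\cdot\frac d{\bbp_0}}$, which is integrable because $\bba\cdot\frac d{\bbp_0}<2$ and tends to $0$ as $h\to0$. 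Choosing $h$ small so that $\alpha=\frac12$ finishes this case.

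For \eqref{est:novikow}, the integrand at time $r$ depends on $M_{k_n(r)}(z)$, so I would condition with respect to the grid. Fix $h$ and $s$, and consider $\mE[\int_s^{s+h}|f_n|(r,M_{k_n(r)})\dif r\,|\,\sF_s]$. I split the integral according to whether $k_n(r)\le s$ or $k_n(r)>s$.

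The first piece is $r\in[s,(k_n(s)+\frac1n)\wedge(s+h)]$. There the integrand is $\sF_s$-measurable and bounded by $\|f_n\|_\infty$, so this piece is at most $(n^{-1}\wedge h)\|f_n\|_{L^\infty_T(\mL^\infty)}$. By \eqref{con:Gir_gene} this is uniformly small in $n$ as $h\downarrow0$. This is exactly where the second condition in \eqref{con:Gir_gene} is used, and I expect this step to be the main obstacle: getting this piece uniform in $n$ is delicate, but the assumption is tailored to it.

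For the remaining $r$, with $k_n(r)>s$, we have $\mE[\,|f_n|(r,M_{k_n(r)})\,|\,\sF_s]=P_{k_n(r)-s}|f_n|(r,M_s)$. This is bounded by $C(k_n(r)-s)^{-\gamma}$ with $\gamma=\frac12\bba\cdot\frac d{\bbp_0}<1$, using the uniform $\mL^{\bbp_0}$ bound. Since $k_n(r)-s\ge (r-s)-\frac1n$ and $k_n(r)-s$ is the grid point distance, the sum
\[
\sum_{j}\frac1n\,(jn^{-1}-s)^{-\gamma}
\]
over grid points in $(s,s+h]$ is comparable to $\int_0^{h}u^{-\gamma}\dif u\lesssim h^{1-\gamma}$. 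The nearest grid term is at distance possibly tiny, but it is weighted by only $1/n$. I would handle it by noting that the grid point $k_n(s)+\frac1n$ is at distance $\le\frac1n$ from $s$. If that distance $\epsilon$ is small, the conditional expectation is taken at the stopping point, and I bound that single interval instead by $\|f_n\|_\infty\cdot\frac1n\wedge h$, again using \eqref{con:Gir_gene}. In both cases $\alpha(h)\to0$ uniformly in $n$ and $z$, and Khasminskii's lemma, applied along the grid-compatible Markov chain $(M_{k_n(r)})$, gives \eqref{est:novikow}.
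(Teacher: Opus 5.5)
Your proposal is correct and follows essentially the paper's proof. You split each window at the next grid point $\overline{s}=k_n(s)+\frac1n$ and bound the near part by $(n^{-1}\wedge h)\|f_n\|_{L^\infty_T(\mL^\infty)}$ via the second condition in \eqref{con:Gir_gene}. You bound the far part through the Markov property and $\|P_u|f_n|(r)\|_\infty\lesssim u^{-\gamma}\|f_n\|_{L^\infty_T(\mL^{\bbp_0})}$ with $\gamma=\frac12\bba\cdot\frac{d}{\bbp_0}<1$, and you conclude with the conditional Khasminskii bound $\mE_s I_{s,t}^m\le m!\,\alpha^m$ and chaining over windows of fixed length.

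Your extra step of treating $[\overline{s},\overline{s}+\frac1n)$, where $k_n(r)-s=\overline{s}-s$ can be arbitrarily small, with the $L^\infty$ bound is in fact necessary. The inequality $\int_{\overline{s}\wedge t}^t|k_n(r)-s|^{-\gamma}\dif r\lesssim(t-\overline{s})^{1-\gamma}$ quoted in the paper fails uniformly in $s$: for $t=\overline{s}+\frac1n$ the left side equals $\frac1n(\overline{s}-s)^{-\gamma}$. Your fix is exactly what makes $\mE_sI_{s,t}\lesssim osc(t-s)$ hold.
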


\begin{proof}
\eqref{est:novikow-1} follows from the Krylov's estimate \cite[Lemma 4.1]{RZ24} and the similar way of showing \eqref{est:novikow}. Therefore we only need to show \eqref{est:novikow}.

   Without loss of generality we can assume that  $(f_n)_n$ and $c$ are non-negative.  We fix $(s,t)\in[0,1]_\leq^2$ and put $ I_{s,t}:=    \int_s^tf_n(r,M_{k_n(r)}(z))\dif r$.
   Denoting $\overline{s}:=k_n(s)+\frac{1}{n}$, we have 
   \begin{align*}
 I_{s,t}=\big(\int_s^{\overline{s}\wedge t}+\int_{\overline{s}\wedge t}^t\big)f_n(r,M_{k_n(r)}(z))\dif r=:I_1+I_2.
   \end{align*}
   We use \eqref{con:Gir_gene} to have
   \begin{align*}
       \mE_sI_1\lesssim &\sup_{r\in[s,\bar s\wedge t]}\|f_n(r)\|_\infty (\overline{s}\wedge t-s)
       \lesssim (n^{-1}\wedge(t-s))\|f_n\|_\infty.
   \end{align*}
   When $r\ge \overline{s}$, $k_n(r)\ge s$. By Markov property and \eqref{est:E-f-Z}, we have 
   \begin{align*}
       \mE_sI_2=  \int_{\overline{s}\wedge t}^t P_{k_n(r)-s} f_n(r, M_s(z))\dif r
       \lesssim \int_{\overline{s}\wedge t}^t |k_n(r)-s|^{-\bba\cdot\frac{d}{2\bbp_0}} \|f_n\|_{L_T^\infty(\mL^{\bbp_0})}\dif r.
   \end{align*}
   It is straightforward to verify that $\int_{\overline{s}\wedge t}^t |k_n(r)-s|^{-\bba\cdot\frac{d}{2\bbp_0}}dr\lesssim (t-\overline{s})^{1-\bba\cdot\frac{d}{2\bbp_0}}\le (t-{s})^{1-\bba\cdot\frac{d}{2\bbp_0}}$ (for details, see \cite[Lemma 3.10]{LL}).
    For any $h>0$, we denote
   \begin{align*}
       osc(h):=\sup_n(n^{-1}\wedge h)\|f_n\|_\infty+h^{1-\bba\cdot\frac{d}{2\bbp_0}}\sup_n\|f_n\|_{\mL^\infty_T\mL^{\bbp_0}}.
   \end{align*}
   We have shown that there is a finite constant $\kappa$ such that $\E_sI_{s,t}\le \kappa osc(t-s)$ for every $(s,t)\in[0,1]^2_\le$.
   This implies  (see similar arguments in \cite[Theorem 2.3]{Le2022} and \cite[Lemma 3.5]{LL}) that
    for all $m\in\mN$ and $(s,t)\in[0,1]^2_{\le}$,
\begin{align*}
    \mE_sI_{s,t}^m\le m!\kappa^m osc(t-s)^m.
\end{align*}
Using \eqref{con:Gir_gene}, we can choose a $h_0>0$ such that
$
osc(h)\le\tfrac{1}{2c\kappa}
$  for all $h\le h_0$.
Then, we have for any $t-s\le h_0$, by Taylor expansion,
\begin{align*}
    \mE_s\exp(cI_{s,t})\le \sum_{m=0}^\infty 2^{-m}=2,\quad a.s.
\end{align*}
Thus, for $N:=\lfloor h_0^{-1}\rfloor$, we have
\begin{align*}
 &\mE\exp\left(c\int_0^1 f_n(r,M_{k_n(r)})dr\right)\\
 &=\mE\prod_{k=0}^{N-1}\mE_{kh_0}\exp\left(c\int_{kh_0}^{(k+1)h_0} f_n(r,M_{k_n(r)})dr\right)  \mE_{Nh_0}\exp\left(c\int_{Nh_0}^{1} f_n(r,M_{k_n(r)})dr\right)\\
 &\le 2^{N+1}.
\end{align*}
Since $N$ is independent of $n$, this completes the proof.
\end{proof}
  

\section{Proof of main results}\label{sec.proofs}

\medskip


Let $\smooth$ (resp. $C_c, C_b^\infty$)  be the space of smooth compactly supported (resp. continuous compactly supported, bounded smooth with bounded derivatives)  functions on $\R^{2d}$. 
\begin{lemma} \label{lem:app}  Let $\bbr\in[1,\infty)^2$, $\bbr'$ be the H\"older conjugate of $\bbr$ and $f$ be a function in $\LL^\bbr$. Then
\begin{align}\label{id.dense}
    \|f\|_\bbr=\sup_{g\in C_c: \|g\|_{\bbr'}= 1}\int_{\R^{2d}} f(z)g(z)dz=\sup_{g\in\smooth: \|g\|_{\bbr'}= 1}\int_{\R^{2d}} f(z)g(z)dz.
\end{align}
\end{lemma}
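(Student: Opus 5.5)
The plan is to derive \eqref{id.dense} from the classical duality for mixed Lebesgue norms, followed by a density argument. Write $\bbr=(r_x,r_v)$ with $r_x,r_v\in[1,\infty)$, so that $r_x',r_v'\in(1,\infty]$. The inequality ``$\ge$'' in both identities follows from H\"older's inequality applied twice: first in $x$ for fixed $v$, then in $v$. This gives $\int fg\le \|f\|_\bbr\|g\|_{\bbr'}$, and the second supremum is over a smaller class than the first. It therefore suffices to prove that the supremum over $\smooth$ is at least $\|f\|_\bbr$. We may assume $f\neq0$.

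\emph{Step 1 (extremal function).} Put $F(v):=\|f(\cdot,v)\|_{r_x}$. Define
\[
g_0(x,v):=\|f\|_\bbr^{1-r_v}\,F(v)^{r_v-r_x}\,|f(x,v)|^{r_x-1}\operatorname{sgn} f(x,v),
\]
with $g_0(x,v):=0$ wherever $F(v)=0$. A direct computation gives $\int f g_0=\|f\|_\bbr$ and $\|g_0\|_{\bbr'}=1$. The exponents need to be checked in the cases $r_x=1$ or $r_v=1$; there the conjugate exponent is $\infty$, and the formula still produces a bounded factor (a sign function, respectively a constant in $v$), so the identities persist.

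\emph{Step 2 (approximation by compactly supported smooth functions).} This is the main obstacle, because $g_0$ may lie in $\mL^{\bbr'}$ with an infinite exponent, where $\smooth$ is not dense in norm. I avoid norm density and use only that $f$ is integrable against the approximants. First truncate: set $g_N:=g_0\mathbf 1_{|g_0|\le N}\mathbf 1_{|z|\le N}$. Then $|g_N|\le|g_0|$, so $\|g_N\|_{\bbr'}\le1$. Moreover $fg_0\ge0$, so monotone convergence gives $\int fg_N\to\|f\|_\bbr$.

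Next, $g_N$ is bounded with compact support. Take mollifications $g_{N,\eps}:=g_N*\phi_\eps$ with a nonnegative mollifier $\phi_\eps$. Each $g_{N,\eps}$ lies in $\smooth$, and $\|g_{N,\eps}\|_{\bbr'}\le\|g_N\|_{\bbr'}\le 1$ by Minkowski's inequality, since mixed norms are translation invariant. As $\eps\to0$, $g_{N,\eps}\to g_N$ almost everywhere, and the family is uniformly bounded and supported in a fixed compact set $K$.

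\emph{Step 3 (passing to the limit).} Since $r_x,r_v<\infty$, H\"older's inequality on the compact set $K$ shows $f\mathbf 1_K\in L^1$. Dominated convergence then yields $\int fg_{N,\eps}\to\int fg_N$. Normalizing, replace $g_{N,\eps}$ by $g_{N,\eps}/\|g_{N,\eps}\|_{\bbr'}$. Because the norm is at most $1$ and the integral $\int f g_{N,\eps}$ is eventually positive, this only increases the pairing. Letting first $\eps\to0$ and then $N\to\infty$ gives the reverse inequality for the $\smooth$ supremum.

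For the $C_c$ supremum the same argument works, since $\smooth\subset C_c$ and the upper bound comes from H\"older's inequality. Finiteness of $\bbr$ is used only to guarantee $f\in L^1_{loc}$; in fact this holds for any $f\in\LL^\bbr$ with $\bbr\ge1$, so that hypothesis plays only a minor role here.
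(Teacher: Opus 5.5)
Your proof is correct, but the approximation step takes a genuinely different route from the paper's.

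Both arguments start from the same H\"older extremizer; you build $\operatorname{sgn} f$ into it, while the paper keeps $g_f\ge0$. The paper then proceeds in four steps:
\begin{enumerate}
\item It reduces to $f\in C_c$, using density of $C_c$ in $\LL^\bbr$ (this is where $\bbr\in[1,\infty)^2$ enters) and the fact that $f\mapsto\sup_g\int fg$ is $1$-Lipschitz on $\LL^\bbr$.
\item It asserts $g_f\in C_c$ from the uniform continuity of $f$.
\item It replaces $\operatorname{sgn} f$ by a continuous $\psi$ with $|\psi|\le1$ via Lusin's theorem.
\item It passes from $C_c$ to $\smooth$ by uniform approximation on a fixed compact set.
\end{enumerate}
You instead keep $f$ general. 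You truncate $g_0$, using monotone convergence since $fg_0\ge0$, and then mollify. Minkowski's inequality controls every mixed norm, including those with infinite exponents, and dominated convergence via $f\mathbf 1_K\in L^1$ handles the pairing.

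What your route buys is that it never needs the extremizer to be continuous. This is not cosmetic, because the paper's claim $g_f\in C_c$ can fail for $f\in C_c$.
\begin{itemize}
\item If $r_v<r_x$, the factor $\|f(\cdot,v)\|_{r_x}^{r_v-r_x}$ may blow up near $\partial\{v:\|f(\cdot,v)\|_{r_x}>0\}$ faster than $|f|^{r_x-1}$ vanishes. For example, take $f(x,v)=\chi(v)|v|\,h(x|v|^{-M})$ (set to $0$ at $v=0$) with $\chi,h\in C_c$, $\chi(0)\neq0$ and $M$ large. Then $g_f$ is unbounded as $v\to0$ along $x=|v|^Mx_0$, where $h(x_0)\neq0$.
\item If $r_v=1$, the indicator $\mathbf 1_{\{\|f(\cdot,v)\|_{r_x}>0\}}$ can make $g_f$ jump across that boundary.
\end{itemize}
Your truncation--mollification step sidesteps exactly this. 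The paper's route is shorter and avoids the mixed-norm Minkowski inequality, and it does go through when, e.g., $r_v\ge r_x>1$.

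One correction to your closing remark: finiteness of $\bbr$ is not used only to get $f\in L^1_{\rm loc}$. It is essential in Step 1, where the exponents $1-r_v$ and $r_v-r_x$ are meaningless otherwise. Moreover, under the paper's convention that infinite exponents use a pointwise supremum, the identity is false for, e.g., $\bbr=(\infty,\infty)$ and $f=\mathbf 1_{\{0\}}$. This does not affect your proof of the lemma as stated.
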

\begin{proof} 
Since $\smooth$ is dense in $C_c$ with respect to the norm $\|\cdot\|_{\bbr'}$, the second identity in \eqref{id.dense} follows. 
We now show the first identity in \eqref{id.dense}.  We put \[J(f)=\sup_{g\in C_c: \|g\|_{\bbr'}= 1}\int_{\R^{2d}} f(z)g(z)dz.\]
We observe that $C_c$ is dense in $\LL^\bbr$ and  that $f\mapsto J(f)$ is continuous on $\LL^\bbr$. Hence, it suffices to show $\|f\|_\bbr=J(f)$ for $f\in C_c$. We assume without loss of generality that $\|f\|_\bbr\neq0$. Define
    \begin{align}\label{def.gf}
        g_f(x,v)=\|f\|_\bbr^{1-r_v}  \1_{(\|f(\cdot,v)\|_{r_x}>0)}\|f(\cdot,v)\|_{r_x}^{r_v-r_x}|f(x,v)|^{r_x-1}.
    \end{align} 
    Using the fact $f$ is uniformly continuous, it is straightforward to verify that $g_f\in C_c$ and $ \|g_f\|_{\bbr'}\le1$. By Lusin theorem, there is a function $\psi\in C_c$ such that $\|\psi\|_\infty\le 1$, and $\psi=\sgn(f)$ except on a set $E$ such that  $\int_E|fg_f|dz<\varepsilon$. 
    We then write
    \begin{align*}
        \|f\|_\bbr=\int fg_f\sgn(f)dz=\int f g_f\psi dx+\int_E fg_f(\sgn(f)-\psi)dz.
    \end{align*}
    Noting that $g_f\psi\in C_c$ and $\|g_f\psi\|_{\bbr'}\le 1$, we obtain from the above that
    \begin{align*}
        \|f\|_\bbr\le J(f)+2 \varepsilon. 
    \end{align*}
    Since $\varepsilon$ is arbitrary, this implies that $\|f\|_\bbr\le J(f)$. On the other hand, $J(f)\le \|f\|_\bbr$ evidently by H\"older inequality, and hence, we have $\|f\|_\bbr=J(f)$. 
\end{proof}
Throughout the rest of the section,  \cref{ass:main,ass.bn} are always enforced throughout the section.
We fix $t\in(0,1]$ and $\varphi\in \smooth$ such that $\|\varphi\|_\bbq=1$. 
By applying It\^o's formula to $r\to P_{t-r}\varphi(Z_r)$ and $P_{t-r}\varphi(Z^n_r)$, one sees that 
\begin{align*}
    \mE\varphi(Z_t)=\mE P_t\varphi(Z_0)+\mE\int_0^t b(r,Z_r)\cdot \nabla_vP_{t-r}\varphi(Z_r)\dif r,
\end{align*}
and
\begin{align*}
\mE\varphi(Z_t^n)=\mE P_t\varphi(Z_0)+\mE\int_0^t \Gamma_{r-k_n(r)}b_n(r,Z_{k_n(r)}^n)\cdot \nabla_vP_{t-r}\varphi(Z_r^n)\dif r.
\end{align*}
This implies that
\begin{align}
    | \mE\varphi(Z_t)- \mE\varphi(Z^n_t)|&\le \Big|\mE\int_0^t (b-b_n)(r,Z_r)\cdot \nabla_vP_{t-r}\varphi(Z_r)\dif r\Big|\no\\
    &\quad+\Big|\mE\int_0^t b_n(r)\cdot \nabla_vP_{t-r}\varphi(Z_r)-b_n(r)\cdot \nabla_vP_{t-r}\varphi(Z_r^n)\dif r\Big|\no\\
    &\quad+\Big|\mE\int_0^t \big(b_n(r,Z^n_r)-\Gamma_{r-k_n(r)}b_n(r,Z_{k_n(r)}^n)\big)\cdot \nabla_vP_{t-r}\varphi(Z_r^n)\dif r\Big|\no\\
    &=:I_1^n(t)+I_2^n(t)+I_3^n(t).\label{0419:00}
\end{align}
In the following we estimate $I_i^n, i=1,2,3$ separately.

\subsection{Taming error}
We derive estimates for  $I_1^n$ from \eqref{0419:00}.
As an intermediate step, we have the following result.

\begin{lemma}\label{lem.lprho}
  For any $\bar\bbp,\bar\bbq\in[{\bf2},\infty]^2$ with $1/\bar\bbp+1/\bar\bbq\le \1$,  there are constants $C_i>0,i=1,2$, 
  such that for any $0<s\leq t\le 1$ and $f_1\in \mL^{\bar\bbp}$, $f_2\in \mL^{\bar\bbq}$,
\begin{align}\label{est:add-Z}
\mE |f_1(Z_{s})f_2(Z_t)|&\le C_1 s^{-\frac{1}{2}(\bba\cdot\frac{d}{\bar\bbp}+\bba\cdot\frac{d}{\bar\bbq})}\|f_1\|_{\bar\bbp}\|f_2\|_{\bar\bbq},\\
\label{0418:01}
    \mE |f_1(Z^n_{s})f_2(Z^n_t)|&\le C_2 s^{-\frac{1}{2}(\bba\cdot\frac{d}{\bar\bbp}+\bba\cdot\frac{d}{\bar\bbq})}\|f_1\|_{\bar\bbp}\|f_2\|_{\bar\bbq}.
\end{align}
In particular, there is a finite constant $C>0$ such that for any $t\in(0,1]$ 
\begin{align}
\|\rho_t\|_{\bar\bbp'}, \quad \|\rho_t^n\|_{\bar\bbp'}\leq C t^{-\frac{1}{2}(\bba\cdot\frac{d}{\bar\bbp})}\label{est:density-Zn} \twhere1/\bar\bbp'+1/\bar\bbp=(1,1).
\end{align}
\end{lemma}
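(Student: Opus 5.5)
The plan is to reduce both estimates to the free process $M_t(z)$ by Girsanov's theorem, and then use the Markov property of $M$ together with \eqref{est:E-f-Z}.

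\textbf{Step 1: Girsanov.} For the SDE \eqref{eq:SDE}, the law of $Z$ on $[0,1]$ is absolutely continuous with respect to the law of $M_\cdot(z_0)$, $z_0=(\xi,\eta)$. The density is
\[
\mathcal{E}=\exp\Big(\int_0^1 b(r,M_r(z_0))\cdot dW_r-\tfrac12\int_0^1|b(r,M_r(z_0))|^2dr\Big).
\]
By Cauchy--Schwarz,
\[
\mE|f_1(Z_s)f_2(Z_t)|\le (\mE\mathcal{E}^2)^{1/2}\big(\mE|f_1(M_s)f_2(M_t)|^2\big)^{1/2}.
\]
The factor $\mE\mathcal{E}^2$ is bounded by $\mE\exp(c\int_0^1|b|^2(r,M_r)dr)^{1/2}$ via the usual trick, uniformly in $z_0$. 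Finiteness follows from \eqref{est:novikow-1} applied to $f=|b|^2\in L^\infty_T(\mL^{\bbp/2})$, which needs $\bba\cdot\frac{d}{\bbp/2}<2$, i.e. \cref{ass:main}. Novikov's condition holds by the same bound.

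For $Z^n$ the drift is $b_n(r,\Gamma_{r-k_n(r)}Z^n_{k_n(r)})$. Under the reference measure $Z^n$ becomes $M$, so this drift becomes $b_n(r,\Gamma_{r-k_n(r)}M_{k_n(r)})$. Set $\tilde f_n(r,z):=|b_n(r,\Gamma_{r-k_n(r)}z)|^2$. Since $\Gamma$ preserves $\mL^\bbp$ norms and sup norms, $\tilde f_n$ satisfies \eqref{con:Gir_gene} with $\bbp_0=\bbp/2$: the boundedness part comes from \eqref{con:bnbdd}, and the limit part is exactly \eqref{con.bngir} after squaring. Indeed, $(n^{-1}\wedge h)\|b_n\|_\infty^2\le \big((n^{-1/2}\wedge h^{1/2})\|b_n\|_\infty\big)^2$. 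Then \eqref{est:novikow} gives a bound uniform in $n$ and $z_0$.

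\textbf{Step 2: free-process estimate.} The remaining task is to bound $\mE|g_1(M_s)g_2(M_t)|$ with $g_i=|f_i|^2$, and the exponents are then halved. Condition on $\sF_s$ and use the Markov property:
\[
\mE[g_2(M_t)\mid\sF_s]=P_{t-s}g_2(M_s).
\]
A crude bound of $P_{t-s}g_2$ in sup norm would produce a singularity in $t-s$ rather than in $s$. So instead use Hölder in space. Write $\mE[g_1 P_{t-s}g_2(M_s)]=\int g_1(z)P_{t-s}g_2(z)\,\Gamma_s g_s$-density, i.e. the density of $M_s$, which is $g_s(\cdot-\Gamma_sz_0)$. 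Then apply the triple Hölder inequality with exponents $\bar\bbp/2$, $\bar\bbq/2$ and $\bbr$, where $2/\bar\bbp+2/\bar\bbq+1/\bbr=\1$. This is admissible precisely because $1/\bar\bbp+1/\bar\bbq\le \1/2$.

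Caution: the statement only assumes $1/\bar\bbp+1/\bar\bbq\le\1$. If the Cauchy--Schwarz squaring breaks admissibility, replace Cauchy--Schwarz by Hölder with exponents $(1+\epsilon, \text{conjugate})$. With $\epsilon$ small, the Girsanov moment $\mE\mathcal E^{(1+\epsilon)/\epsilon}$ is still finite by \eqref{est:novikow-1}, which holds for every $c$. The exponent lost in the $\mL^p$ integrability is then only a factor close to $1$. Since $\bar\bbp,\bar\bbq\ge\bb2$ and the inequality is not strict, a small loss cannot be absorbed. This is the main obstacle, so I would first try to keep everything exact.

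The exact route works as follows. First, $\|P_{t-s}g_2\|_{\bar\bbq}\le\|g_2\|_{\bar\bbq}$ by Jensen. Second, \eqref{CC03} gives
\[
\|g_s\|_{\bbr}\lesssim s^{-\frac12\bba\cdot d(1-1/\bbr)}=s^{-\frac12\bba\cdot(\frac{d}{\bar\bbp}+\frac{d}{\bar\bbq})},
\]
using the triple Hölder with $1/\bar\bbp+1/\bar\bbq+1/\bbr=\1$ applied directly to $f_1,f_2$. To avoid squaring, the Girsanov step is handled through the Krylov/Kato-class machinery of \cite{RZ24}, which gives density bounds without moment loss. Concretely, I would write $\mE|f_1(Z_s)f_2(Z_t)|=\mE[|f_1(Z_s)|\,\mE_s|f_2(Z_t)|]$. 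The inner conditional expectation is bounded by $\|f_2\|_{\bar\bbq}$ times a transition-density bound for $Z$, obtained from Gaussian-type heat kernel estimates for kinetic SDEs with Kato drifts from \cite{RZ24}. For $Z^n$, the analogous bound comes from the discrete Duhamel/Girsanov argument above.

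\textbf{Step 3: densities.} Finally, \eqref{est:density-Zn} follows by taking $f_2\equiv1$ and $\bar\bbq=\infty$ in \eqref{est:add-Z} and \eqref{0418:01}, and then applying the duality \cref{lem:app}.
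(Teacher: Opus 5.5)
Your Step 1 together with the Cauchy--Schwarz version of Step 2 is exactly the paper's proof. It applies Girsanov with moments of the density bounded uniformly in $n$ and in the starting point by \cref{lem:Kaha}; your check of \eqref{con:Gir_gene} for $|b_n(r,\Gamma_{r-k_n(r)}\cdot)|^2$ with $\bbp_0=\bbp/2$ is the right one. It then uses Cauchy--Schwarz in $\Omega$, the Markov property of $M$, and H\"older in $z$ with \eqref{CC03}.

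You are right that this needs $2/\bar\bbp+2/\bar\bbq\le\1$, a condition the paper passes over. It does suffice for \eqref{est:density-Zn}, where $\bar\bbq=(\infty,\infty)$ and $\bar\bbp\ge(2,2)$.

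You misjudge the $(1+\epsilon)$ alternative, however: it loses nothing. Take $q\in(1,2]$ and define $\bbr$ by $1/\bbr=\1-q(1/\bar\bbp+1/\bar\bbq)$. Then
\[
\mE|f_1(M_s)f_2(M_t)|^q=\int |f_1|^q\,P_{t-s}|f_2|^q\,g_s(\cdot-\Gamma_s z_0)\,dz\le \|f_1\|_{\bar\bbp}^q\|f_2\|_{\bar\bbq}^q\|g_s\|_{\bbr}\lesssim s^{-\frac q2\bba\cdot(\frac{d}{\bar\bbp}+\frac{d}{\bar\bbq})}\|f_1\|_{\bar\bbp}^q\|f_2\|_{\bar\bbq}^q .
\]
After taking the $1/q$-th power the exponent is exactly the claimed one. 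The factor $\|\mathcal{E}\|_{L^{q/(q-1)}(\Omega)}$ stays uniformly bounded because \cref{lem:Kaha} holds for every $c$. So the Girsanov--H\"older route already proves \eqref{est:add-Z} and \eqref{0418:01} whenever $1/\bar\bbp+1/\bar\bbq<\1$ componentwise. That is more than the paper's argument covers, and you should use it instead of dismissing it.

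What your proposal does not prove is the endpoint case, where in some component both $\bar\bbp$ and $\bar\bbq$ equal $2$. The statement includes it, since its constraint $1/\bar\bbp+1/\bar\bbq\le\1$ is automatic for $\bar\bbp,\bar\bbq\ge(2,2)$. No H\"older step against the Girsanov density can reach it. Take $\bar\bbp=\bar\bbq=(2,2)$, $s=t$ and $f_1=f_2=|h|^{1/2}$: the claim then contains the $L^1\to L^\infty$ bound $\rho^n_s\lesssim s^{-2d}$ a.e. But $\mE[\mathcal{E}X]\le\|\mathcal{E}\|_{L^{q'}}\|X\|_{L^q}$ with $q>1$ always costs integrability, and $q=1$ is not available since $\mathcal{E}$ is unbounded.

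For $Z$ you can close the endpoint with the Gaussian heat kernel bounds of \cite{RZ24}, as you suggest; this goes beyond the paper's argument. For $Z^n$, though, your sentence ``the analogous bound comes from the discrete Duhamel/Girsanov argument above'' points to precisely the argument that fails there. You would need a genuine transition-density upper bound for the scheme, uniform in $n$ even though $\|b_n\|_\infty$ may blow up. Neither your proposal nor the paper's proof supplies it.
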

\begin{proof}  
Let us first show \eqref{0418:01}. \eqref{est:add-Z} then follows in a similar and even easier manner. Let  $M_t(z)$  be defined as \eqref{def.GM}.  Let 
\begin{align*} 
   \rho=\rho^n(z):=&\exp\Big(-\int_0^1\Gamma_{s-k_n(s)}b_n(s,M_{k_n(s)}(z))\dif W_s-\frac{1}{2}\int_0^1\big|\Gamma_{s-k_n(s)}b_n(s,M_{k_n(s)}(z))\big|^2\dif s\Big).
\end{align*}
Note that $f_n(t,x):=|\Gamma_{t-k_n(s)}b_n(t,x)|^2$  satisfies \eqref{con:Gir_gene} with $\bbp_0:=\bbp/2$.
Applying \cref{lem:Kaha}, 
 we see that for any $\lambda>0$,
\begin{align*}
\sup_n\sup_z\mE\exp\Big(\lambda\int_0^1\big|\Gamma_{s-k_n(s)}b_n(s,M_{k_n(s)}(z))\big|^2\dif s\Big)<\infty.
\end{align*}
This implies that $\E \rho=1$ and $\sup_n\sup_z \E(\rho)^\lambda<\infty$ for any $\lambda>0$. By Girsanov theorem, we see that
 $Z^n(z)$ shares the same law under $\mP$ as $M(z)$ under $\mQ$, where $\dif \mQ:=\rho\dif \mP.$ 
 In particular, we have 
 \begin{align*}
      \mE |f_1(Z^n_{s})f_2(Z^n_t)|= \E_\mQ|f_1(M_{s})f_2(M_t)|.
 \end{align*}
By H\"older inequality 
\begin{align*}
    \mE |f_1(Z^n_s)f_2(Z^n_t)|&\lesssim \|f_1(M_{s})f_2(M_t)\|_{L^{2}(\Omega)}\\
    &=\left(\int_{\mR^{4d}}|f_1|^{2}(z)|f_2|^{2}(\Gamma_{t-s}z+z')g_{t-s}(z')g_{s}(z)\dif z\dif z'\right)^\frac{1}{2},
\end{align*}
which, upon applying H\"older inequality to the integral with respect to $z$, yields
\begin{align*}
    \mE |f_1(Z^n_{s})f_2(Z^n_t)|\lesssim \|f_1\|_{\bar \bbp}\|f_2\circ\Gamma_{t-s}\|_{\bar\bbq}\|g_{t-s}\|_{\mathbf{1}}\|g_{s}\|_{\bbr}\lesssim  s^{-\bba\cdot \frac{d}{2\bbr}}
\|f_1\|_{\bar\bbp}\|f_2\|_{\bar\bbq}
\end{align*}
where $1/\bbr=1/\bar\bbp+1/\bar\bbq$, which is \eqref{0418:01}.  \eqref{est:density-Zn} is simply obtained by duality and \eqref{est:add-Z}, \eqref{0418:01} with taking $f_1$ therein to be some constant function and $\bar\bbq=(\infty,\infty)$. 
\end{proof}
\begin{remark}\label{rmk.girpara}
    The condition \eqref{con.bngir}, which rules out $\kappa=1/2$ in \eqref{def.bncutoff}, is crucial for the application of Girsanov theorem in the previous proof.  It may be possible to employ parametrix method (as in \cite{JM21}), avoiding usage of the Girsanov theorem, to treat the case of \eqref{def.bncutoff} with $\kappa=1/2$. 
\end{remark}
\begin{lemma}\label{lem:estIn2}
    Recall that  $I_1^n(t)$ is defined in \eqref{0419:00}.
     For any $\bbp\leq\bbq$, $t\in(0,1]$, we have
    \begin{align}\label{0419:04}
    |I_1^n(t)|\lesssim n^{-\vartheta}t^{-\frac12(\bba\cdot\frac{d}{\bbp}+\bba\cdot\frac{d}{\bbq})}\|\varphi\|_\bbq.
\end{align}
\end{lemma}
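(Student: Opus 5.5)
Write $I_1^n(t)=\big|\int_0^t\int_{\R^{2d}}\rho_r(z)\,(b-b_n)(r,z)\cdot\nabla_vP_{t-r}\varphi(z)\,dz\,dr\big|$. The product $(b-b_n)\cdot\nabla_vP_{t-r}\varphi$ is only controlled in a negative Besov space through \eqref{con.bnrate}, and this has to be tested against $\rho_r$. Since the density has no regularity we can use directly, I would not test against $\rho_r$ itself. Instead I would use the Markov property to shift the smoothing onto the distribution.

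Fix a small time $\epsilon\in(0,r)$, for instance $\epsilon=r/2$. Write $Z_r$ through the transition from $r-\epsilon$ and compare it with the free flow. Concretely, $\mE F(Z_r)=\mE P_\epsilon F(Z_{r-\epsilon})+\mE\int_{r-\epsilon}^r b(s,Z_s)\cdot\nabla_vP_{r-s}F(Z_s)\,ds$, with $F:=(b-b_n)(r)\cdot\nabla_vP_{t-r}\varphi$. The first term is bounded by $\|\rho_{r-\epsilon}\|_{\bar\bbp'}\,\|P_\epsilon F\|_{\bar\bbp}$. We have $\|\rho_{r-\epsilon}\|_{\bar\bbp'}$ from \eqref{est:density-Zn}. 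We have $\|P_\epsilon F\|_{\bar\bbp}\lesssim\epsilon^{-\delta/2-\cdots}\|F\|_{\mB^{-\delta}_{\cdot;\bba}}$ from \eqref{est.Ppbes}.

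The core step is the paraproduct estimate for $\|F\|_{\mB^{-\delta,1}}$. Using Bony's decomposition $F=\nabla_vP_{t-r}\varphi\prec(b-b_n)+\nabla_vP_{t-r}\varphi\succcurlyeq(b-b_n)$ and the estimates of \cref{sec:3.2}, I would proceed term by term. The first term is bounded by $\|\nabla_vP_{t-r}\varphi\|_{\infty\text{-type}}\,\|b-b_n\|_{\mB^{-\delta,1}_{\bbp;\bba}}$. The resonant and high terms need $\nabla_vP_{t-r}\varphi\in\mB^{\delta+\epsilon'}$ paired with $b-b_n\in\mB^{-\delta}$. Both are available from \eqref{est.Pbesp}, at the cost of powers $(t-r)^{-(1+\delta)/2-\frac12\bba\cdot(d/\bbq-\cdots)}$. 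Together with \eqref{con.bnrate} this gives a factor $n^{-\vartheta\delta}$.

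This factor is too strong a time singularity unless it is balanced. The plan is to interpolate. I would combine the $n^{-\vartheta\delta}$ bound with the trivial bound (Hölder with \eqref{est:add-Z}, no $n$-gain: $\mE|F(Z_r)|\lesssim r^{-\cdots}\|b-b_n\|_{\bbp}\|\nabla_vP_{t-r}\varphi\|_{\infty-ish}$, with $(t-r)^{-1/2-\cdots}$). I would then choose $\epsilon$ and the splitting threshold (e.g.\ the regime $t-r\lessgtr n^{-2\vartheta}$) so that the singularities stay integrable while the final rate is $n^{-\vartheta}$. Since $\delta>1$, the gain $n^{-\vartheta\delta}$ costs $(t-r)^{-\delta/2}$ in the regime $t-r\ge n^{-2\vartheta}$. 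Integrating $(t-r)^{-(1+\delta)/2}$ over $t-r\ge n^{-2\vartheta}$ produces $n^{\vartheta(\delta-1)}$, hence the net rate $n^{-\vartheta}$. On $t-r<n^{-2\vartheta}$, the crude bound integrates $(t-r)^{-1/2}$ to $n^{-\vartheta}$.

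The drift correction term from the Markov shift, $\int_{r-\epsilon}^r\mE\, b\cdot\nabla_vP_{r-s}F(Z_s)\,ds$, is of the same type as the original term. I would handle it by the same Besov bound on $\nabla_vP_{r-s}F$ via \eqref{est.DPpbes}, with Hölder using $b\in\mL^\bbp$ and \eqref{est:add-Z}. This costs the integrable factor $(r-s)^{-\frac12-\frac\delta2-\frac12\bba\cdot d/\bbp}$. If that fails to be integrable, I would instead iterate or choose $\delta$ closer to $1$.

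The time exponents in $r$ collect into $r^{-\frac12(\bba\cdot d/\bbp+\bba\cdot d/\bbq)}$, which is integrable since $\bba\cdot d/\bbp<1$. After integrating over $r\in(0,t)$ they give the claimed $t^{-\frac12(\bba\cdot\frac d\bbp+\bba\cdot\frac d\bbq)}$, using $t\le1$. The main obstacle will be the paraproduct bookkeeping: choosing the integrability indices in \cref{sec:3.2} so that $\|b-b_n\|_{\mB^{-\delta,1}_{\bbp;\bba}}$ appears exactly, with the semigroup factor absorbing the remaining integrability through $\bbp\le\bbq$.
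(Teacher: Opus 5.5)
Your overall route is the paper's: an It\^o/Duhamel step that moves the smoothing onto the free semigroup, Bony's splitting of $(b-b_n)\cdot\nabla_vP_{t-r}\varphi$, and interpolation between the $n^{-\vartheta\delta}$ bound and a bound without gain. The paper runs Duhamel from time $0$, so its leading term is just $\|P_sh_s\|_\infty$. Your restart at $r-\epsilon$ with $\epsilon=r/2$ also works, at the cost of using $\rho_{r/2}$ and the Markov property.

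\textbf{The drift-correction step fails as you describe it.} Measuring all of $F$ in $\mB^{-\delta}_{\bbp;\bba}$ and applying \eqref{est.DPpbes} produces $(r-s)^{-\frac12-\frac\delta2-\cdots}$. Its exponent exceeds $1$ for every admissible $\delta\in(1,2)$, so it is never integrable. Neither fallback repairs this:
\begin{itemize}
\item Moving $\delta$ towards $1$ keeps the exponent $\ge1$.
\item Going below $1$ (possible by interpolating \eqref{con.bnrate} with $\|b-b_n\|_{\bbp}\lesssim1$) makes it integrable, but yields only $n^{-\vartheta\delta}r^{\frac{1-\delta}2}$. This is worse than $n^{-\vartheta}$ for $r$ of order one. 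Moreover, $\delta>1$ is exactly what makes the $\succcurlyeq$ piece produce $n^{-\vartheta}$.
\item Iterating Duhamel just reproduces $\nabla_vP_{r-s}$ acting on a $\mB^{-\delta}$ function at every level.
\end{itemize}

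\textbf{The fix is to split the drift correction exactly like the main term.} The price of the gain falls on different time variables for the two pieces.

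For $F^\prec=\nabla_vP_{t-r}\varphi\prec(b-b_n)(r)$, \eqref{0426:09} gives the bound $n^{-\vartheta\delta}(t-r)^{-\frac12-\bba\cdot\frac{d}{2\bbq}}$ in $\mB^{-\delta}_{\bbp;\bba}$ and $(t-r)^{-\frac12-\bba\cdot\frac{d}{2\bbq}}$ in $\mB^{0}_{\bbp;\bba}$. Taking the minimum of the two resulting bounds from \eqref{est.DPpbes} gives
\[
\|\nabla_vP_{r-s}F^\prec\|_{\bbp}\lesssim (r-s)^{-\frac12}\Big(\big[(r-s)n^{2\vartheta}\big]^{-\frac\delta2}\wedge1\Big)(t-r)^{-\frac12-\bba\cdot\frac{d}{2\bbq}}.
\]
This is integrable in $s$ and gives $n^{-\vartheta}$ by \eqref{lem:A1}. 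The same interpolation, in the smoothing time $\epsilon$ rather than in $t-r$, is what handles $F^\prec$ in your leading term. Your text attaches the cost $(t-r)^{-\delta/2}$ to both pieces, which is right only for $F^\succcurlyeq$.

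For $F^\succcurlyeq$, do not use a negative space at all. By \eqref{est:Paraconse} it lies in $\mL^{\bbr}$ with $1/\bbr=1/\bbp+1/\bbq$, and its norm is $\lesssim(t-r)^{-\frac12}\big([(t-r)n^{2\vartheta}]^{-\frac\delta2}\wedge1\big)$. So $\nabla_vP_{r-s}$ only costs $(r-s)^{-\frac12-\bba\cdot\frac{d}{2\bbq}}$, and the gain comes from $t-r$.

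Here $\nabla_vP_{t-r}\varphi$ must be measured in $\bbq$-based norms ($\mB^{1,1}_{\bbq;\bba}$ and $\mB^{1+\delta,1}_{\bbq;\bba}$), not in $\mL^\infty$ as in your crude bound. The $\mL^\infty$ norm brings an extra factor $(t-r)^{-\bba\cdot\frac{d}{2\bbq}}$, which would degrade the rate to $n^{-\vartheta(1-\bba\cdot d/\bbq)}$. With $\bbq$-norms, the integrability cost is absorbed by $r^{-\bba\cdot\frac{d}{2\bbr}}=r^{-\frac12(\bba\cdot\frac d\bbp+\bba\cdot\frac d\bbq)}$.
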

\begin{proof}

 For any $s\in(0,t]$, we put 
    $
     h_s:=(b_n(s)-b(s))\cdot\nabla_v P_{t-s}\varphi
    $. 
    Applying It\^o's formula to $r\mapsto P_{s-r} h_s(Z_r)$, we have
\begin{align*}
    \mE h_s(Z_s)=\mE P_s h_s(Z_0)+\mE\int_0^s b(r,Z_{r})\cdot \nabla_v P_{s-r} h_s(Z_r)\dif r,
\end{align*}
which by \eqref{est:add-Z} 
implies that 
\begin{align*}
    |\mE h_s(Z_s)|\lesssim \|P_s h_s\|_\infty+ \int_0^s r^{-\bba\cdot \frac{d}{\bbp}}\|\nabla_v P_{s-r} h_s\|_{\bbp}\dif r.
\end{align*}
We note that $h$ belongs to $L_T^\infty(\mL^\bbp)$, hence, $\E\int_0^1|h_s(Z_s)|ds$ is finite in view of \eqref{est:add-Z}.
Thus, by Fubini and the previous estimate, we have 
\begin{align}
\label{est:Ephib} 
I_1^n(t)
\leq \int_0^t\|P_s h_s\|_\infty\dif s+ \int_0^t\int_0^s r^{-\bba\cdot \frac{d}{\bbp}}\|\nabla_v P_{s-r} h_s\|_{\bbp}\dif r\dif s.
\end{align}

Using Bony' paraproduct (see \cref{sec:3.2}), we decompose $h_s=h^\prec_s+h^\succcurlyeq_s$ where
\begin{align*}
    h^\prec_s:=\nabla_v P_{t-s}\varphi\prec(b_n-b)(s)
    \tand
    h^\succcurlyeq_s:=\nabla_v P_{t-s}\varphi\succcurlyeq(b_n-b)(s).
\end{align*}
We estimate each term in \eqref{est:Ephib} accordingly.
First, by \eqref{0426:09} and \cref{lem.nablabes}, we have for $\alpha\in\{0,-\delta\}$ 
\begin{align*}
     \| h^\prec_s\|_{\bB^{\alpha}_{\bbp;\bba}}
   \lesssim \|\nabla_vP_{t-s}\varphi\|_\infty\|b_n-b\|_{\bB^{\alpha}_{\bbp;\bba}}
   \lesssim \|P_{t-s}\varphi\|_{\bB^{1,1}_{\infty;\bba}}\|b_n-b\|_{\bB^{\alpha}_{\bbp;\bba}}.
\end{align*}
Applying \eqref{est.Pbesp}, we have $\|P_{t-s}\varphi\|_{\bB^{1,1}_{\infty;\bba}}\lesssim (t-s)^{-\frac12-\bba\cdot\frac{d}{2\bbq}}\|\varphi\|_\bbq $.
These estimates imply that 
\begin{align}
   \| h^\prec_s\|_{\bB^{\alpha}_{\bbp;\bba}}
   \lesssim n^{\alpha \vartheta}(t-s)^{-\frac12-\bba\cdot\frac{d}{2\bbq}}\|\varphi\|_\bbq.\label{h^prec2}
\end{align}
Together with \eqref{est.Ppbes}, this implies that 
\begin{align}\label{h^prec1}
    \|P_s h^\prec_s\|_{\mL^\infty}
    \lesssim \min( s^{-\frac\delta2-\bba\cdot\frac{d}{2\bbp}}\| h^\prec_s\|_{\bB^{-\delta}_{\bbp;\bba}},\ s^{-\bba\cdot\frac{d}{2\bbp}}\| h^\prec_s\|_{\bB^{0}_{\bbp;\bba}})\lesssim s^{-\frac12-\bba\cdot\frac{d}{2\bbp}} n^{-\vartheta}(t-s)^{-\frac12-\bba\cdot\frac{d}{2\bbq}}\|\varphi\|_\bbq.
\end{align}
Concerning the term of $ h^\succcurlyeq_s$, we define $\bbr$ via the relation $1/\bbr:=1/\bbp+1/\bbq$, then apply \eqref{est:Paraconse} and \cref{lem.nablabes} to see that
\begin{align*}
    \| h^\succcurlyeq_s\|_{\bbr}
    &\lesssim \min\left(\|\nabla_v P_{t-s}\varphi\|_{\bB^{\delta,1}_{\bbq;\bba}}\|b_n-b\|_{\bB^{-\delta,1}_{\bbp;\bba}},\ \|\nabla_v P_{t-s}\varphi\|_{\bB^{0,1}_{\bbq;\bba}}\|b_n-b\|_{\bbp} \right)
    \\&\lesssim \min\left(\| P_{t-s}\varphi\|_{\bB^{1+\delta,1}_{\bbq;\bba}}\|b_n-b\|_{\bB^{-\delta,1}_{\bbp;\bba}},\ \| P_{t-s}\varphi\|_{\bB^{1,1}_{\bbq;\bba}}\|b_n-b\|_{\bbp} \right).
\end{align*}
We use \eqref{est.Pbesp} and \cref{ass.bn} to estimate each term on the right-hand side above, this and embedding \eqref{AB2} yield 
\begin{align}
    \|h^\succcurlyeq_s\|_{\bB^0_{\bbr;\bba}}
    \lesssim \| h^\succcurlyeq_s\|_{\bbr}
    &\lesssim \|\varphi\|_\bbq(t-s)^{-\frac{1}{2}}\left([(t-s)n^{2\vartheta}]^{-\frac\delta2}\wedge 1 \right).\label{h^succ}
\end{align}
Then, applying \eqref{est.Ppp}, we have
\begin{align*}
    \|P_s h^\succcurlyeq_s\|_{\infty}&\lesssim s^{-\bba\cdot\frac{d}{2\bbr}} \| h^\succcurlyeq_s\|_{\bbr}\lesssim s^{-\bba\cdot\frac{d}{2\bbr}}\|\varphi\|_\bbq(t-s)^{-\frac{1}{2}}\left([(t-s)n^{2\vartheta}]^{-\frac\delta2}\wedge 1 \right).
\end{align*}
Together with \eqref{h^prec1} and \eqref{lem:A1}, we obtain that
\begin{align}
        \int_0^t \|P_s h_s\|_\infty\dif s&\le \int_0^t \|P_s h^\prec_s\|_\infty\dif s+\int_0^t \|P_s h^\succcurlyeq_s\|_\infty\dif s
    \nonumber\\ \nonumber
    &\lesssim \|\varphi\|_{\bbq}\Big(n^{-\vartheta}\int_0^ts^{-\frac12-\bba\cdot\frac{d}{2\bbp}}(t-s)^{-\frac12-\bba\cdot\frac{d}{2\bbq}}\dif s\\\nonumber
    &\qquad+\int_0^t s^{-\bba\cdot\frac{d}{2\bbr}}(t-s)^{-\frac12}\left([(t-s)n^{2\vartheta}]^{-\frac\delta2}\wedge 1 \right)\dif s \Big)\\
    &\lesssim \|\varphi\|_{\bbq}n^{-\vartheta}t^{-\bba\cdot\frac{d}{2\bbr}}.\label{est:Ephib1}
\end{align}
On the other hand, by \eqref{est.DPpbes}, we have
\begin{multline*}
    \|\nabla_v P_{s-r} h_s\|_{\bbp}\le\|\nabla_v P_{s-r} h^\prec_s\|_{\bbp}+\|\nabla_v P_{s-r} h^\succcurlyeq_s\|_{\bbp}\\
    \lesssim \min\left((s-r)^{-\frac{1+\delta}2}\|h^\prec_s\|_{\bB^{-\delta}_{\bbp;\bba}},\ (s-r)^{-\frac12}\|h^\prec_s\|_{\bB^{0}_{\bbp;\bba}}\right)+(s-r)^{-\frac12-\bba\cdot\frac{d}{2\bbq}}\|h^\succcurlyeq_s\|_{\bB^0_{\bbr;\bba}},
\end{multline*}
which, by \eqref{h^prec2}, \eqref{h^succ}, implies that
\begin{align*}
   \|\nabla_v P_{s-r} h_s\|_{\bbp}
   &\lesssim\|\varphi\|_\bbq(s-r)^{-\frac12}(t-s)^{-\frac12-\bba\cdot\frac{d}{2\bbq}}\left(\left([(s-r)n^{2\vartheta}]^{-\frac\delta2}\right]\wedge 1 \right)\\
   &\quad+\|\varphi\|_\bbq(s-r)^{-\frac12-\bba\cdot\frac{d}{2\bbq}}(t-s)^{-\frac{1}{2}}\left([(t-s)n^{2\vartheta}]^{-\frac\delta2}\wedge 1 \right).
\end{align*}
Using the previous estimate and \eqref{lem:A1}, we have
\begin{align}\label{est:Ephib2}
    &\int_0^t\int_0^sr^{-\bba\cdot \frac{d}{\bbp}}\|\nabla_v P_{s-r} h_s\|_{\bbp}\dif r\dif s\no\\
    &\lesssim \|\varphi\|_\bbq\Big(\int_0^t\int_0^sr^{-\bba\cdot \frac{d}{\bbp}}(s-r)^{-\frac12}(t-s)^{-\frac12-\bba\cdot\frac{d}{2\bbq}}\left(\left([(s-r)n^{2\vartheta}]^{-\frac\delta2}\right]\wedge1\right) \dif r\dif s\no\\
    &\quad+\int_0^t\int_0^s r^{-\bba\cdot \frac{d}{\bbp}}(s-r)^{-\frac12-\bba\cdot\frac{d}{2\bbq}}(t-s)^{-\frac12}\left([(t-s)n^{2\vartheta}]^{-\frac\delta2}\wedge 1 \right)\dif r\dif s\Big)\no\\
    &\lesssim \|\varphi\|_\bbq\Big(n^{-\vartheta}\int_0^ts^{-\bba\cdot\frac{d}{\bbp}}(t-s)^{-\frac12-\bba\cdot\frac{d}{2\bbq}}\dif s
    \no\\&\quad
    +\int_0^ts^{\frac12-\bba\cdot \frac{d}{\bbp}-\bba\cdot \frac{d}{2\bbq}}(t-s)^{-\frac12}\left([(t-s)n^{2\vartheta}]^{-\frac\delta2}\wedge 1 \right)\dif s\Big)\no\\\
    &\lesssim \|\varphi\|_\bbq n^{-\vartheta} t^{\frac12-\bba\cdot \frac{d}{\bbp}-\bba\cdot \frac{d}{2\bbq}}.
\end{align}
Plugging   \eqref{est:Ephib1} and \eqref{est:Ephib2} into \eqref{est:Ephib}, we obtain \eqref{0419:04}.
\end{proof}


\subsection{Quadrature error}
To estimate the quadrature error, the term $I^n_3$ in \eqref{0419:00}, we will make use of the following results.
\begin{lemma}\label{lem.technical}
    For any 
    $\bbq\in[1,\infty]^2$ with $1/\bbp+1/\bbq\le {\bf1}$, then 
    there is a constant $C>0$ such that 
    for all $t\in(n^{-1},1)$, $f\in C_b^\infty$ 
    and $h\in \mL^{\bbp}$,
    \begin{align}\label{0416:00}
         &  \left|\mE  h(Z^n_{k_n(t)})\Big(\Gamma_{t-k_n(t)}f(Z^n_{k_n(t)})-f(Z^n_t)\Big)\right|
      \nonumber\\  &\le Cn^{-1}\|h\|_{\bbp}\Big(
      (k_n(t))^{-\bba\cdot \frac{d}{\bbp}}\left(n^{-1}\|\nabla_x f\|_{\mL^\infty}+\|\nabla_v f\|_{\mL^\infty}\right)     
      \nonumber\\&\qquad\qquad\qquad\qquad\qquad\qquad+(k_n(t))^{-\frac12(\bba\cdot \frac{d}{\bbp}+\bba\cdot \frac{d}{\bbq})}(\|f\|_{\bB^{2}_{\bbq;\bba}}+\|\nabla_v^2 f\|_{\mL^\bbq})\Big).
    \end{align}
\end{lemma}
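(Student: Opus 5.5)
We condition on $\sF_{k_n(t)}$ and exploit the one-step structure of the scheme. Put $s:=k_n(t)$, $\tau:=t-s\in[0,n^{-1})$ and $Z^n_s=(X^n_s,V^n_s)$. From \eqref{eq:SDE-EM}, on $[s,t]$ we have
\[
Z^n_t=\Gamma_\tau Z^n_s+\big(\textstyle\int_s^t(t-r)\beta_r\dif r,\ \int_s^t\beta_r\dif r\big)+(G_t-G_s\text{-increment}),
\qquad \beta_r:=b_n(r,X^n_s+(r-s)V^n_s,V^n_s).
\]
Here $\beta_r$ is $\sF_s$-measurable, and the Gaussian increment $\tilde G_\tau:=(\int_s^t(W_r-W_s)\dif r,\,W_t-W_s)$ is independent of $\sF_s$ with density $g_\tau$. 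Note that $\Gamma_\tau f(z)=f(\Gamma_\tau z)$, so the bracket in \eqref{0416:00} is
\[
f(\Gamma_\tau Z^n_s)-f(Z^n_t)=\big[f(\Gamma_\tau Z^n_s)-f(\Gamma_\tau Z^n_s+\tilde G_\tau)\big]+\big[f(\Gamma_\tau Z^n_s+\tilde G_\tau)-f(Z^n_t)\big]=:A+B.
\]

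\emph{Drift part $B$.} By the mean value theorem,
\[
|B|\le \|\nabla_xf\|_\infty\int_s^t(t-r)|\beta_r|\dif r+\|\nabla_vf\|_\infty\int_s^t|\beta_r|\dif r\lesssim \big(n^{-1}\|\nabla_xf\|_\infty+\|\nabla_vf\|_\infty\big)\int_s^t|\beta_r|\dif r.
\]
Then
\[
\mE|h(Z^n_s)|\,|\beta_r|=\mE\,|h|(Z^n_s)\,|b_n(r,\cdot)|\big(\Gamma_{r-s}Z^n_s\big),
\]
and we apply \eqref{0418:01} with both functions evaluated at the same time $s$ (the proof of \eqref{0418:01} via Girsanov and H\"older works equally at equal times, or one views this as the case $s=t$). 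Since $\|b_n(r)\circ\Gamma_{r-s}\|_{\bbp}=\|b_n(r)\|_\bbp\le\kappa_b$, this gives the bound $s^{-\bba\cdot\frac d\bbp}\|h\|_\bbp$. Integrating over $[s,t]$ produces the factor $n^{-1}$, which is the first term of \eqref{0416:00}.

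\emph{Noise part $A$.} Conditioning on $\sF_s$ and using independence,
\[
\mE[A\,|\,\sF_s]=f(\Gamma_\tau Z^n_s)-(g_\tau*f)(\Gamma_\tau Z^n_s)=:F(\Gamma_\tau Z^n_s),\qquad F:=f-g_\tau*f.
\]
We Taylor expand $f$ to second order around the point $\Gamma_\tau Z^n_s$. The first-order terms vanish because $\tilde G_\tau$ is centered. For the second-order terms we use the covariance of $\tilde G_\tau$ (of order $\tau^3,\tau^2,\tau$ in the $xx$, $xv$, $vv$ blocks respectively). This yields
\[
F=\int_0^1(1-\theta)\,\mE\big[\nabla^2 f(\cdot+\theta\tilde G_\tau)\tilde G_\tau^{\otimes2}\big]\dif\theta,
\]
so heuristically $\|F\|_\bbq\lesssim\tau\big(\|\nabla_v^2f\|_\bbq+\text{mixed terms}\big)$.

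The mixed $x$-derivatives are exactly what must be controlled through $\|f\|_{\bB^2_{\bbq;\bba}}$. A cleaner route is to write $F=-\int_0^\tau\partial_r(g_r*f)\dif r$, and split $\partial_r g_r*f$ via the Kolmogorov generator into $\Delta_v$ and $r\nabla_x$-type pieces. Alternatively, use the Littlewood--Paley blocks $\cR^\bba_j$ with Bernstein \eqref{Ber}, bounding each block by $\min(1,2^{2j}\tau)$ and summing. Either way the target is
\[
\|f-g_\tau*f\|_\bbq\lesssim\tau\big(\|f\|_{\bB^2_{\bbq;\bba}}+\|\nabla^2_vf\|_\bbq\big).
\]
Then $\mE|h(Z^n_s)F(\Gamma_\tau Z^n_s)|$ is bounded by \eqref{0418:01} (again at equal times, with $F\circ\Gamma_\tau\in\mL^\bbq$ of the same norm) by $s^{-\frac12(\bba\cdot\frac d\bbp+\bba\cdot\frac d\bbq)}\|h\|_\bbp\|F\|_\bbq$. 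Since $\tau\le n^{-1}$, this gives the second term of \eqref{0416:00}.

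The main obstacle is this last estimate. The naive Taylor bound requires $\|\nabla_x f\|_\bbq$ and $\nabla_x\nabla_v f$, which are not available. The $x$-directions carry anisotropic weight $3$, so we must exploit that the $x$-variance $\tau^3$ exactly compensates, which is why the Besov norm $\bB^2_{\bbq;\bba}$ (a $\tau^{\alpha/2}$-type modulus bound $\|f-g_\tau*f\|\lesssim\tau\|f\|_{\bB^2}$) is the natural tool. I would first try proving it via the blockwise Bernstein estimate, which is the standard proof of the heat-semigroup modulus estimate, carried out here for the kinetic Gaussian $g_\tau$.
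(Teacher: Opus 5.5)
Your reduction is the same as the paper's, and it is correct: the noise/drift splitting of $f(\Gamma_\tau Z^n_s)-f(Z^n_t)$, the mean-value bound with the equal-time case of \eqref{0418:01} for the drift part, and the reduction of the noise part to bounding $\|f-g_\tau*f\|_{\bbq}$.

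\textbf{The gap.} That last bound, which you rightly call the crux, is never proved, and the route you would try first cannot prove it. Bounding each block by $\min(1,2^{2j}\tau)\|\cR^\bba_jf\|_{\bbq}\le\min(2^{-2j},\tau)\|f\|_{\bB^2_{\bbq;\bba}}$ and summing gives $\sum_{j\ge0}\min(2^{-2j},\tau)\asymp\tau(1+|\log\tau|)$. This is a logarithmic loss, and it is not an artifact of the method. The estimate $\|f-g_\tau*f\|_{\bbq}\lesssim\tau\|f\|_{\bB^2_{\bbq;\bba}}$ that you call natural is false in general. Anisotropic order $2$ is exactly the order of $\nabla_v^2$, so $\bB^{2}_{\bbq;\bba}=\bB^{2,\infty}_{\bbq;\bba}$ sits at the endpoint where it no longer controls second-order $v$-behaviour at rate $|v'|^2$. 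This is the same phenomenon as Zygmund functions failing to be Lipschitz; consider, e.g., $f=\phi(x)\psi(v)$ with $\psi\in B^2_{q,\infty}(\mR^d)\setminus W^{2,q}(\mR^d)$. That is precisely why $\|\nabla_v^2f\|_{\mL^\bbq}$ appears on the right-hand side of \eqref{0416:00}, and your sketch never says where it enters. Your alternative via $\partial_r(g_r*f)$ has the same problem: its $\Delta_v$-piece is only $O(\log(1/r))\|f\|_{\bB^2_{\bbq;\bba}}$ unless you bound it by $\|\Delta_vf\|_{\mL^\bbq}$.

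\textbf{The missing idea.} Treat the two directions differently instead of Taylor-expanding in all of $z'$. Write $\delta_{(x',v')}f=\big[\delta_{(x',v')}f-\delta_{(0,v')}f\big]+\delta_{(0,v')}f$.

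The $x$-shift is subcritical, since one $x$-derivative has anisotropic weight $3>2$. So no linear correction is needed: \eqref{xy-difference} with $s=2/3<1$ gives $\|\delta_{(x',v')}f-\delta_{(0,v')}f\|_{\bbq}\lesssim|x'|^{2/3}\|f\|_{\bB^2_{\bbq;\bba}}$, with no logarithm.

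For the $v$-increment, subtract $v'\cdot\nabla_vf$, which integrates to zero against the centered density $g_\tau$. Taylor's formula then gives $\|\delta_{(0,v')}f-v'\cdot\nabla_vf\|_{\bbq}\le|v'|^2\|\nabla_v^2f\|_{\bbq}$.

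Integrating against $g_\tau$ and using $\int|z'|_\bba^2g_\tau(z')\,dz'\lesssim\tau\le n^{-1}$ yields the second term of \eqref{0416:00}; this is the paper's argument.

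Your block approach can also be repaired by cutting at $2^{2J}\approx\tau^{-1}$. Taylor-expand the low-frequency part $S^\bba_Jf$, using $\|\nabla_v^2S^\bba_Jf\|_{\bbq}\lesssim\|\nabla_v^2f\|_{\bbq}$ for the $vv$-term and Bernstein for the $xx$- and $xv$-terms, which are geometric and give $O(\tau)$. Then bound the high-frequency tail by $2^{-2J}\|f\|_{\bB^2_{\bbq;\bba}}\approx\tau\|f\|_{\bB^2_{\bbq;\bba}}$.
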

\begin{proof}
We can assume that the norms of $f$ which appears in the right-hand of \eqref{0416:00} are finite.
  We define
 \begin{align*}
     A_n(t,z):=\int_{k_n(t)}^t\int_{k_n(t)}^s\Gamma_{r-k_n(r)}b_n(r,z)\dif r\dif s,\ \ \quad \ \ B_n(t,z):=\int_{k_n(t)}^t\Gamma_{s-k_n(s)}b_n(s,z)\dif s.
 \end{align*}
Note that
\begin{align*}
&Z^n_t-Z^n_{k_n(t)}=\left(\int_{k_n(t)}^t V^n_r\dif r,B_n(t,Z^n_{k_n(t)})+W_t-W_{k_n(t)}\right)\no\\
&=\Big((t-k_n(t))V^n_{k_n(t)}+A_n(t,Z^n_{k_n(t)})+\int_{k_n(t)}^t(W_s-W_{k_n(t)})\dif s, B_n(t,Z^n_{k_n(t)})+W_t-W_{k_n(t)}\Big).
\end{align*}
Since $\Big(W_t-W_{k_n(t)},\int_{k_n(t)}^t(W_s-W_{k_n(t)})\dif s\Big)\stackrel{(d)}{=}\Big(W_{t-k_n(t)},\int_0^{t-k_n(t)}W_s\dif s\Big)=G_{t-k_n(t)}$, which is independent of $Z^n_{k_n(t)}$, recalling that $G_t$ and $Z^n_t$ have densities $g_t$ and $\rho^n_t$, respectively, we have
\begin{align*}
    &\sI_n(t):=\left|\mE h(Z^n_{k_n(t)})\Big(\Gamma_{t-k_n(t)}f(Z^n_{k_n(t)})-f(Z^n_t)\Big)\right|\\
    &=\Big|\int_{\mR^{4d}}\Big(f(x+(t-k_n(t)v,v)-f(x+(t-k_n(t)v)+A_n(t,z)+x',v+B_n(t,z)+v')\Big)\\
     &\qquad\qquad\qquad\qquad\qquad\qquad h(z)g_{t-k_n(t)}(z')\rho^n_{k_n(t)}(z)\dif z\dif z'\Big|,
\end{align*}
where $z=(x,v)$ and $z'=(x',v')$ in the integral.
We note that
\begin{align*}
    &\quad f(x+(t-k_n(t)v,v)-f(x+(t-k_n(t)v)+A_n(t,z)+x',v+B_n(t,z)+v')\\
    &=-\delta_{(A_n(t,z),B_n(t,z))}f(x+(t-k_n(t)v)+x',v+v')-\delta_{(x',v')}f(x+(t-k_n(t))v,v),
\end{align*}
where $\delta_{(x',v')}f(x,v):=f(x+x',v+v')-f(x,v)$.
 We note that
\begin{align*}
    |\delta_{(A_n(t,z),B_n(t,z))}f(x+(t-k_n(t)v)+x',v+v')|&\le |A_n(t,z)|\|\nabla _xf\|_{\mL^\infty}+|B_n(t,z)|\|\nabla_vf\|_{\mL^\infty},
\end{align*}
which implies that
\begin{align*}
    \sI_n(t)\le&\int_{\mR^{4d}}|h(z)|(|A_n(t,z)|\|\nabla _xf\|_{\mL^\infty}+|B_n(t,z)|\|\nabla_vf\|_{\mL^\infty})g_{t-k_n(t)}(z')\rho^n_{k_n(t)}(z)\dif z\dif z' \\
    &+\Big|\int_{\mR^{4d}}\Big(\delta_{(x',v')}f(x+(t-k_n(t))v,v)\Big)h(z)g_{t-k_n(t)}(z')\rho^n_{k_n(t)}(z)\dif z\dif z'\Big|\\
    =:&\sI^1_n(t)+\sI^2_n(t).
\end{align*}
Note that by \eqref{con:bnbdd},
\begin{align*}
    \|A_n(t)\|_{\bbp}\lesssim \|b_n\|_\bbp\int^t_{k_n(t)}\int^s_{k_n(s)}dr ds\le n^{-2}\quad \text{and}\quad \|B_n(t)\|_{\bbp}\lesssim n^{-1}.
\end{align*}
By H\"older inequality and heat kernel estimate of $\rho^n_t$, \eqref{est:density-Zn}, we have
\begin{align*}
    \sI^1_n(t)&\lesssim \|h\|_{\bbp}(\|A_n(t)\|_{\bbp}\|\nabla_x f\|_{\mL^\infty}+\|B_n(t)\|_{\bbp}\|\nabla_v f\|_{\mL^\infty})\|g_{t-k_n(t)}\|_1\|\rho^n_{k_n(t)}\|_{(\bbp/2)'}\\
    &\lesssim (k_n(t))^{-\bba\cdot\frac{d}{\bbp}}\|h\|_{\bbp}(n^{-2}\|\nabla_x f\|_{\mL^\infty}+n^{-1}\|\nabla_v f\|_{\mL^\infty}).
\end{align*}
For $\sI^2_n(t)$, it follows from \eqref{xy-difference}  that for any $z':=(x',v')\in\mR^{2d}$,
\begin{align*}
    \|\delta_{(x',v')}f-v'\nabla_v f\|_{\mL^{\bbq}}&\lesssim \|\delta_{(x',v')}f-\delta_{(0,v')}f\|_{\mL^{\bbq}}+\|\delta_{(0,v')}f-v'\nabla_v f\|_{\mL^{\bbq}}
    \\&\lesssim|x'|^{\frac23}\|f\|_{\bB^{2}_{\bbq;\bba}}+|v'|^2\|\nabla_v^2 f\|_{\mL^\bbq}\\
    &\lesssim |z'|_\bba^2(\|f\|_{\bB^{2}_{\bbq;\bba}}+\|\nabla_v^2 f\|_{\mL^\bbq}).
\end{align*}
Then by the symmetry, H\"older inequality 
we have 
\begin{align*}
    \sI^2_n(t)&=\Big|\int_{\mR^{4d}}\Big(\delta_{(x',v')}f(x+(t-k_n(t))v,v)-v'\cdot\nabla_vf(x+(t-k_n(t))v,v)\Big)\\
    &\qquad\qquad\qquad\qquad\qquad\qquad h(z)g_{t-k_n(t)}(z')\rho^n_{k_n(t)}(z)\dif z\dif z'\Big|\\
    &=\Big|\int_{\mR^{4d}}\Big(\delta_{(x',v')}f(x,v)-v'\cdot\nabla_vf(x,v)\Big)g_{t-k_n(t)}(z')(h\rho^n_{k_n(t)})(\Gamma_{k_n(t)-t}z)\dif z\dif z'\Big| \\
    &\lesssim \|h\|_{\bbp}(\|f\|_{\bB^{2}_{\bbq;\bba}}+\|\nabla_v^2 f\|_{\mL^\bbq})\|\rho^n_{k_n(t)}\|_{\bbr}\int_{\mR^d}|z'|_\bba^2g_{t-k_n(t)}(z')\dif z'\\
    &\lesssim (k_n(t))^{-\frac{1}{2}(\bba\cdot\frac{d}{\bbp}+\bba\cdot\frac{d}{\bbq})}\|h\|_{\bbp}n^{-1}(\|f\|_{\bB^{2}_{\bbq;\bba}}+\|\nabla_v^2 f\|_{\mL^\bbq}),
\end{align*}
where $1/\bbr=1-1/\bbp-1/\bbq$.
    This completes the proof.
\end{proof}
\begin{lemma}     \label{lem:I3}
    For and $f\in \LL^\bbp$ and any $2/n\le s\le t\le 1$  
    \begin{align}
        \label{est:I3n-6.31move}
      |\mE f(Z^n_t)-\mE \Gamma_{t-s}f(Z^n_s)|\lesssim    \|f\|_{\bbp}|t-s|^{\frac12} s^{-\frac12-\bba\cdot\frac{d}{2\bbp}}.  
    \end{align}
\end{lemma}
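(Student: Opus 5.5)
The plan is a Duhamel expansion with respect to the free semigroup, started from an intermediate time $s':=s/2$ rather than from $s$. The condition $s\ge 2/n$ gives $s'\ge 1/n$ and $k_n(r)\gtrsim s$ for all $r\ge s'$, so every heat-kernel factor in \eqref{0418:01} costs at most a power of $s$. Writing $b^n_r:=\Gamma_{r-k_n(r)}b_n(r,Z^n_{k_n(r)})$, Itô's formula applied to $r\mapsto P_{t-r}f(Z^n_r)$ on $[s',t]$ and to $r\mapsto P_{s-r}\Gamma_{t-s}f(Z^n_r)$ on $[s',s]$ gives (first for smooth $f$, then by density)
\begin{align*}
\mE f(Z^n_t)-\mE\Gamma_{t-s}f(Z^n_s)
&=\mE\big(P_{t-s'}f-P_{s-s'}\Gamma_{t-s}f\big)(Z^n_{s'})
+\mE\int_s^t b^n_r\cdot\nabla_vP_{t-r}f(Z^n_r)\dif r\\
&\quad+\mE\int_{s'}^s b^n_r\cdot\big(\nabla_vP_{t-r}f-\nabla_vP_{s-r}\Gamma_{t-s}f\big)(Z^n_r)\dif r
=:J_1+J_2+J_3 .
\end{align*}

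For $J_1$, bound it by $\|P_{t-s'}f-P_{s-s'}\Gamma_{t-s}f\|_\infty\|\rho^n_{s'}\|_1$, since $\|\rho^n_{s'}\|_1=1$. Then apply \eqref{est:semi-P-L} with target index $(\infty,\infty)$, source index $\bbp$, $k=0$, $\delta=\frac12$, and the pair of times $(s-s',t-s')$, whose difference is $t-s$. This gives $\|f\|_\bbp|t-s|^{1/2}(s/2)^{-\frac12-\bba\cdot\frac{d}{2\bbp}}$, which is exactly the target.

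For $J_2$, note that $b^n_r$ is the function $\Gamma_{r-k_n(r)}b_n(r)\in\LL^\bbp$ evaluated at $Z^n_{k_n(r)}$. So \eqref{0418:01} applies with $f_1=\Gamma_{r-k_n(r)}b_n(r)$ and $f_2=\nabla_vP_{t-r}f$, both in $\LL^\bbp$; this is allowed since $\bbp\ge\mathbf 2$. Using \eqref{est.DPpbes} (or Young's inequality with $\|\nabla_v g_u\|_1\lesssim u^{-1/2}$) one gets $\|\nabla_vP_{t-r}f\|_\bbp\lesssim(t-r)^{-1/2}\|f\|_\bbp$. Hence
\[|J_2|\lesssim s^{-\bba\cdot\frac d\bbp}\|f\|_\bbp\int_s^t(t-r)^{-1/2}\dif r\lesssim s^{-\bba\cdot\frac d\bbp}|t-s|^{1/2}\|f\|_\bbp .\]
This suffices because $\bba\cdot\frac d\bbp<1$ implies $s^{-\bba\cdot\frac d\bbp}\le s^{-\frac12-\bba\cdot\frac{d}{2\bbp}}$.

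The main obstacle is $J_3$. Applying \eqref{est:semi-P-L} with $k=1$ and $\delta=\frac12$ pointwise in $r$ produces $(s-r)^{-1}$, which is not integrable. Lowering $\delta$ loses a power of $t-s$, and splitting into cases produces a logarithm. The way around this is to avoid the pointwise-in-$r$ bound and go back to the representation from the proof of \cref{lem002}:
\[\nabla_vP_{t-r}f-\nabla_vP_{s-r}\Gamma_{t-s}f=\int_{s-r}^{t-r}\nabla_v\partial_u(\Gamma_ug_u)*\Gamma_{t-r}f\dif u,\]
together with $\|\nabla_v\partial_u(\Gamma_ug_u)\|_{\mathbf 1}\lesssim u^{-3/2}$ from \eqref{CC03+}. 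Combined with \eqref{0418:01}, this gives
\[|J_3|\lesssim s^{-\bba\cdot\frac d\bbp}\|f\|_\bbp\int_{s'}^s\big((s-r)^{-1/2}-(t-r)^{-1/2}\big)\dif r .\]
The $r$-integral equals $2\big[(s-s')^{1/2}-(t-s')^{1/2}+(t-s)^{1/2}\big]\le2(t-s)^{1/2}$, so there is no logarithmic loss. As with $J_2$, this gives the required bound, and summing $J_1,J_2,J_3$ proves \eqref{est:I3n-6.31move}.
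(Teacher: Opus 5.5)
Your proof is correct. It differs from the paper's in where the Duhamel expansion starts.

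The paper applies It\^o's formula from time $0$, so its remainder term runs over all of $[0,s]$. On the first step $[0,n^{-1}]$ we have $k_n(r)=0$, and \eqref{0418:01} gives nothing there. The paper therefore treats that step separately, using the sup-norm taming bound \eqref{con.taming} together with $\zeta\le\frac12$ and $n^{-1}\le s/2$.

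You restart the expansion at $s'=s/2\ge n^{-1}$. This moves the whole initial region into the boundary term $J_1$, which needs only $\|\rho^n_{s'}\|_1=1$ and the $(\infty,\bbp)$ case of \eqref{est:semi-P-L}. Your argument therefore uses only \eqref{con:bnbdd} and the uniform two-time bound \eqref{0418:01} (which rests on \eqref{con.bngir}). It does not need \eqref{con.taming} or the restriction on $\zeta$. The paper's route buys only uniformity with its other expansions from time $0$, which it needs anyway for small $t$.

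Your $J_2$ is the paper's $H_2$. Your $J_3$ is the paper's treatment of $[n^{-1},s]$.

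One correction to your stated motivation for $J_3$: pointwise-in-$r$ bounds do not force a logarithm. The paper takes the minimum of the $\delta=0$ and $\delta=1$ cases of \eqref{est:semi-P-L}, namely $(s-r)^{-1/2}\min\big((t-s)(s-r)^{-1},1\big)$. It then integrates this with \eqref{lem:A1}, where $\gamma=1>\frac12$, and obtains $(t-s)^{1/2}$ with no loss. Your exact evaluation of $\int_{s-r}^{t-r}u^{-3/2}\,du$ is the same estimate computed precisely.

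Your ``by density'' step needs a little more care when a component of $\bbp$ is infinite, since $C_c^\infty$ is then not dense in $\LL^\bbp$. There you need the paper's chain of approximations: first bounded continuous approximants, then a.e.\ limits with dominated convergence, then truncation.
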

\begin{proof}
   We consider first the case  $f\in C_c^\infty$. 
We apply It\^o's formula to $r\to P_{t-r}f(Z^n_r)$, using the fact that $\p_r P_rf=(\Delta_v+v\cdot \nabla_x)P_rf$, to obtain that
\begin{align*}
    \mE f(Z^n_t)=\mE P_{t}f(Z_0)+\mE\int_0^t \Gamma_{r-k_n(r)}b_n(r,Z^n_{k_n(r)})\cdot \nabla_v P_{t-r}f(Z^n_r)\dif r.
\end{align*}
For any $0<s<t$, by replacing $f$ with $\Gamma_{t-s}f$, we also have 
\begin{align*}
    \mE \Gamma_{t-s}f(Z^n_{s})=\mE P_{s}\Gamma_{t-s}f(Z_0)+\mE\int_0^{s} \Gamma_{r-k_n(r)}b_n(r,Z^n_{k_n(r)})\cdot \nabla_v P_{s-r}\Gamma_{t-s}f(Z^n_r)\dif r.
\end{align*}
Hence, we have
\begin{align}\label{eq:I-123}
    &|\mE f(Z^n_t)-\mE \Gamma_{t-s}f(Z^n_{s})|
    \\&\le \|P_{t}f-P_{s}\Gamma_{t-s}f\|_\infty+\mE\int_s^t |\Gamma_{r-k_n(r)}b_n(r,Z^n_{k_n(r)})\cdot \nabla_v P_{t-r}f(Z^n_r)|\dif r
    \nonumber\\
    &\quad+\mE\int_0^{s}|\Gamma_{r-k_n(r)}b_n(r,Z^n_{k_n(r)})\nabla_v(P_{t-r}-P_{s-r}\Gamma_{t-s})f(Z^n_r)|\dif r
    =:H_1+H_2+H_3.\nonumber
\end{align}
For $H_1$, taking $(\delta,k,\bbp,\bbp')=(\frac12,0,\infty,\bbp)$ in \eqref{est:semi-P-L}, we have 
\begin{align}
    \|P_{t}f-P_{s}\Gamma_{t-s}f\|_\infty\lesssim\|f\|_{\bbp}|t-s|^{\frac12} s^{-\frac12-\bba\cdot\frac{d}{2\bbp}}.
\end{align}

Applying \eqref{0418:01}, \eqref{est.DPpbes}, \eqref{AB2} and the fact that $k_n(r)\ge k_n(s)\ge s/2$,  it follows that
\begin{align}
    H_2&\lesssim \|b_n\|_{L_T^\infty(\mL^{\bbp})}\int_s^t (k_n(r))^{-\bba\cdot \frac{d}{\bbp}} \|\nabla_v P_{t-r}f\|_{\bbp}\dif r
    \nonumber\\&\lesssim s^{-\bba\cdot \frac{d}{\bbp}}\|f\|_{\bB^0_{\bbp;\bba}}\int_s^t (t-r)^{-\frac{1}{2}}\dif r\lesssim s^{-\bba\cdot \frac{d}{\bbp}}|t-s|^{\frac12}\|f\|_{\bbp}.\nonumber
\end{align}
To estimate $H_3$, we decompose the integral on $[0,s]$ into regions $[0,n^{-1}]$ and $[n^{-1},s]$. Applying \eqref{est:semi-P-L} and \eqref{con.taming},  we have 
\begin{align*}
    &\mE\int_0^{n^{-1}}|\Gamma_{r-k_n(r)}b_n(r,Z^n_{k_n(r)})\nabla_v(P_{t-r}-P_{s-r}\Gamma_{t-s})f(Z^n_r)|\dif r
    \\&\le \sup_{t\in[0,n^{-1}]}\|b_n(t,\cdot)\|_\infty\int_0^{n^{-1}}\mE|\nabla_v(P_{t-r}-P_{s-r}\Gamma_{t-s})f|(Z^n_r)\dif r
    \\&\lesssim \|f\|_{\bbp}|t-s|^{\frac12} n^\zeta\int_0^{n^{-1}}(s-r)^{-1}r^{-\bba\cdot\frac{d}{2\bbp}}\dif r
    \lesssim |t-s|^{\frac12}s^{-\frac12-\bba\cdot\frac{d}{2\bbp}}  \|f\|_{\bbp}.
\end{align*}
To obtain the last inequality above, we have used the estimate  $(s-r)^{-1}\lesssim s^{{-\frac12-\bba\cdot\frac{d}{2\bbp}}}(\frac{1}{n}-r)^{-\frac12+\bba\cdot\frac{d}{2\bbp}}$ for each $r\in[0,n^{-1}]$ and the assumption that $\zeta\le1/2$.
For the other integration over the other region, we apply \eqref{0418:01}, \eqref{est:semi-P-L}, inequality $k_n(r)\ge r/2$ for any $r>n^{-1}$ and \eqref{lem:A1} to get that
\begin{align*}
  &\mE\int_{n^{-1}}^s|\Gamma_{r-k_n(r)}b_n(r,Z^n_{k_n(r)})\nabla_v(P_{t-r}-P_{s-r}\Gamma_{t-s})f(Z^n_r)|\dif r
  \\&\le
  \|b_n\|_{L_T^\infty(\mL^{\bbp})} \int_{n^{-1}}^s (k_n(r))^{-\bba\cdot \frac{d}{\bbp}}\|\nabla_v(P_{t-r}-P_{s-r}\Gamma_{t-s})f\|_{\bbp}\dif r
  \\&\lesssim\|f\|_{\bbp}\int_{0}^s r^{-\bba\cdot\frac{d}{\bbp}}(s-r)^{-\frac{1}{2}}[((t-s)(s-r)^{-1})\wedge 1]\dif r
  \lesssim \|f\|_{\bbp}|t-s|^{\frac12} s^{-\bba\cdot\frac{d}{2\bbp}}.
\end{align*}
Combining all the estimates above, we obtain  \eqref{est:I3n-6.31move} for $f\in \smooth$

If $\bbp<(\infty,\infty)$, then \eqref{est:I3n-6.31move} also holds for $f\in\LL^\bbp$ by a density argument. For general $\bbp\in[2,\infty]^2$, we reason in the following way.
By a density argument, one can easily extend \eqref{est:I3n-6.31move} for $f\in C_b\cap \LL^p$. Next, we consider the case when $f\in \LL^\infty\cap\LL^p$. Denote the left-hand side of \eqref{est:I3n-6.31move} by $\Lambda(f)$.  We take a sequence $(f_j)$ in $ C_b $ which converges to $f$ a.e. and satisfies $\|f_j\|_\bbp\le \|f\|_\bbp$ and $\|f_j\|_\infty\le \|f\|_\infty$ for each $j$. 
Since for each $j$, $f_j\in C_b$, our considerations in the  previous case show that
\[|\Lambda(f_j)|\lesssim \|f_j\|_\bbp|t-s|^{\frac{1}{2}}s^{-\frac12-\bba\cdot\frac{d}{2\bbp}}\lesssim \|f\|_\bbp|t-s|^{\frac{1}{2}}s^{-\frac12-\bba\cdot\frac{d}{2\bbp}}.\]
Using dominated convergence theorem $\lim_j\Lambda(f_j-f)=0$. In view of the decomposition $\Lambda(f)=\Lambda(f_j)+\Lambda(f_j-f)$, these facts imply that \eqref{est:I3n-6.31move} holds for such $f$. 
Finally, for a general $f\in\LL^\bbp$, we define $f^N$ by truncating $f$ at level $N$. Then $\lim_Nf^N=f$ a.e. and $|f^N|\le |f|$ a.e.. The term $\Lambda(f^N)$ can be treated analogously as previously. The term $\Lambda(f^N-f)$ vanishes by dominated convergence theorem, which is justified by \eqref{est:add-Z}. We conclude that \eqref{est:I3n-6.31move} holds for $f\in\LL^\bbp$. 
\end{proof}
\begin{lemma}\label{lem:est-I3n}
 For any $\bbq\ge \bbp$, $t\in(0,1]$
\begin{align}\label{0419:02} 
    I^n_3(t)\lesssim n^{-\frac12}\left(t^{-\bba\cdot\frac{d}{2\bbq}-\zeta}+t^{-\frac12(\bba\cdot\frac{d}{\bbq}+\bba\cdot \frac{d}{\bbp})}\right)\|\varphi\|_\bbq,
\end{align}
where $I^n_3(t)$ is defined in \eqref{0419:00}.
\end{lemma}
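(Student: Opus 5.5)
We split the time integral defining $I^n_3(t)$ into the initial step $[0,n^{-1}]\cup[0,2n^{-1}]$ and the bulk $[2n^{-1},t]$; if $t\le 2/n$ only the first part is present. Write $f_r:=\nabla_vP_{t-r}\varphi$.

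\emph{Initial region.} For $r\le 2/n$ we use the crude bound $|b_n|\lesssim n^{\zeta}$ from \eqref{con.taming} on the first step. On the second step we use \eqref{0418:01} with $\|b_n\|_{\bbp}$. Together with $\|f_r\|_{\infty}\lesssim (t-r)^{-\frac12-\bba\cdot\frac{d}{2\bbq}}\|\varphi\|_\bbq$ from \eqref{est.DPpbes}, or the $\LL^\bbq$-version combined with \eqref{0418:01}, this gives a contribution of order
\begin{align*}
n^{\zeta}\int_0^{n^{-1}}\E|f_r(Z^n_r)|\,dr\lesssim n^{\zeta-1}\,t^{-\frac12-\bba\cdot\frac{d}{2\bbq}}
\end{align*}
when $t\ge 2/n$. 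Since $n^{-1}\le t^{\frac12}n^{-\frac12}$ and $\zeta\le\frac12$, this is bounded by $n^{-\frac12}t^{-\zeta-\bba\cdot\frac{d}{2\bbq}}$. When $t<2/n$, we simply bound the whole $I^n_3$ by $\int_0^t(\cdots)$, using $n^{-\frac12}\gtrsim t^{\frac12}$. This is where the $t^{-\zeta}$ factor in \eqref{0419:02} comes from.

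\emph{Bulk region.} For $r\ge 2/n$ we rewrite the integrand by conditioning. The key identity is
\begin{align*}
\E\Gamma_{r-k_n(r)}b_n(r,Z^n_{k_n(r)})\cdot f_r(Z^n_r)=\E\, b_n(r,Z^n_{k_n(r)})\cdot\big[\Gamma_{k_n(r)-r}f_r\big](Z^n_{k_n(r)}+\cdots),
\end{align*}
and the plan is to write the bulk difference as $J_1+J_2$:
\begin{align*}
J_1&:=\E\big[b_n(r,Z^n_r)\cdot f_r(Z^n_r)-\Gamma_{r-k_n(r)}(b_n f_r)(r,Z^n_{k_n(r)})\big],\\
J_2&:=\E\big[\Gamma_{r-k_n(r)}b_n(r,Z^n_{k_n(r)})\cdot\big(\Gamma_{r-k_n(r)}f_r(Z^n_{k_n(r)})-f_r(Z^n_r)\big)\big].
\end{align*}
Term $J_1$ is handled by \cref{lem:I3} with $s=k_n(r)\ge r/2$ and the function $b_n(r)\cdot f_r$. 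It gives $n^{-\frac12}r^{-\frac12-\bba\cdot\frac d{2\tilde\bbp}}\|b_nf_r\|_{\tilde\bbp}$, where $1/\tilde\bbp=1/\bbp+1/\bbq$, and $\|b_n f_r\|_{\tilde\bbp}\le\|b_n\|_\bbp\|f_r\|_\bbq\lesssim(t-r)^{-\frac12}\|\varphi\|_\bbq$. We expect to need \cref{lem:I3} for $\LL^{\tilde\bbp}$; its proof works for any admissible index, provided $\tilde\bbp$ still satisfies the relevant integrability, otherwise we would split with an $\LL^\infty$ bound on $f_r$. Term $J_2$ is exactly the situation of \cref{lem.technical} with $h=\Gamma_{r-k_n(r)}b_n(r)$, $\|h\|_\bbp=\|b_n\|_\bbp$, and $f=f_r$. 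It gives $n^{-1}$ times $\|\nabla f_r\|_\infty$-type and $\|f_r\|_{\bB^2_{\bbq;\bba}}+\|\nabla^2_vf_r\|_\bbq$-type norms. By \eqref{est.Pbesp} and \cref{lem.nablabes}, these behave like $(t-r)^{-1-\bba\cdot\frac d{2\bbq}}$ and $(t-r)^{-\frac32}$ respectively, times $\|\varphi\|_\bbq$. To avoid nonintegrable singularities at $r=t$, we interpolate. On $[t-n^{-1},t]$ we use the direct bound ($|b_n|\in\LL^\bbp$, \eqref{0418:01}), giving $\int_{t-1/n}^t(t-r)^{-\frac12}\lesssim n^{-\frac12}$. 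On the rest we use the $n^{-1}(t-r)^{-\frac32}$ bound, which integrates to $n^{-1}\cdot n^{\frac12}$. Equivalently, we use $\min(1,n^{-1}(t-r)^{-1})\le n^{-\frac12}(t-r)^{-\frac12}$.

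\emph{Collecting.} Combining the bounds, the time integrals are of Beta type $\int_0^t r^{-a}(t-r)^{-\frac12-c}\,dr$ with $a+c<1$, which are handled by \eqref{lem:A1}. They produce $n^{-\frac12}t^{-\frac12(\bba\cdot\frac d\bbq+\bba\cdot\frac d\bbp)}\|\varphi\|_\bbq$. The factors $k_n(r)^{-\bba\cdot\frac d\bbp}$ are integrable because $\bba\cdot\frac d\bbp<1$.

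The main obstacle is the $J_2$ term. We must check that the product of $r^{-\frac12(\bba\cdot\frac d\bbp+\bba\cdot\frac d\bbq)}$ with the interpolated $(t-r)$ singularity stays integrable and yields exactly the claimed power of $t$. We must also handle the gradient norms of \cref{lem.technical} in $\LL^\infty$ versus $\LL^\bbq$. If the $\LL^\infty$ gradient term gives a worse $t$-power, we would instead route it through the $\LL^\bbq$ bound by Hölder with \eqref{est:density-Zn}.
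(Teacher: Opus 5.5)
Your decomposition is the paper's. Your $J_2$ is its $S_1$, handled as you propose by \cref{lem.technical} interpolated with the direct bound via \cref{lem.singint}. Your $J_1$ is its $S_2$, handled by \cref{lem:I3}. Your treatment of the first two steps and of small $t$ also matches. The genuine problem is in $J_1$, not $J_2$. By using H\"older to put $b_n(r)\cdot f_r$ into $\LL^{\tilde\bbp}$ with $\|f_r\|_{\bbq}\lesssim(t-r)^{-\frac12}\|\varphi\|_\bbq$, you move the whole spatial singularity to the early-time side. \cref{lem:I3} then produces the weight $k_n(r)^{-\frac12-\gamma}$ with $\gamma:=\frac12(\bba\cdot\frac{d}{\bbp}+\bba\cdot\frac{d}{\bbq})$. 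Your claim in ``Collecting'' that every Beta-type integral has $a+c<1$ is therefore false for this term as soon as $\gamma\ge\frac12$. That regime is not exotic: in the proof of \cref{thm-weak} the lemma is used with $\bbq=\bbp$, where $\gamma=\bba\cdot\frac{d}{\bbp}$ can be any number in $[0,1)$ under \cref{ass:main}. For $\gamma>\frac12$ and $nt$ large, the bound you obtain is of order
\begin{align*}
n^{-\frac12}\int_{2/n}^{t} r^{-\frac12-\gamma}(t-r)^{-\frac12}\,dr\approx n^{-\frac12}\,t^{-\frac12}\,n^{\gamma-\frac12}=n^{-(1-\gamma)}\,t^{-\frac12},
\end{align*}
with a logarithmic loss at $\gamma=\frac12$. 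This is a rate that degenerates with the criticality gap, which is exactly what \eqref{0419:02} is meant to avoid.

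The fix is the $\LL^\infty$ split you list only as a contingency, and for a different reason; it is in fact necessary. Keep $f_r$ in $\LL^\infty$, so that $\|b_n(r)\cdot f_r\|_{\bbp}\le\|b_n\|_{\bbp}\|f_r\|_\infty\lesssim(t-r)^{-\frac12-\bba\cdot\frac{d}{2\bbq}}\|\varphi\|_\bbq$, and apply \cref{lem:I3} in $\LL^\bbp$ exactly as stated. The singularity then splits as $r^{-\frac12-\bba\cdot\frac{d}{2\bbp}}(t-r)^{-\frac12-\bba\cdot\frac{d}{2\bbq}}$, with both exponents strictly below $1$. The Beta integral then gives $n^{-\frac12}t^{-\gamma}$ with no loss.

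This also makes the extension of \cref{lem:I3} to $\tilde\bbp$ unnecessary, which helps because that extension is not immediate from its proof. There, \eqref{0418:01} would have to pair $b_n\in\LL^\bbp$ with an $\LL^{\tilde\bbp}$ function, which in general lies outside the range of \cref{lem.lprho}.

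A smaller slip in $J_2$: the bound $\min(1,n^{-1}(t-r)^{-1})\le n^{-\frac12}(t-r)^{-\frac12}$ is not equivalent to your splitting at $t-n^{-1}$. Combined with the factor $(t-r)^{-\frac12}$, it leaves the non-integrable $(t-r)^{-1}$. Keep the splitting, which is \cref{lem.singint} with $\gamma_2=1>1-\alpha_2=\frac12$.
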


\begin{proof}
We first make an observation that taking $\beta=0$ in \eqref{est.DPpbes} and using the embedding $\LL_\bbq\hookrightarrow \bB^{0}_{\bbq;\bba}$ (see \eqref{AB2}), we have 
\begin{align}\label{tmp.Pphi}
    \|\nabla_v P_{t-r}\varphi\|_\infty\lesssim (t-r)^{-\frac12-\bba\cdot \frac{d}{2\bbq}}\|\varphi\|_{\bbq} 
    \tand 
    \|\nabla_v P_{t-r}\varphi\|_\bbq\lesssim (t-r)^{-\frac12}\|\varphi\|_{\bbq}.
\end{align}

In the case when $t\le 3/n$, using \cref{ass.bn} and applying \eqref{0418:01} and \eqref{tmp.Pphi},  we obtain that
\begin{align*}
    I^n_3(t)&\le \sup_{s\le n^{-1}}\|b_n(s)\|_\infty\int_0^{t\wedge n^{-1}} \|\nabla_v P_{t-r}\varphi\|_\infty dr+\int_{t\wedge n^{-1}}^{t} (k_n(r))^{-\bba\cdot\frac{d}{2\bbp}}\|b_n(r)\|_\bbp\|\nabla_v P_{t-r}\varphi\|_\infty dr \\
    &\lesssim n^{\zeta}\int_0^t (t-r)^{-\frac12-\frac{\bba\cdot d/\bbq}{2}}\|\varphi\|_{\bbq} \dif r+\int_{t\wedge n^{-1}}^{t} (k_n(r))^{-\bba\cdot\frac{d}{2\bbp}}(t-r)^{-\frac12-\frac{\bba\cdot d/\bbq}{2}}\|\varphi\|_\bbq dr\\
    &\lesssim n^{\zeta}t^{\frac12-\bba\cdot \frac{d}{2\bbq}}\|\varphi\|_{\bbq}+\int_{t\wedge n^{-1}}^{t} r^{-\bba\cdot\frac{d}{2\bbp}}(t-r)^{-\frac12-\frac{\bba\cdot d/\bbq}{2}}\|\varphi\|_\bbq dr\\
    &\lesssim \left(n^{\zeta}t^{\frac12-\bba\cdot \frac{d}{2\bbq}}+t^{\frac12-\frac12(\bba\cdot\frac{d}{\bbp}+\frac{d}{\bbq})}\right)\|\varphi\|_{\bbq}.
   \end{align*}
   Since $n\le 3t^{-1}$, this implies \eqref{0419:02}.

In the case when $t>3/n$,  we  have
\begin{align}\label{eq:In3-S1S2}
    I^n_3(t)&\le \Big|\mE\int_0^t \Gamma_{s-k_n(s)}b_n(s,Z^n_{k_n(s)})\cdot\left(\Gamma_{s-k_n(s)}\nabla_v P_{t-s}\varphi(Z^n_{k_n(s)})-\nabla_v P_{t-s}\varphi(Z^n_s)\right)\dif s\Big|\nonumber\\
    &\quad+\Big|\mE\int_0^t \Gamma_{s-k_n(s)}(b_n(s)\cdot \nabla_v P_{t-s}\varphi)(Z^n_{k_n(s)})-(b_n(s)\cdot \nabla_v P_{t-s}\varphi)(Z^n_s)\dif s\Big|\nonumber\\
    &=:S_{1}+S_{2}.
\end{align}
We define $\tfrac{1}{\bbr}:=\tfrac{1}{\bbp}+\tfrac{1}{\bbq}$ and
\begin{align*}
    Q_n(s):=\mE \Gamma_{s-k_n(s)}b_n(s,Z^n_{k_n(s)})\cdot\left(\Gamma_{s-k_n(s)}\nabla_v P_{t-s}\varphi(Z^n_{k_n(s)})-\nabla_v P_{t-s}\varphi(Z^n_s)\right).
\end{align*}
We have
\begin{align*}
    S_{1}=\int_0^t|Q_n(s)|\dif s 
    \le\big(\int_0^{\frac{1}{n}}+\int_{\frac{1}{n}}^{t}
    \big) 
    \big|Q_n(s)\big|\dif s
    =:S_{11}+S_{12}.
\end{align*}
Using \eqref{est.DPpbes} (with $\beta=0$) and \eqref{AB2}, we have
\begin{align*}
   S_{11}\le 2n^{\zeta} \int_0^{\frac1n}\|\nabla_v  P_{t-s}\varphi \|_\infty ds \lesssim n^{\zeta} \int_0^{\frac1n}(t-s)^{-\frac12-\frac{d}{2\bbq}}\|\varphi\|_{\bbq}ds.
\end{align*}
Noting that $n^{-1}\le t/3$ and $(t-s)^{-1}\le\frac{3}{2}t^{-1}$, we have 
\begin{align*}
   S_{11}\lesssim n^{\zeta} \int_0^{\frac1n}t^{-\frac12-\frac{d}{2\bbq}}\|\varphi\|_{\bbq}ds\lesssim n^{\zeta-1}t^{-\frac12-\frac{d}{2\bbq}}\|\varphi\|_{\bbq}\lesssim n^{-\frac12}t^{-(\zeta+\frac{d}{2\bbq})}\|\varphi\|_{\bbq} .
\end{align*}
Concerning $S_{12}$, 
on one hand,  when $s>1/n$, $k_n(s)>s/2$ so that by \eqref{0418:01} and \eqref{tmp.Pphi}, we have
\begin{align}\label{0426:03}
    |Q_n(s)|\lesssim 2(k_n(s))^{-\bba\cdot\frac{d}{2\bbr}}\|b_n\|_{L_T^\infty(\LL^\bbp)}\|\nabla_vP_{t-s}\varphi\|_\bbq\lesssim s^{-\bba\cdot\frac{d}{2\bbr}}(t-s)^{-\frac12}\|\varphi\|_\bbq.
\end{align}
On the other hand, by \eqref{0416:00}, \cref{lem.nablabes}, \eqref{est.Pbesp},  we have 
\begin{align*}
|Q_n(s)|\lesssim&\|b_n\|_{L_1^\infty(\LL^\bbp)}\Big(n^{-2}(k_n(s))^{-\bba\cdot\frac{d}{\bbp}}\|\nabla_v P_{t-s}\varphi\|_{\bB^{3,1}_{\infty;\bba}}+n^{-1}(k_n(s))^{-\bba\cdot\frac{d}{\bbp}}\|\nabla_v P_{t-s}\varphi\|_{\bB^{1,1}_{\infty;\bba}}\\
&\qquad\qquad\quad+n^{-1}(k_n(s))^{-\bba\cdot\frac{d}{2\bbr}}\|\nabla_v P_{t-s}\varphi\|_{\bB^{2,1}_{\bbq;\bba}}\Big)\\
\lesssim &\|\varphi\|_\bbq \Big(n^{-2}s^{-\bba\cdot\frac{d}{\bbp}}(t-s)^{-2-\frac{d}{2\bbq}}+n^{-1}s^{-\bba\cdot\frac{d}{\bbp}}(t-s)^{-1-\frac{d}{2\bbq}}+n^{-1}s^{-\bba\cdot\frac{d}{2\bbr}}(t-s)^{-\frac32}\Big).
\end{align*}
Combining the above estimate with \eqref{0426:03}, noting that $\bba\cdot\frac{d}{2\bbq}<\frac12$ and $\bbq\ge\bbp$, we have 
\begin{align*}
   |Q_n(s)|\lesssim \|\varphi\|_\bbq&\Big( s^{-\bba\cdot\frac{d}{2\bbr}}(t-s)^{-\frac12}\min(n^{-1}(t-s)^{-1},1)+ s^{-\bba\cdot\frac{d}{\bbp}}(t-s)^{-\frac12}\big(\min(n^{-2}(t-s)^{-2},1)\\&+\min(n^{-1}(t-s)^{-1},1)\big)\Big)
   \\\lesssim \|\varphi\|_\bbq&\Big( s^{-\bba\cdot\frac{d}{2\bbr}}(t-s)^{-\frac12}\min(n^{-1}(t-s)^{-1},1)+ s^{-\bba\cdot\frac{d}{\bbp}}(t-s)^{-\frac12}\min(n^{-1}(t-s)^{-1},1)\Big),
\end{align*}
which by  \cref{lem.singint} implies that
\begin{align*}
    S_{12}&\le\int_{1/n}^t|Q_n(s)|\dif s\\&\lesssim \|\varphi\|_\bbq \int_{1/n}^t \Big( s^{-\bba\cdot\frac{d}{2\bbr}}(t-s)^{-\frac12}\min(n^{-1}(t-s)^{-1},1)+ s^{-\bba\cdot\frac{d}{\bbp}}(t-s)^{-\frac12}\min(n^{-1}(t-s)^{-1},1)\Big)\dif s\\
    &\lesssim n^{-\frac12} t^{-\bba\cdot\frac{d}{2\bbr}}\|\varphi\|_\bbq.
\end{align*}
Therefore, combining the estimate for $S_{11}$, we 
have
\begin{align}\label{est:In3-S1}
    S_1\lesssim n^{-\frac12}\left(t^{-\bba\cdot\frac{d}{2\bbq}-\zeta}+t^{-\frac12(\bba\cdot\frac{d}{\bbq}+\bba\cdot \frac{d}{\bbp})}\right)\|\varphi\|_\bbq.
\end{align}

Next we estimate $S_{2}$. By dividing the interval $[0,t]$ into $[0,1/n]\cup [1/n,2/n] \cup [2/n,t]$,  following from \eqref{0418:01}, we have 
\begin{align*}
    S_2\lesssim& \int_0^{\frac1n}\|b_n(s)\|_\infty\|\nabla_v P_{t-s}\varphi\|_\infty ds +\int_{\frac1n}^{\frac2n} (k_n(s))^{-\bba\cdot\frac{d}{2\bbp}}\|b_n(s)\|_\bbp \|\nabla_v P_{t-s}\varphi\|_\infty ds\\
    &+\int_{\frac{2}{n}}^t |\mE \Gamma_{s-k_n(s)}f(s,Z^n_{k_n(s)})-\mE f(s,Z^n_{s})|ds,
\end{align*}
where $f(s):=b_n(s)\cdot\nabla_v P_{t-s}\varphi$. 
Then using conditions \eqref{con.taming}, \eqref{con.bnrate}, inequalities \eqref{tmp.Pphi} and \eqref{est:I3n-6.31move}, we obtain that 
\begin{align*}
    S_{2}&\lesssim n^{\zeta}\int_0^{\frac1n}(t-s)^{-\frac12-\bba\cdot\frac{d}{2\bbq}}\dif s\|\varphi\|_\bbq+\int_{\frac1n}^{\frac2n} (k_n(s))^{-\bba\cdot \frac{d}{2\bbp}}(t-s)^{-\frac12-\bba\cdot\frac{d}{2\bbq}}\dif s\|\varphi\|_\bbq\\
    &\quad+\int_{\frac2n}^t |s-k_n(s)|^\frac12(k_n(s))^{-\frac12-\bba\cdot\frac{d}{2\bbp}}\|b_n(s)\cdot \nabla_v P_{t-s}\varphi\|_\bbp ds.
\end{align*}    
Noting that $(t-s)^{-1}\lesssim t^{-1}$ for $s<\frac2n<\frac{2t}{3}$, $k_n(s)\ge \frac{s}{2}$ for $s>\frac1n$, and 
$$
\|b_n(s)\cdot \nabla_v P_{t-s}\varphi\|_\bbp\le \|b_n(s)\|_\bbp\|\nabla_v P_{t-s}\varphi\|_\infty\lesssim (t-s)^{-\frac12-\bba\cdot\frac{d}{2\bbq}}\|\varphi\|_\bbq, $$ 
one sees that
        \begin{align*}
       S_{2}&\lesssim \left(n^{\zeta-1}t^{-\frac12-\bba\cdot\frac{d}{2\bbq}}+\int_{\frac1n}^{\frac2n} s^{-\bba\cdot \frac{d}{2\bbp}}t^{-\frac12-\bba\cdot\frac{d}{2\bbq}}\dif s+n^{-\frac12}\int_{\frac2n}^t s^{-\frac12-\bba\cdot\frac{d}{2\bbp}}(t-s)^{-\frac12-\bba\cdot\frac{d}{2\bbq}}ds \right)\|\varphi\|_\bbq \\
       &\lesssim \left(n^{\zeta-1}t^{-\frac12-\bba\cdot\frac{d}{2\bbq}}+n^{-1+\bba\cdot \frac{d}{2\bbp}}t^{-\frac12-\bba\cdot\frac{d}{2\bbq}}+n^{-\frac12}t^{-\frac12(\bba\cdot\frac{d}{\bbp}+\bba\cdot\frac{d}{\bbq})} \right)\|\varphi\|_\bbq\\
    &\lesssim n^{-\frac12}\left(t^{-\zeta-\bba\cdot\frac{d}{2\bbq}}+t^{-\frac12(\bba\cdot\frac{d}{\bbq}+ \frac12\bba\cdot \frac{d}{\bbp})}\right)\|\varphi\|_\bbq,
\end{align*}
provided that $n^{-1}\lesssim t$. These estimates and \eqref{est:In3-S1} yield \eqref{0419:02}.
\end{proof}

\subsection{Proof of \cref{thm-weak}}

We will need the following variation of Gr\"onwall inequality, inspired by \cite[Lemma 2.2]{Zhang10}.
 \begin{lemma}
        [Gr\"onwall  inequality] 
        \label{lem:Vol}  Let $T,\alpha,\beta,c_0$ be non-negative numbers such that $\alpha+\beta<1$. For each $0< s\le t\le T$, define $\kappa_0(t,s)=c_0(t-s)^{-\alpha}s^{-\beta}$ and for each integer $k\ge1$, $\kappa_k(t,s)=\int_s^t \kappa_0(t,u)\kappa_{k-1}(u,s)du$. Let $f,g:(0,T]\rightarrow\mR_+$ be measurable functions. Suppose there exist constants $j\ge0$ and $c_0\ge0$ such that
        \begin{align}\label{con.Vol-1} 
            \sup_{t\in(0,T]}\int_0^t\kappa_j(t,s)f(s)ds<\infty,
        \end{align}
        and
        \begin{align}
         \label{con.Vol-2}  &  f(t)\leq g(t)+\int_0^t\kappa_0(t,s)f(s)ds \quad \forall t\in(0,T]. 
        \end{align}
        Then, there exists a finite positive constant $ C=C(T,\alpha,\beta,c_0)$ such that
        \begin{align*}
            f(t)\leq g(t)+\sum_{k=0}^{j-1}\int_0^t \kappa_k(t,s)g(s)ds+ C \sup_{\tau\le t }\int_0^\tau\kappa_j(\tau,s)g(s)ds \quad \forall t\in(0,T]. 
        \end{align*}
    \end{lemma}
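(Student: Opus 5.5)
The plan is to iterate the inequality \eqref{con.Vol-2} and estimate the remainder using the Volterra-kernel structure of the $\kappa_k$. First I will record the elementary kernel bound that drives everything. Since $\alpha+\beta<1$, a Beta-function computation gives
\[
\int_s^t (t-u)^{-\alpha}u^{-\beta}(u-s)^{\gamma-1}\,du\le C\,(t-s)^{\gamma-\alpha}s^{-\beta}\quad\text{or}\quad \le C(t-s)^{\gamma-\alpha-\beta},
\]
depending on whether one bounds $u^{-\beta}\le s^{-\beta}$ or keeps it inside the integral. By induction, I expect
\[
\kappa_k(t,s)\le \frac{c_0^{k+1}\Gamma(1-\alpha)^{k+1}\,\Gamma(1-\beta)^{k}}{\Gamma((k+1)(1-\alpha-\beta)+\beta)}\,(t-s)^{k(1-\alpha-\beta)-\alpha}s^{-\beta},
\]
or a comparable bound with Gamma-function denominators. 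In particular, $\sum_k\kappa_k(t,s)$ converges and is bounded by $C(t-s)^{-\alpha}s^{-\beta}$ multiplied by a factor depending only on $T,\alpha,\beta,c_0$. Moreover, for $k\ge k_0$, with $k_0$ large enough that $k_0(1-\alpha-\beta)\ge\alpha$, $\kappa_k$ is bounded by $C_k s^{-\beta}$, and $C_k$ is summable.

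Second, I will iterate. Plugging \eqref{con.Vol-2} into itself $m$ times and using Fubini together with the definition $\kappa_k(t,s)=\int_s^t\kappa_0(t,u)\kappa_{k-1}(u,s)du$ gives, for every $m\ge j$,
\[
f(t)\le g(t)+\sum_{k=0}^{m-1}\int_0^t\kappa_k(t,s)g(s)ds+\int_0^t\kappa_m(t,s)f(s)ds.
\]
The terms with $k<j$ are kept as stated. The terms with $j\le k<m$ are handled by writing $\kappa_k=\kappa_{k-j-1}\ast\kappa_0\ast\cdots$, or more directly by using the semigroup identity $\kappa_{k}(t,s)=\int_s^t\kappa_{k-j-1}(t,u)\kappa_{j}(u,s)du$, which follows from associativity of the Volterra convolution. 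This yields
\[
\int_0^t\kappa_k(t,s)g(s)ds=\int_0^t\kappa_{k-j-1}(t,u)\Big(\int_0^u\kappa_j(u,s)g(s)ds\Big)du\le \sup_{\tau\le t}\int_0^\tau\kappa_j(\tau,s)g(s)ds\cdot\int_0^t\kappa_{k-j-1}(t,u)du.
\]
By the first step, $\sum_{i\ge0}\int_0^t\kappa_i(t,u)du\le C(T,\alpha,\beta,c_0)$, since $\int_0^t(t-u)^{-\alpha}u^{-\beta}du\lesssim T^{1-\alpha-\beta}$. For the $k=j$ term the bound is trivial. This gives the constant $C$ in front of the supremum.

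Third, the main obstacle is showing that the remainder $\int_0^t\kappa_m(t,s)f(s)ds$ vanishes as $m\to\infty$, since $f$ is not assumed bounded, only that \eqref{con.Vol-1} holds. I will use the same factorization $\kappa_m(t,s)=\int_s^t\kappa_{m-j-1}(t,u)\kappa_j(u,s)du$, which gives
\[
\int_0^t\kappa_m(t,s)f(s)ds\le \sup_{u\le T}\int_0^u\kappa_j(u,s)f(s)ds\cdot\int_0^t\kappa_{m-j-1}(t,u)du.
\]
The first factor is finite by \eqref{con.Vol-1}, and the second tends to zero by the Gamma-function decay from the first step. Letting $m\to\infty$ then concludes the proof. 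The care required lies in verifying the kernel induction bound and the associativity identity, both of which are routine given $\alpha+\beta<1$.
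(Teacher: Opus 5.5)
Your proof is correct, but it follows a different route from the paper.

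\textbf{The paper's route.} The paper never estimates the iterated kernels. It sets $m_u=\sup_{t\le u}e^{-\lambda t}\int_0^t\kappa_j(t,s)f(s)\,ds$, which is finite by \eqref{con.Vol-1}. It then applies $\kappa_j$ to \eqref{con.Vol-2} and swaps $\kappa_j$ and $\kappa_0$ using associativity of the Volterra composition. The only kernel estimate it needs is the one-kernel bound $\int_0^u\kappa_0(u,t)e^{-\lambda(u-t)}\,dt\lesssim\lambda^{\alpha+\beta-1}$, which lets it absorb the $f$-term for $\lambda$ large; this is a Bielecki-norm argument. The terms with $k<j$ come from the finite iteration $f_{k-1}\le g_{k-1}+f_k$.

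\textbf{Your route.} You expand the full Neumann series. You then use \eqref{con.Vol-1} to kill the remainder through the factorization $\kappa_m=\kappa_{m-j-1}\circ\kappa_j$ and the decay of $\sup_{t\le T}\int_0^t\kappa_i(t,u)\,du$.

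\textbf{Comparison.} The paper's argument is shorter and needs nothing about $\kappa_k$ beyond associativity. Yours is more hands-on and gives the explicit constant $C=1+\sum_{i\ge0}\sup_{t\le T}\int_0^t\kappa_i(t,u)\,du$, at the cost of the Beta-function induction. In both arguments, \eqref{con.Vol-1} plays the same role: it provides the finiteness that lets the $f$-term be removed.

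\textbf{A correction in your induction.} When $\alpha,\beta>0$, your displayed closed form is not a valid upper bound, because the Beta factors do not telescope. Already for $k=1$, letting $s\downarrow0$ shows that the ratio of your bound to the true $\kappa_1(t,s)$ tends to
$\Gamma(1-\alpha)\Gamma(1-\beta)/\Gamma(1-\alpha-\beta)=(1-\alpha-\beta)B(1-\alpha,1-\beta)$.
By strict log-convexity of $\Gamma$ this is strictly less than $1$; for example, it is about $0.48$ when $\alpha=\beta=0.4$.

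The right induction bounds $u^{-\beta}\le(u-s)^{-\beta}$ inside the integral. With $\gamma=1-\alpha-\beta$ and $C_0=c_0$, this gives $\kappa_k(t,s)\le C_k(t-s)^{k\gamma-\alpha}s^{-\beta}$, where $C_k=c_0\Gamma(1-\alpha)\frac{\Gamma(k\gamma)}{\Gamma(k\gamma+1-\alpha)}C_{k-1}$. The factor $\Gamma(k\gamma)/\Gamma(k\gamma+1-\alpha)$ behaves like $(k\gamma)^{\alpha-1}$, so $C_k$ still decays faster than any geometric sequence. That decay is all your second and third steps use, and your ``comparable bound'' hedge covers it, so the conclusion stands.
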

    \begin{proof}  
        Let $\lambda$ be a posstive constant chosen later and  define   $m_u=\sup_{t\le u}e^{-\lambda t}\int_0^t\kappa_j(t,s)f(s)ds $ which is finite for each $u\in(0,T]$  by \eqref{con.Vol-1}. Note that $u\mapsto m_u$ is non-decreasing and that 
        \begin{align*}
           \int_0^u \kappa_j(u,t) \left(\int_0^t\kappa_0(t,s)f(s)ds\right)dt
           &=\int_0^u \kappa_0(u,t) \left(\int_0^t\kappa_j(t,s)f(s)ds\right)dt
           \\&\le \int_0^u \kappa_0(u,t)e^{\lambda t}m_udt.
        \end{align*}
        Together with \eqref{con.Vol-2}, we obtain that
        \begin{align*}
            e^{-\lambda u}\int_0^u \kappa_j(u,t)f(t)dt\le e^{-\lambda u}\int_0^u \kappa_j(u,t)g(t)dt+m_u\int_0^u \kappa_0(u,t)e^{-\lambda(u-t)}dt  .
        \end{align*}
        Noting that $e^{-\lambda(u-t)}\le\min((\lambda(u-t))^{-\gamma},1)$ for any $\gamma>1-\alpha$, we apply \eqref{lem:A1} to see that 
        \begin{align*}
            \int_0^u \kappa_0(u,t)e^{-\lambda(u-t)}dt\lesssim\min (\lambda^{1-\alpha}t^{-\beta}, t^{1-\alpha-\beta})\lesssim \lambda^{\alpha+\beta-1}.
        \end{align*}
        It follows that
        \begin{align*}
            e^{-\lambda u}\int_0^u \kappa_j(u,t)f(t)dt\lesssim e^{-\lambda u}\int_0^u \kappa_j(u,t)g(t)dt+\lambda^{\alpha+\beta-1} m_u .
        \end{align*}
        By choosing $\lambda$ sufficiently large, we obtain that 
        \begin{align*}
            m_u\lesssim\sup_{t\le u} e^{-\lambda t} \int_0^t \kappa_j(t,s)g(s)ds.
        \end{align*}
        and hence, there is a constant $C=C(T,\alpha,\beta,c_0)$ so that
        \begin{align}\label{tmp.fj}
            \int_0^t\kappa_j(t,s)f(s)ds\le e^{\lambda t}m_t\le C \sup_{\tau\le t}\int_0^\tau \kappa_j(\tau,s)g(s)ds.
        \end{align}
        For each $k$, define $f_k(t)=\int_0^t \kappa_k(t,s)f(s)ds$ and similarly for $g_k$.
        From \eqref{con.Vol-2}, we have $f_{k-1}(t)\le g_{k-1}(t)+f_k(t)$ for every integer $k\ge1$, and hence, by iteration, 
        \begin{align*}
            f_0(t)\le  \sum_{k=0}^{j-1} g_{k}(t)+f_j(t).
        \end{align*}
        Combining with \eqref{tmp.fj}, we obtain that 
        \begin{align*}
            f_{0}(t)\le \sum_{k=0}^{j-1} g_{k}(t)+C\sup_{\tau\le t}g_j(\tau).
        \end{align*}
        Applying this estimate to the right-hand side of \eqref{con.Vol-2}, we obtain the result. 
    \end{proof}
\begin{proof}[Proof of  \cref{thm-weak}]

    We hinge on the inequality \eqref{0419:00}.
    Let $\bbp'$ be the H\"older conjugate of $\bbp$.
  Using  \eqref{est.DPpbes}, it is easy to see that
\begin{align}
    I^n_2(t)&=\Big|\int_0^t \<b_n(r)\cdot\nabla_vP_{t-r}\varphi,\rho_r-\rho^n_r\>\dif r\Big|\le \|b_n\|_{\bbp}\int_0^t \|\nabla_vP_{t-r}\varphi\|_\infty\|\rho_r-\rho^n_r\|_{\bbp'}\dif r\no\\
    &\lesssim \|\varphi\|_{\bbq}\int_0^t (t-r)^{-\frac{1}{2}-\bba\cdot\frac{d}{2\bbq}}\|\rho_r-\rho^n_r\|_{\bbp'}\dif r.\no
\end{align}
The terms $I^n_1$ and $I^n_3$ are estimated by  \eqref{0419:04} and \eqref{0419:02} respectively.
Applying these estimates in  \eqref{0419:00} yields that 
\begin{align*}
    \int \varphi(\rho_t-\rho^n_t)dz\lesssim n^{-(\frac12\wedge \vartheta)}\left(t^{-\zeta-\bba\cdot \frac{d}{2\bbq}}+t^{-\frac12(\bba\cdot\frac{d}{\bbp}+\bba\cdot\frac{d}{\bbq})}\right)+\int_0^t (t-r)^{-\frac{1}{2}-\bba\cdot\frac{d}{2\bbq}}\|\rho_r-\rho^n_r\|_{\bbp'}\dif r
\end{align*}
for every $\varphi\in\smooth$ such that $\|\varphi\|_{\bbq}=1$.
An application of \cref{lem:app} (with $\bbr=\bbq'$) yields 
\begin{align}
     \|\rho_t-\rho^n_t\|_{\bbq'}&\lesssim n^{-(\frac12\wedge \vartheta)}t^{-(\zeta\vee \bba\cdot \frac{d}{2\bbp})-\bba\cdot \frac{d}{2\bbq}}
     +\int_0^t (t-r)^{-\frac{1}{2}-\bba\cdot\frac{d}{2\bbq}}\|\rho_r-\rho^n_r\|_{\bbp'}\dif r.\label{tmp.rr}
\end{align}
In particular, choosing $\bbq=\bbp$, we have for every $t\in[0,1]$,
\begin{align}
     \|\rho_t-\rho^n_t\|_{\bbp'}&\le  cn^{-(\frac12\wedge \vartheta)}t^{-(\zeta\vee \bba\cdot \frac{d}{2\bbp})-\bba\cdot \frac{d}{2\bbp}}
     +c\int_0^t (t-r)^{-\frac{1}{2}-\bba\cdot\frac{d}{2\bbp}}\|\rho_r-\rho^n_r\|_{\bbp'}\dif r\label{tmp.rrp}
\end{align}
for some finite positive constant $c$. 
Define $\kappa_0(t,s)=c(t-s)^{-\frac{1}{2}-\bba\cdot\frac{d}{2\bbp}}$, $\kappa_k$ as in \cref{lem:Vol} and  $g(t)=cn^{-(\frac12\wedge \vartheta)}t^{-(\zeta\vee \bba\cdot \frac{d}{2\bbp})-\bba\cdot \frac{d}{2\bbp}}$. It is straightforward to verify that  for every integer $k\ge0$,
\begin{align*}
    \int_0^t \kappa_k(t,s)g(s)ds\lesssim 
   t^{(k+1)\gamma} g(t)
    \twhere \gamma=\frac{1}{2}-\bba\cdot\frac{d}{2\bbp}>0.
\end{align*}
Furthermore, by  \eqref{est:density-Zn} we have
\begin{align*}
  f(s):=\|\rho_s-\rho^n_s\|_{\bbp'}\leq\|\rho_s\|_{\bbp'}+\|\rho^n_s\|_{\bbp'}\lesssim s^{-\frac{1}{2}\bba\cdot\frac{d}{\bbp}}.  
\end{align*}
This implies that for every integer $k\ge0$,
\begin{align*}
    \int_0^t \kappa_k(t,s) f(s) ds\lesssim cn^{-(\frac12\wedge \vartheta)}t^{(k+1)\gamma -\bba\cdot \frac{d}{2\bbp}} .
\end{align*}
It follows that such function $f$ satisfies \eqref{con.Vol-1} with an integer $j$ such that $(j+1)\gamma -\bba\cdot \frac{d}{2\bbp}>0$. Applying \cref{lem:Vol},  \eqref{tmp.rrp} implies that
\begin{align*}
     \|\rho_t-\rho^n_t\|_{\bbp'}\lesssim g(t) .
\end{align*}
Plugging this estimate into \eqref{tmp.rr} yields \eqref{est:thm-weak-S-1}, completing the proof.
\end{proof}



\appendix
\section{Some technical lemmas}\label{app}
The following elementary lemma is similar as \cite[Lemma A.2]{HRZ}.
\begin{lemma}\label{lem.singint}
    Let $\varepsilon>0$, $\alpha_1,\alpha_2\in[0,1)$; $\gamma_1,\gamma_2\ge0$ be some fixed numbers. For any $\alpha,\gamma\ge0$, define 
    \begin{align}
        \ell_{\alpha,\gamma}(\varepsilon)=
        \left\{
        \begin{aligned}
            &\varepsilon \wedge 1 &\quad\text{ if }\quad \gamma<1-\alpha,
            \\ &\left(\varepsilon(1+|\log\varepsilon| )\right)\wedge 1&\quad\text{ if }\quad \gamma=1-\alpha,
            \\&\varepsilon^{\frac{1-\alpha}{\gamma}} \wedge 1&\quad\text{ if }\quad \gamma>1-\alpha.
        \end{aligned}
        \right.
    \end{align}
    There there is a finite constant $C=C(\alpha_1,\alpha_2,\gamma_1,\gamma_2)$ such that for all $t>0$,
    \begin{align*}
        \int_0^t s^{-\alpha_1}(t-s)^{-\alpha_2}\min(\varepsilon s^{-\gamma_1}(t-s)^{-\gamma_2},1)\le C t^{1-\alpha_1-\alpha_2} (\ell_{\alpha_1,\gamma_1}(\varepsilon t^{-\gamma_1-\gamma_2} )+\ell_{\alpha_2,\gamma_2}(\varepsilon t^{-\gamma_1-\gamma_2})).
    \end{align*}
In particular, when $\gamma_1=0$,  $\gamma_2=\gamma>1-\alpha_2$ and $\eps=\lambda^{-\gamma}$, we have
\begin{align}\label{lem:A1}
        \int_0^t s^{-\alpha_1}(t-s)^{-\alpha_2}\left([(\lambda(t-s))^{-\gamma}]\wedge 1\right)ds\le C \left(\lambda^{-1+\alpha_2}t^{-\alpha_1}\right)\wedge t^{1-\alpha_1-\alpha_2}.
    \end{align}
\end{lemma}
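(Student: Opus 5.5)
The plan is to scale out $t$ and then split the integral into the region near $s=0$ and the region near $s=t$, where in each region only one of the two singular factors matters.

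\emph{Scaling.} Substitute $s=tu$. Then
\[
\int_0^t s^{-\alpha_1}(t-s)^{-\alpha_2}\min(\varepsilon s^{-\gamma_1}(t-s)^{-\gamma_2},1)\,ds=t^{1-\alpha_1-\alpha_2}\int_0^1u^{-\alpha_1}(1-u)^{-\alpha_2}\min(\tilde\varepsilon u^{-\gamma_1}(1-u)^{-\gamma_2},1)\,du,
\]
with $\tilde\varepsilon=\varepsilon t^{-\gamma_1-\gamma_2}$. So it suffices to treat $t=1$ and prove the bound $\lesssim \ell_{\alpha_1,\gamma_1}(\tilde\varepsilon)+\ell_{\alpha_2,\gamma_2}(\tilde\varepsilon)$. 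Since the integral is at most $\int_0^1u^{-\alpha_1}(1-u)^{-\alpha_2}du<\infty$, the "$\wedge1$" parts are automatic. We may therefore assume $\tilde\varepsilon\le1$.

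\emph{Splitting.} Split $[0,1]$ into $[0,1/2]$ and $[1/2,1]$. On $[0,1/2]$ we have $(1-u)^{-\alpha_2}\simeq1$ and $(1-u)^{-\gamma_2}\simeq1$. That piece is therefore comparable to
\[
J(\alpha_1,\gamma_1):=\int_0^{1/2}u^{-\alpha_1}\min(\tilde\varepsilon u^{-\gamma_1},1)\,du.
\]
Symmetrically, after the substitution $u\mapsto1-u$, the piece on $[1/2,1]$ is comparable to $J(\alpha_2,\gamma_2)$. It then remains to show $J(\alpha,\gamma)\lesssim\ell_{\alpha,\gamma}(\tilde\varepsilon)$, treating three cases.

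\emph{Case $\gamma<1-\alpha$.} Bound the minimum by $\tilde\varepsilon u^{-\gamma}$. Then $\int_0^{1/2}u^{-\alpha-\gamma}du<\infty$, giving $J\lesssim\tilde\varepsilon$.

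\emph{Case $\gamma>1-\alpha$.} The crossover point is $u_0=\tilde\varepsilon^{1/\gamma}\le1$. Use $1$ on $[0,u_0]$ and $\tilde\varepsilon u^{-\gamma}$ on $[u_0,1/2]$. This gives
\[
u_0^{1-\alpha}+\tilde\varepsilon u_0^{1-\alpha-\gamma}\simeq\tilde\varepsilon^{(1-\alpha)/\gamma}.
\]

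\emph{Case $\gamma=1-\alpha$.} Split at $u_0$ again. The first part gives $\tilde\varepsilon$, and the second gives $\tilde\varepsilon\int_{u_0}^{1/2}u^{-1}du\lesssim\tilde\varepsilon(1+|\log\tilde\varepsilon|)$.

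\emph{The particular case \eqref{lem:A1}.} Take $\gamma_1=0$, $\varepsilon=\lambda^{-\gamma}$, $\gamma_2=\gamma>1-\alpha_2$. Then $\tilde\varepsilon=(\lambda t)^{-\gamma}$.
\begin{itemize}
\item The $\ell_{\alpha_2,\gamma}$ term is $\le(\lambda t)^{-(1-\alpha_2)}\wedge1$. Multiplied by $t^{1-\alpha_1-\alpha_2}$, this is $\lambda^{-1+\alpha_2}t^{-\alpha_1}\wedge t^{1-\alpha_1-\alpha_2}$.
\item The $\ell_{\alpha_1,0}$ term is $\tilde\varepsilon\wedge1=(\lambda t)^{-\gamma}\wedge1$. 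Since $\gamma>1-\alpha_2$, this is $\le(\lambda t)^{-(1-\alpha_2)}\wedge1$, because the base is at most $1$ whenever the minimum is nontrivial.
\end{itemize}
This yields \eqref{lem:A1}.

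There is no serious obstacle. The only care needed is in the boundary case $\gamma=1-\alpha$ (the logarithm) and in checking that the comparability on each half holds uniformly, which follows from $\gamma_i\ge0$.
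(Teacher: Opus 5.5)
Your proof is correct and follows the paper's route. You scale out $t$ to get $\tilde\varepsilon=\varepsilon t^{-\gamma_1-\gamma_2}$, split at $u=1/2$, and reduce to $\int_0^{1/2}u^{-\alpha}\min(\tilde\varepsilon u^{-\gamma},1)\,du\lesssim \ell_{\alpha,\gamma}(\tilde\varepsilon)$. You also write out the three-case computation at the crossover point $u_0=\tilde\varepsilon^{1/\gamma}$ and the deduction of \eqref{lem:A1}, including the comparison $(\lambda t)^{-\gamma}\wedge 1\le(\lambda t)^{-(1-\alpha_2)}\wedge1$; the paper leaves both as straightforward, and your versions check out.
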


\begin{proof}
    Let $ I(t,\varepsilon)$ denote the integral on the left-hand side. 
    By a change of variable, we have
    \begin{align*}
        I(t,\varepsilon)=t^{1-\alpha_1-\alpha_2}I(1,\varepsilon t^{-\gamma_1-\gamma_2}).
    \end{align*}
    Thus, it suffices to estimate $I(1,\varepsilon)$ for $\varepsilon>0$. We have 
    \begin{align*}
        I(1,\varepsilon)&=\left(\int_0^{1/2}+\int_{1/2}^1\right)s^{-\alpha_1}(1-s)^{-\alpha_2}\min(\varepsilon s^{-\gamma_1}(1-s)^{-\gamma_2},1)\dif s
        \\&
        \lesssim \int_0^{1/2}s^{-\alpha_1}\min(\varepsilon s^{-\gamma_1},1)\dif s+\int_{1/2}^1(1-s)^{-\alpha_2}\min(\varepsilon (1-s))^{-\gamma_2},1)\dif s
        \\&
        = \int_0^{1/2}s^{-\alpha_1}\min(\varepsilon s^{-\gamma_1},1)\dif s+\int_0^{1/2}s^{-\alpha_2}\min(\varepsilon s^{-\gamma_2},1)\dif s.
    \end{align*}
    It is straightforward to verify that 
    \begin{align*}
        \int_0^{1/2}s^{-\alpha}\min(\varepsilon s^{-\gamma},1)\dif s\lesssim \ell_{\alpha,\gamma}(\eps)
    \end{align*}
    for all $\eps>0$, $\alpha\in[0,1)$ and $\gamma\ge0$.
    This completes the proof.
\end{proof}

\bl
Let $\alpha>\beta>0$. Then there is a constant $C=C(\alpha,\beta)>0$ such that for any $\eps>0$,
\begin{align}\label{sum_inter}
    \sum_{j=-1}2^{-\beta j}\left(1\wedge (2^{\alpha j}\eps)\right)\le C \eps^{\frac{\beta}{\alpha}}.
\end{align}
\el
\begin{proof}
It is easy to see that
 \begin{align*}
\sum_{j=-1}^\infty 2^{-\beta j}\left(1\wedge \left(\eps 2^{\alpha j}\right) \right)&\le\sum_{j=-1}^\infty 2^{\beta+1}\int_{2^{j}}^{2^{j+1}}s^{-\beta-1}(1\wedge (\eps s^\alpha))ds\lesssim \int_0^\infty s^{-\beta-1}\left(1\wedge\left(\eps s^\alpha\right)\right)\dif s\\
        &\lesssim\eps^{\frac\beta\alpha} \int_0^\infty s^{-\beta-1}(1\wedge s^\alpha)\dif s\lesssim \eps^{\frac\beta\alpha}.
    \end{align*}    
\end{proof}

\bl
Let $s\in(0,1)$ and $\bbp\in[1,\infty]^2$. There is a constant $C=C(d,s,\bbp)>0$ such that for all $h\in\mR^d$ and $f\in \bB^{3s}_{\bbp;\bba}$,
\begin{align}\label{xy-difference}
    \|f(\cdot+h,\cdot)-f(\cdot,\cdot)\|_{\mL^{\bbp}}\le C|h|^s\|f\|_{\bB^{3s}_{\bbp;\bba}}.
\end{align}
\el
\begin{proof}
    We note that for any $x,v\in\mR^d$,
    \begin{multline*}
        |f(x+h,v)-f(x,v)|\le \sum_{j=-1}^\infty|\cR_j^\bba f(x+h,v)-\cR_j^\bba f(x,v)|\\
        \le\sum_{j=-1}^\infty\left(|\cR_j^\bba f(x+h,v)|+|\cR_j^\bba f(x,v)|\right)\wedge \left(h\cdot\int_0^1\nabla_x\cR_j^\bba f(x+sh,v)d s \right).
       \end{multline*}
    Then it follows from Bernstein's inequalities (see \eqref{Ber}) that    
       \begin{align*}   
        \|f(\cdot+h,\cdot)-f(\cdot,\cdot)\|_{\mL^\bbp}&\lesssim \sum_{j=-1}^\infty\left(|h|\|\nabla_x\cR_j^\bba f\|_{\mL^\bbp}\right)\wedge \|\cR_j^\bba f\|_{\mL^\bbp}\\
        &\lesssim \sum_{j=-1}^\infty\left(\left(|h|2^{3j}\right)\wedge 1 \right)\|\cR_j^\bba f\|_{\mL^\bbp}\lesssim \sum_{j=-1}^\infty\left(\left(|h|2^{3j}\right)\wedge 1 \right)2^{-3sj} \|f\|_{\bB^{3s}_{\bbp;\bba}}\\
        &\lesssim|h|^{s}\|f\|_{\bB^{3s}_{\bbp;\bba}},
    \end{align*}
    provided by \eqref{sum_inter}, and the proof completes.
\end{proof}

    \begin{lemma}
        \label{lem:f-fn}
        Let $f$ be a measurable function defined on $\mR^{2d}$, define $f_n:=f\ast\varphi_n$ where $\varphi_n(x,v):=n^{4d\vartheta}\varphi(n^{3\vartheta}x, n^{\vartheta}v),$ $\vartheta\geq0$ and $\varphi$ is a probability density function on $\mR^{2d}$ with $\varphi(x,v)=\varphi(x,-v)$.  Then for any $s\in\mR$, $\beta\in(0,2)$, $\bbp\in[1,\infty)^2$,
         we have
         \begin{align}\label{est:f-fn-app}
             \|f-f_n\|_{\bB^{s,1}_{\bbp;\bba}}\lesssim n^{-\vartheta \beta} \|f\|_{\bB^{s+\beta}_{\bbp;\bba}}.
         \end{align}
    \end{lemma}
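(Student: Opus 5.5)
Since $\bB^{s,1}$ is defined through the dyadic blocks, I will work block by block. Because convolution commutes with Fourier multipliers, $\cR^\bba_j(f-f_n)=(\cR^\bba_j f)-(\cR^\bba_j f)*\varphi_n$. Set $\varepsilon:=n^{-\vartheta}$; then $\varphi_n$ is the anisotropic dilation $\varphi_n(z)=\varepsilon^{-4d}\varphi(\varepsilon^{-3}x,\varepsilon^{-1}v)$. The key blockwise claim I aim for is
\begin{align*}
\|\cR^\bba_j f-(\cR^\bba_j f)*\varphi_n\|_{\bbp}\lesssim \min\big(1,(2^{j}\varepsilon)^{2}\big)\,\|\tilde\cR^\bba_j f\|_{\bbp},
\end{align*}
where $\tilde \cR^\bba_j$ is a slightly enlarged block (for instance $\sum_{|i-j|\le1}\cR^\bba_i$).

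The bound by $1$ is immediate from Minkowski's inequality, since $\varphi_n$ is a probability density and mixed norms are translation invariant. For the bound by $(2^j\varepsilon)^2$, write $g=\cR^\bba_j f$ and
\begin{align*}
g-g*\varphi_n=-\int\big(g(x-x',v-v')-g(x,v)\big)\varphi_n(x',v')\dif x'\dif v'.
\end{align*}
Expand the $v$-increment to second order and use the symmetry $\varphi(x,v)=\varphi(x,-v)$ to kill the first-order $v$-term $v'\cdot\nabla_v g$. For the $x$-increment, use only the first-order bound $|x'|\,\|\nabla_x g\|_\bbp$, which is enough because $x$ has weight $3$: Bernstein's inequality \eqref{Ber} gives $\|\nabla_x g\|_\bbp\lesssim 2^{3j}\|g\|_\bbp$, and $|x'|\sim\varepsilon^3$ on the scale of $\varphi_n$, so this term is $(2^j\varepsilon)^3\lesssim(2^j\varepsilon)^2$ whenever $2^j\varepsilon\le1$. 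The $v$-term is bounded by $|v'|^2\|\nabla_v^2 g\|_\bbp\lesssim (\varepsilon 2^j)^2\|g\|_\bbp$. To separate the $x$- and $v$-shifts I will split $g(x-x',v-v')-g(x,v)$ as in the proof of \eqref{xy-difference}.

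This is the step I expect to be the main obstacle, because it needs the moments $\int|x'|\varphi_n$ and $\int|v'|^2\varphi_n$ to be finite. The natural fix is to assume, or reduce to, $\varphi$ having finite second $v$-moment and first $x$-moment, which holds for the compactly supported $\varphi$ used in the examples. Alternatively, one can trade moments for the rate and settle for some $\beta'<\beta$, since $\beta<2$ leaves room.

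With the blockwise claim in hand, multiply by $2^{js}$ and sum over $j$:
\begin{align*}
\|f-f_n\|_{\bB^{s,1}_{\bbp;\bba}}\lesssim\sum_j 2^{js}\min\big(1,(2^j\varepsilon)^2\big)2^{-j(s+\beta)}\|f\|_{\bB^{s+\beta}_{\bbp;\bba}}=\sum_j2^{-\beta j}\min\big(1,2^{2j}\varepsilon^2\big)\,\|f\|_{\bB^{s+\beta}_{\bbp;\bba}}.
\end{align*}
Applying \eqref{sum_inter} with $\alpha=2>\beta$ and $\varepsilon^2$ in place of $\eps$ bounds the sum by $(\varepsilon^{2})^{\beta/2}=n^{-\vartheta\beta}$. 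This gives \eqref{est:f-fn-app}.
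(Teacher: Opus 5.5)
Your proposal is correct and follows essentially the same route as the paper: it splits each block into an $x$-increment and a $v$-increment, uses the symmetry of $\varphi$ to turn the $v$-part into a second difference, applies Bernstein's inequality and the trivial bound $\|\cR^\bba_j(f-f_n)\|_\bbp\lesssim\|\cR^\bba_j f\|_\bbp$, and sums with \eqref{sum_inter} for $\alpha=2$; the only difference is that the paper treats the $x$-increment via \eqref{xy-difference} with exponent $2/3$, which needs only $\int|y|^{2/3}\varphi<\infty$ rather than your first $x$-moment. Your concern about moments is legitimate: the paper's proof silently uses $\int|w|^2\varphi<\infty$ and $\int|y|^{2/3}\varphi<\infty$, and without some such hypothesis the stated rate $n^{-\vartheta\beta}$ (for $\beta$ close to $2$) can fail for heavy-tailed $\varphi$.
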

\begin{proof}
    By \cref{bs}, $ \|f-f_n\|_{\bB^{s}_{\bbp;\bba}}=\sup_j2^{sj}   \|\cR_j^\bba(f-f_n)\|_{\bbp}$.
    Since $\varphi_n$ is a probability density function, 
    for any $(x,v)\in\mR^{2d}$, we have
    \begin{align*}
        \cR_j^\bba(f-f_n)(x,v)=&\int_{\mR^{2d}}\varphi_n(y,w)\big( \cR_j^\bba f(x,v)-\cR_j^\bba f(x-y,v-w)\big)\dif y\dif w\\
        =&\int_{\mR^{2d}}\varphi_n(y,w)\big( \cR_j^\bba f(x,v)-\cR_j^\bba f(x,v-w)\big)\dif y\dif w\\
        &+\int_{\mR^{2d}}\varphi_n(y,w)\big( \cR_j^\bba f(x,v-w)-\cR_j^\bba f(x-y,v-w)\big)\dif y\dif w\\
        =:&I_1(j)+I_2(j).
    \end{align*}
Using the symmetry $\varphi_n(x,v)=\varphi_n(x,-v)$ and Bernstein's inequalities (see \eqref{Ber}) one sees that 
\begin{align*}
        \| I_1(j)\|_\bbp=&\left\|\int_{\mR^{2d}}\varphi_n(y,w)\big(\cR_j^\bba f(x,v+w)+\cR_j^\bba f(x,v-w)-2\cR_j^af(x,v)\big)\dif y\dif w\right\|_\bbp\\
        \lesssim& \int_{\mR^{2d}}|\varphi_n(y,w)||w|^2\| \nabla^2\cR_j^\bba f\|_{\mL^\bbp}\dif y\dif w
        \lesssim 2^{2j}\| \cR_j^\bba f\|_{\mL^\bbp} n^{-2\vartheta}.
    \end{align*}
     In view of \eqref{xy-difference}, we have
 \begin{align*}
     \| I_2(j)\|_\bbp&\lesssim \int_{\mR^{2d}}|\varphi_n(y,w)||y|^\frac{2}{3}\| \cR_j^\bba f\|_{{\bB^{2}_{\bbp;\bba}}}\dif y\dif w\lesssim\| \cR_j^\bba f\|_{\bB^{2}_{\bbp;\bba}} n^{-2\vartheta}\\
     &\lesssim  \sup_k2^{2k}   \|\cR_k^a\cR_j^\bba f\|_{\bbp} n^{-2\vartheta}\lesssim \sup_{k\sim j}2^{2k}   \|\cR_j^\bba f\|_{\bbp} n^{-2\vartheta}\lesssim 2^{2j}   \|\cR_j^\bba f\|_{\bbp} n^{-2\vartheta}.
 \end{align*}
 Moreover, it is easy to see that
 \begin{align*}
     \|\cR_j^\bba(f-f_n)\|_{\bbp}\le \|\cR_j^\bba f\|_{\bbp}+\|\cR_j^\bba f_n\|_{\bbp}\lesssim \|\cR_j^\bba f\|_{\bbp}.
 \end{align*}
    Therefore, we get 
     \begin{align*}
      \|f-f_n\|_{\bB^{s,1}_{\bbp;\bba}}&=\sum_{j=-1}^\infty2^{sj}   \|\cR_j^\bba(f-f_n)\|_{\bbp}\lesssim \sum_{j=-1}^\infty 2^{sj} \|\cR_j^\bba f\|_{\bbp}\left(\left(2^{2j}n^{-2\vartheta}\right)\wedge 1\right)\\
      &\lesssim \sum_{j=-1}^\infty 2^{-\beta j}\left(\left(2^{2j}n^{-2\vartheta}\right)\wedge 1\right)\|f\|_{\bB^{s+\beta}_{\bbp;\bba}},
    \end{align*}
    which by \eqref{sum_inter} implies \eqref{est:f-fn-app}.
\end{proof}
    \bl\label{App:cutoff}
Let $N>1$ and $\bbp=(p_x,p_v)\in (1,\infty]^2$ such that $\bbp\neq (\infty,\infty)$. 
For any $\delta\in(0,((p_x\wedge p_v)-1)(\bba\cdot\frac{d}{\bbp}))$, 
there is a constant $C=C(d,\bbp,\delta)>0$ such that for any $f,g\in \mL^{\bbp}$ satisfying
$
|g|\le |f|1_{|f|>N}
$,
one has
\begin{align}\label{App:cutoff00}
\|g\|_{\bB^{-\delta,1}_{\bbp;\bba}}\le CN^{-\delta(\bba\cdot d/\bbp)^{-1}}\|f\|_{\mL^{\bbp}}^{1+\delta(\bba\cdot d/\bbp)^{-1}}.
\end{align}
\el
\begin{proof}

We consider three cases.


{\bf Case (i)} ($p_x=\infty$, $p_v\ne\infty$). 
Let $q\in[1,p_v)$ be such that $\delta+\frac{d}{p_v}=\frac{d}{q}$. Using embedding $\bB^{0,1}_{(\infty,q);\bba}\hookrightarrow\bB^{-\delta,1}_{\bbp;\bba}$ (\cite[Appendix B]{HRZ}) and \eqref{AB2}, we have
\begin{align*}
  \|g\|_{\bB^{-\delta,1}_{\bbp;\bba}}\lesssim \|g\|_{\bB^{0,1}_{(\infty,q);\bba}}\lesssim\|g\|_{\mL^{(\infty,q)}} \le\|f1_{|f|>N}\|_{\mL^{(\infty,q)}}.  
\end{align*}
Note that
\begin{align*}
    \|f(\cdot,v)1_{|f(\cdot,v)|>N}\|_{L^\infty}\le \|f(\cdot,v)\|_{L^\infty}1_{\{\|f(\cdot,v)\|_{L^\infty}>N\}}.
\end{align*}
Setting $h(v):=\|f(\cdot,v)\|_{L^\infty}$, we have
\begin{align*}
  \|f1_{|f|>N}\|_{\bB^{-\delta,1}_{\bbp;\bba}}&\lesssim  \|h1_{|h|>N}\|_{L^q}
  \le N^{-\frac{p_v-q}{q}}\|h\|_{L^{p_v}}^{\frac{p_v}{q}}
  =N^{-\frac{\delta p_v}{d}}\|f\|_{\mL^{\bbp}}^{\frac{p_v}{q}}.
\end{align*}


{\bf Case (ii)} ($p_v=\infty$). 
Let $q\in[1,p_x)$ be such that $\delta+\frac{3d}{p_x}=\frac{3d}{q}$.  Using embedding $\bB^{0,1}_{(q,\infty);\bba}\hookrightarrow\bB^{-\delta,1}_{\bbp;\bba}$ (\cite[Appendix B]{HRZ}) and \eqref{AB2}, we have
\begin{align*}
  \|g\|_{\bB^{-\delta,1}_{\bbp;\bba}}\lesssim \|g\|_{\bB^{0,1}_{(q,\infty);\bba}}\lesssim\|f1_{|f|>N}\|_{\mL^{(q,\infty)}}.  
\end{align*}
For each $v\in\mR^d$, we have
\begin{align*}
    \|f(\cdot,v)1_{|f(\cdot,v)|>N}\|_{L^{q}}
    \le N^{-\frac{\delta p_x}{3d}}\|f(\cdot,v)\|_{L^{p_x}}^{\frac{p_x}{q}}.
\end{align*}
This gives \eqref{App:cutoff00} upon taking supremum over $v$.

\vspace{1mm}

{\bf Case (iii)} ($p_x,p_v\ne\infty$). We put $ k=1+\delta(\bba\cdot\tfrac{d}{\bbp})^{-1}$. 
Since $\delta\in(0,((p_x\wedge p_v)-1)(\bba\cdot\frac{d}{\bbp}))$, we see that $\bbq:=(q_x,q_v)=(\frac{p_x}{k},\frac{p_v}{k})\in[1,\infty)^2$ and
\begin{align*}
    \delta+\tfrac{3d}{p_x}+\tfrac{d}{p_v}=\tfrac{3d}{q_x}+\tfrac{d}{q_v} .
\end{align*}
Then by embedding $\bB^{0,1}_{\bbq;\bba}\hookrightarrow\bB^{-\delta,1}_{\bbp;\bba}$ (\cite[Appendix B]{HRZ}) and \eqref{AB2}, we have
\begin{align*}
  \|g\|_{\bB^{-\delta,1}_{\bbp;\bba}}\lesssim \|g\|_{\bB^{0,1}_{\bbq;\bba}}\lesssim\|g\|_{\mL^\bbq}\lesssim\|f1_{|f|>N}\|_{\mL^\bbq}.  
\end{align*}
It is straightforward to see that
\begin{align*}
    \|f1_{|f|>N}\|_{\mL^\bbq}
    \le N^{-\frac{p_x-q_x}{q_x}}\left(\int_{\mR^d}\|f(\cdot,v)\|_{\mL^{p_x}}^{\frac{p_x q_v}{q_x}}\dif v\right)^{\frac{1}{q_v}}=
    N^{-\frac{p_x-q_x}{q_x}}\|f\|_{\mL^\bbp}^{\frac{p_v}{q_v}}.
\end{align*} 
Noting that $\tfrac{p_x-q_x}{q_x}=k-1=\delta(\bba\cdot\tfrac{d}{\bbp})^{-1}$, 
the proof completes.
\end{proof}

\section{Paraproduct estimates  in anisotropic Besov spaces}\label{sec:3.2}

Recall the Bony decomposition
\begin{align}\label{def:Bony}
    fg=f\prec g+f\circ g+f\succ g=:f\prec g+f\succcurlyeq g,
\end{align}
where
\begin{align*}
   f\prec g=g\succ f:=\sum_{k=1}^\infty S_{k-1}^\bba f\cR_k^\bba g,
   \quad S_k^\bba f:=\sum_{j=0}^k\cR_j^\bba f, \quad 
   f\circ g:=\sum_{|i-j|\le2} \cR_i^\bba f\cR_j^\bba g. 
\end{align*}
\begin{lemma}[Paraproduct estimates]\label{lem:A2}
   Let  $\bbp,\bbq,\bbr\in[1,\infty]^2$ with $1/\bbp+1/\bbq=1/\bbr, \alpha\in\mR$ and  $\beta<0.$ Then
\begin{align}\label{0426:09}
    \|f\prec g\|_{\bB^{\alpha}_{\bbr;\bba}}\lesssim \|f\|_{\bbq}\|g\|_{\bB^{\alpha}_{\bbp;\bba}}
\end{align}
and
\begin{align}\label{0426:08}
    \|f\succ g\|_{\bB^{\alpha+\beta}_{\bbr;\bba}}\lesssim \|f\|_{\bB^{\alpha}_{\bbp;\bba}}\|g\|_{\bB^{\beta}_{\bbq;\bba}}.
\end{align}
When $ \alpha+\beta>0,$ 
\begin{align}\label{0426:10}
    \|f\circ g\|_{\bB^{\alpha+\beta}_{\bbr;\bba}}\lesssim \|f\|_{\bB^{\alpha}_{\bbp;\bba}}\|g\|_{\bB^{\beta}_{\bbq;\bba}}.
\end{align}
Moreover, we have
\begin{gather}
    \label{0426:07}
    \|f\circ g\|_{\mL^\bbr}\lesssim \|f\|_{\bB^{\alpha,1}_{\bbp;\bba}}\|g\|_{\bB^{-\alpha}_{\bbq;\bba}},
   \\ \label{0426:07succ}
    \|f\succ g\|_{\mL^\bbr}\le \|f\|_{\bB^{-\beta,1}_{\bbp;\bba}}\|g\|_{\bB^{\beta,1}_{\bbq;\bba}},
  \\  \label{0426:07succ0}
    \|f\succ g\|_{\mL^\bbr}\le \|f\|_{\bB^{0,1}_{\bbp;\bba}}\|g\|_{\bbq}.
\end{gather}
Consequently, 
\begin{align}\label{est:Paraconse}
   \|f\succcurlyeq g\|_{\bbr}\lesssim \min(\|f\|_{\bB^{0,1}_{\bbq;\bba}}\|g\|_{\bbp},\|f\|_{\bB^{-\beta,1}_{\bbq;\bba}}\|g\|_{\bB^{\beta,1}_{\bbp;\bba}}).
\end{align}
\end{lemma}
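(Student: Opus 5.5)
We prove the paraproduct estimates by working block by block in the anisotropic Littlewood--Paley decomposition, in the standard way of Bahouri--Chemin--Danchin adapted to the dilation $2^{j\bba}$. The basic tools are: (a) the spectral localization properties, namely ${\rm supp}\,\widehat{S^\bba_{k-1}f\,\cR^\bba_k g}\subset\{|\xi|_\bba\sim 2^k\}$ and ${\rm supp}\,\widehat{\cR^\bba_i f\,\cR^\bba_j g}\subset B^\bba_{c2^{j}}$ for $|i-j|\le2$, which hold because $|\cdot|_\bba$ is a quasi-norm with $|\xi+\eta|_\bba\lesssim|\xi|_\bba+|\eta|_\bba$; (b) the mixed-norm H\"older inequality $\|FG\|_{\bbr}\le\|F\|_{\bbq}\|G\|_{\bbp}$ for $1/\bbr=1/\bbp+1/\bbq$, which follows by applying H\"older first in $x$ and then in $v$; (c) the uniform bounds $\|S^\bba_k f\|_{\bbq}\lesssim\|f\|_{\bbq}$ and $\|\cR^\bba_j F\|_{\bbr}\lesssim\|F\|_{\bbr}$, which hold because $\check\phi^\bba_j$ and $\check\chi^\bba_0(2^{-k\bba}\cdot)$ are $L^1$-normalized dilations of fixed Schwartz kernels, so Young's inequality in mixed norms applies.

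For \eqref{0426:09}: by (a), $\cR^\bba_j(f\prec g)=\sum_{|k-j|\le N_0}\cR^\bba_j(S^\bba_{k-1}f\,\cR^\bba_k g)$ for a fixed $N_0$. Then (b) and (c) give
\[
2^{j\alpha}\|\cR^\bba_j(f\prec g)\|_{\bbr}\lesssim\|f\|_{\bbq}\sum_{|k-j|\le N_0}2^{k\alpha}\|\cR^\bba_k g\|_{\bbp},
\]
and taking the supremum over $j$ gives the claim. The proof of \eqref{0426:08} is the same, except that we now bound $\|S^\bba_{k-1}g\|_{\bbq}\le\sum_{i<k}2^{-i\beta}2^{i\beta}\|\cR_i^\bba g\|_{\bbq}\lesssim 2^{-k\beta}\|g\|_{\bB^\beta_{\bbq;\bba}}$. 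This geometric sum converges precisely because $\beta<0$.

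For \eqref{0426:10}, each resonant block has spectrum only in a ball, so $\cR^\bba_j(f\circ g)=\sum_{i\ge j-N_0}\sum_{|i'-i|\le2}\cR^\bba_j(\cR^\bba_i f\,\cR^\bba_{i'}g)$. Using (b) and (c),
\[
\|\cR^\bba_j(f\circ g)\|_{\bbr}\lesssim\sum_{i\ge j-N_0}2^{-i(\alpha+\beta)}\|f\|_{\bB^\alpha_{\bbp;\bba}}\|g\|_{\bB^\beta_{\bbq;\bba}}\lesssim 2^{-j(\alpha+\beta)}\|f\|_{\bB^\alpha_{\bbp;\bba}}\|g\|_{\bB^\beta_{\bbq;\bba}},
\]
where the sum converges since $\alpha+\beta>0$.

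For the $\mL^\bbr$ bounds we do not localize the output at all; we apply the triangle inequality directly to the series. For \eqref{0426:07},
\[
\|f\circ g\|_{\bbr}\le\sum_{|i-j|\le2}\|\cR^\bba_i f\|_{\bbp}\|\cR^\bba_j g\|_{\bbq}\lesssim\sum_i2^{i\alpha}\|\cR_i^\bba f\|_{\bbp}\cdot\sup_j2^{-j\alpha}\|\cR^\bba_jg\|_{\bbq},
\]
which is the right-hand side with the $\ell^1$ norm on $f$. For \eqref{0426:07succ}, write $\|f\succ g\|_{\bbr}\le\sum_k\|S^\bba_{k-1}g\|_{\bbq}\|\cR^\bba_k f\|_{\bbp}$. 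Since $\beta<0$, the first step gives $\|S^\bba_{k-1}g\|_{\bbq}\le\sum_{i<k}\|\cR_i^\bba g\|_{\bbq}\le 2^{-k\beta}\|g\|_{\bB^{\beta,1}_{\bbq;\bba}}$, because $2^{-k\beta}\ge2^{-i\beta}$ for $i<k$. Summing against $\|\cR^\bba_k f\|_{\bbp}$ then yields $\|f\|_{\bB^{-\beta,1}_{\bbp;\bba}}\|g\|_{\bB^{\beta,1}_{\bbq;\bba}}$. For \eqref{0426:07succ0}, use instead $\|S^\bba_{k-1}g\|_{\bbq}\lesssim\|g\|_{\bbq}$ from (c) and sum $\|\cR^\bba_kf\|_{\bbp}$ to get $\|f\|_{\bB^{0,1}_{\bbp;\bba}}$; the inequality holds up to the constant from (c).

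Finally, \eqref{est:Paraconse} follows by adding the estimates for $f\circ g$ and $f\succ g$, with the roles of $\bbp$ and $\bbq$ matched to the statement. For the first alternative we use \eqref{0426:07} with $\alpha=0$ together with $\|g\|_{\bB^0_{\bbp;\bba}}\lesssim\|g\|_{\bbp}$, and \eqref{0426:07succ0}. For the second alternative we use \eqref{0426:07} with $\alpha=-\beta$ together with $\bB^{\beta,1}\hookrightarrow\bB^{\beta}$, and \eqref{0426:07succ}.

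The only genuinely delicate point is (a): the support claims in the anisotropic setting. These rest on the quasi-triangle inequality for $|\cdot|_\bba$, and the resulting constants determine $N_0$. Everything else is bookkeeping with geometric sums.
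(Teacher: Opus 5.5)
Your proposal is essentially the paper's proof. The paper defers \eqref{0426:09}--\eqref{0426:10} to the literature (citing \cite[Lemma 2.11]{HZZZ22}), where the proof is the block-by-block Littlewood--Paley argument you give, and it proves \eqref{0426:07}--\eqref{0426:07succ0}, and hence \eqref{est:Paraconse}, by exactly your H\"older-and-sum computations, including $\|S^\bba_{k-1}g\|_{\bbq}\le 2^{-k\beta}\|g\|_{\bB^{\beta,1}_{\bbq;\bba}}$.

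The one point to tighten is the lower bound in (a). The reverse triangle inequality $|\xi+\eta|_\bba\ge|\eta|_\bba-|\xi|_\bba$ and a choice of $N_0$ are not enough; you also need a frequency gap built into $\prec$. With the paper's literal indexing, the spectrum $\{|\xi|_\bba\le 2^k\}$ of $S^\bba_{k-1}f$ overlaps the spectrum $\{2^{k-1}\le|\xi|_\bba\le 2^{k+1}\}$ of $\cR^\bba_kg$. Then $\cR^\bba_j(f\prec g)$ collects all $k\ge j-N_0$, and $\sum_{k\ge j-N_0}2^{-k\alpha}$ diverges for $\alpha\le 0$, which includes the case $\alpha=-\delta$ used in \cref{lem:estIn2}. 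If you build $\prec$ with $S^\bba_{k-3}$ instead, each summand has spectrum in $\{2^{k-2}\le|\xi|_\bba\le 2^{k+2}\}$. This also makes $fg=f\prec g+f\circ g+f\succ g$ exact with $\circ=\sum_{|i-j|\le 2}$, and the rest of your argument goes through unchanged.
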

\begin{proof}
    The estimates \eqref{0426:09}, \eqref{0426:08} and \eqref{0426:10} are standard (see \cite[Lemma 2.11]{HZZZ22} for instance). We only show \eqref{0426:07}-\eqref{0426:07succ0}, which implies \eqref{est:Paraconse} through the embedding $\LL^\bbq\hookrightarrow \bB^{0}_{\bbq;\bba}$ (see \eqref{AB2}). 
    Concerning \eqref{0426:07}, by definition and H\"older inequality, we have that
    \begin{align*}
        \|f\circ g\|_{\bbr}\lesssim \sum_{i=0}^\infty\sum_{\ell=-2}^2\|\cR_{i}^\bba f\|_{\bbp}\|\cR_{i+\ell}^\bba g\|_{\bbq}\lesssim \sum_{i=0}^\infty2^{\alpha i}\|\cR_{i}^\bba f\|_{\bbp}\|g\|_{\bB^{-\alpha}_{\bbq;\bba}}\lesssim \|f\|_{\bB^{\alpha,1}_{\bbp;\bba}}\|g\|_{\bB^{-\alpha}_{\bbq;\bba}}.
    \end{align*}
    As for \eqref{0426:07succ0} and \eqref{0426:07succ}, applying H\"older inequality, we have
\begin{align*}
    \|f\succ g\|_{\mL^\bbr}\le\sum_{k=1}^\infty\|S^\bba_{k-1}g \cR^\bba_k f\|_\bbr\le\sum_{k=1}^\infty\|S^\bba_{k-1}g\|_{\bbq} \|\cR^\bba_k f\|_\bbp.
\end{align*}
 From \eqref{AA13}, we have $ \widehat{S^\bba_kg}(\xi)= \chi^\bba_0(2^{-k\bba}\xi)\hat{g}(\xi)$.
This implies, through  Young convolution inequality, that  $\|S^\bba_kg\|_{\bbp}  \le \|g\|_\bbp$.
Hence, we have 
\begin{align*}
    \|f\succ g\|_{\mL^\bbr}
    \le\sum_{k=1}^\infty\|g\|_{\bbq} \|\cR^\bba_k f\|_\bbp
    \le \|f\|_{\bB^{0,1}_{\bbp;\bba}}\|g\|_{\bbq},
\end{align*}
which shows \eqref{0426:07succ0}. Furthermore, since $\beta<0$, we have $\|S^\bba_kg\|_{\bbp}  \le 2^{-\beta k} \|g\|_{\bB^{\beta,1}_{\bbq;\bba}}$.
This yields that
    \begin{align*}
    \|f\succ g\|_{\mL^\bbr}
    \le \sum_{k=1}^\infty2^{-\beta k} \|\cR^\bba_k f\|_\bbp\|g\|_{\bB^{\beta,1}_{\bbq;\bba}} \le \|f\|_{\bB^{-\beta,1}_{\bbp;\bba}}\|g\|_{\bB^{\beta,1}_{\bbq;\bba}},
\end{align*}
which shows \eqref{0426:07succ}.
    \end{proof}
   
\section*{Acknowledgment}
KL has been funded by the Engineering \& Physical Sciences Research Council (EPSRC) Grant EP/Y016955/1.  CL is supported by Deutsche Forschungsgemeinschaft (DFG) - Projektnummer 563883019.
We are thankful to Prof. Fengyu Wang (Tianjin University, China) who suggested this question to us.
\bibliographystyle{alpha}
\bibliography{references}

@article {OJ,
    AUTHOR = {Bencheikh, O. and Jourdain, B.},
     TITLE = {Convergence in total variation of the {E}uler-{M}aruyama
              scheme applied to diffusion processes with measurable drift
              coefficient and additive noise},
   JOURNAL = {SIAM J. Numer. Anal.},
  FJOURNAL = {SIAM Journal on Numerical Analysis},
    VOLUME = {60},
      YEAR = {2022},
    NUMBER = {4},
     PAGES = {1701--1740},
      ISSN = {0036-1429,1095-7170},
   MRCLASS = {60H35 (60H10 65C05 65C30)},
  MRNUMBER = {4451313},
MRREVIEWER = {Sa\'ul\ D\'iaz-Infante},
       DOI = {10.1137/20M1371774},
       URL = {https://doi.org/10.1137/20M1371774},
}

@article{talay1990expansion,
  title={Expansion of the global error for numerical schemes solving stochastic differential equations},
  author={Talay, D. and Tubaro, L.},
  journal={Stochastic analysis and applications},
  volume={8},
  number={4},
  pages={483--509},
  year={1990},
  publisher={Taylor \& Francis}
}

@article{B,
  author = {F. Bouchut},
  title = {{Hypoelliptic regularity in kinetic equations}},
  journal = {J. Math Pures Appl.},
  volume = {81},
  pages = {1135--1159},
  year = {2002},
  doi = {10.1016/S0021-7824(02)01255-1}
}

@article{R,
  author = {P.-E. Chaudru de Raynal},
  title = {{Strong existence and uniqueness for degenerate SDE with H\"oder drift}},
  journal = {Ann. Inst. Henri Poincaré Probab. Stat.},
  volume = {53},
  pages = {259--286},
  year = {2017},
  doi = {10.1214/16-AIHP721}
}

@article{RM,
  author = {P. Chaudru De Raynal and S. Menozzi},
  title = {{Regularization effects of a noise propagating through a chain of differential equations: an almost sharp result}},
  journal = {Trans. Amer. Math. Soc.},
  volume = {375},
  pages = {1-45},
  year = {2022},
  doi = {10.1090/tran/8576}
}

@article{HZZZ22,
  author = {Z. Hao and X. Zhang and R. Zhu and X. Zhu},
  title = {{Singular kinetic equations}},
  journal = {Ann. Probab.},
  volume = {52},
  pages = {576--657},
  year = {2024},
  doi = {10.1214/22-AOP1551}
}

@article {HRZ,
    AUTHOR = {Hao, Z. and R\"ockner, M. and Zhang, X.},
     TITLE = {Second-order fractional mean-field {SDE}s with singular
              kernels and measure initial data},
   JOURNAL = {Ann. Probab.},
  FJOURNAL = {The Annals of Probability},
    VOLUME = {54},
      YEAR = {2026},
    NUMBER = {1},
     PAGES = {1--62},
      ISSN = {0091-1798,2168-894X},
   MRCLASS = {60H10 (35R11 35R60)},
  MRNUMBER = {5019007},
       DOI = {10.1214/24-AOP1709},
       URL = {https://doi.org/10.1214/24-AOP1709},
}

@article{HJK,
  author = {M. Hutzenthaler and A. Jentzen and P. Kloeden},
  title = {{Strong convergence of an explicit numerical method for SDEs with nonglobally Lipschitz continuous coefficients}},
  journal = {Ann. Appl. Probab.},
  volume = {22},
  pages = {1611--1641},
  year = {2012},
  doi = {10.1214/11-AAP803}
}

@article{Holland,
  author = {T. Holland},
  title = {{A note on the weak rate of convergence for the Euler-Maruyama scheme with H\"older drift}},
  journal = {Stoch. Proc. Appl.},
  volume = {174},
  year = {2024},
  doi = {10.1016/j.spa.2023.10.001}
}

@article{H,
  author = {L. H\"ormander},
  title = {{Hypoelliptic second order differential equations}},
  journal = {Acta Math.},
  volume = {119},
  pages = {147--171},
  year = {1967},
  doi = {10.1007/BF02392081}
}

@article{JM21,
  author = {B. Jourdain and S. Menozzi},
  title = {{Convergence Rate of the Euler-Maruyama Scheme Applied to Diffusion Processes with $L^q-L^\rho$ Drift Coeffcient and Additive Noise}},
  journal = {Ann. Appl. Probab.},
  volume = {34},
  pages = {1663--1697},
  year = {2024},
  doi = {10.1214/23-AAP1876}
}

@article{LM,
  author = {V. Lemaire and S. Menozzi},
  title = {{On some Non Asymptotic Bounds for the Euler Scheme}},
  journal = {Electron. J. Probab.},
  volume = {15},
  pages = {1645--1681},
  year = {2010},
  doi = {10.1214/EJP.v15-822}
}

@article{Le2022,
  author = {K. L{\^e}},
  title = {{Quantitative John--Nirenberg inequality for stochastic processes of bounded mean oscillation}},
  journal = {arXiv preprint},
  year = {2022},
  eprint = {https://arxiv.org/pdf/2210.15736}
}

@article {LL,
    AUTHOR = {L\^e, K. and Ling, C.},
     TITLE = {Taming singular stochastic differential equations: a numerical
              method},
   JOURNAL = {Ann. Probab.},
  FJOURNAL = {The Annals of Probability},
    VOLUME = {53},
      YEAR = {2025},
    NUMBER = {5},
     PAGES = {1764--1824},
      ISSN = {0091-1798,2168-894X},
   MRCLASS = {60H35 (35R60 60H10 60H50 60L90)},
  MRNUMBER = {4962731},
       DOI = {10.1214/24-aop1750},
       URL = {https://doi.org/10.1214/24-aop1750},
}

@article {RZ24,
    AUTHOR = {Ren, C. and Zhang, X.},
     TITLE = {Heat kernel estimates for kinetic {SDE}s with drifts being
              unbounded and in {K}ato's class},
   JOURNAL = {Bernoulli},
  FJOURNAL = {Bernoulli. Official Journal of the Bernoulli Society for
              Mathematical Statistics and Probability},
    VOLUME = {31},
      YEAR = {2025},
    NUMBER = {2},
     PAGES = {1402--1427},
      ISSN = {1350-7265,1573-9759},
   MRCLASS = {60H10},
  MRNUMBER = {4863081},
MRREVIEWER = {Feng-Yu\ Wang},
       DOI = {10.3150/24-bej1775},
       URL = {https://doi.org/10.3150/24-bej1775},
}

@article{RE,
  author = {L. Rothschild and E. Stein},
  title = {{Hypoelliptic differential operators and nilpotent groups}},
  journal = {Acta Math.},
  volume = {137},
  pages = {247--320},
  year = {1976},
  doi = {10.1007/BF02392360}
}

@book {S,
    AUTHOR = {Soize, C.},
     TITLE = {The {F}okker-{P}lanck equation for stochastic dynamical
              systems and its explicit steady state solutions},
    SERIES = {Series on Advances in Mathematics for Applied Sciences},
    VOLUME = {17},
 PUBLISHER = {World Scientific Publishing Co., Inc., River Edge, NJ},
      YEAR = {1994},
     PAGES = {xvi+321},
      ISBN = {981-02-1755-2},
   MRCLASS = {60J60 (34F05 35R60 70L05 73K35 82C31 93E03)},
  MRNUMBER = {1287386},
MRREVIEWER = {Wolfgang Kliemann},
       DOI = {10.1142/9789814354110},
       URL = {https://doi.org/10.1142/9789814354110},
}

@book{Tri06,
  author = {H. Triebel},
  title = {{Theory of function spaces, III}},
  publisher = {Birkh\"{a}user},
  address = {Basel},
  year = {2006},
  doi = {10.1007/978-3-0348-5255-8}
}

@article{T,
  author = {D. Talay},
  title = {{Stochastic Hamiltonian Systems: Exponential Convergence to the Invariant Measure, and Discretization by the Implicit Euler Scheme}},
  journal = {Markov Processes Relat. Fields},
  volume = {8},
  pages = {1--36},
  year = {2002}
}

@article{V,
  author = {C. Villani},
  title = {{A review of mathematical topics in collisional kinetic theory}},
  journal = {{Handbook of mathematical fluid dynamics}},
  year = {2002}
}

@article{Zhang10,
  author = {X. Zhang},
  title = {{Stochastic Volterra equations in Banach spaces and stochastic partial differential equations}},
  journal = {J. Fun. Anal.},
  volume = {258},
  pages = {1361--1425},
  year = {2010},
  doi = {10.1016/j.jfa.2009.10.014}
}

@article{ZZ21,
  author = {X. Zhang and X. Zhang},
  title = {{Cauchy problem of stochastic kinetic equations}},
  journal = {Ann. Appl. Probab.},
  volume = {34},
  pages = {148--202},
  year = {2024},
  doi = {10.1214/23-AAP1876}
}

\end{document}